\title{Deep neural networks for inverse problems \\with pseudodifferential operators:\\ 
An application to limited-angle tomography
\thanks{Corresponding author: Luca Ratti (luca.ratti@helsinki.fi). \newline
\textbf{Funding: }{TAB, ML, LR and SS were supported by the Finnish Centre of Excellence in Inverse Modelling and Imaging, 2018-2025, decision number 312339, Academy of Finland grants 284715, 312110, 310822 and Faculty of Science ATMATH project.
MG and MP acknowledge support by INdAM-GNCS (Research Projects 2018). MG is also supported by INdAM Doctoral Programme in Mathematics and/or Applications Cofunded by Marie Sklodowska-Curie Actions (INdAM-DP-COFUND-2015), grant number 713485. MP is partially funded by the ECSEL JU programme under the PRYSTINE Project Grant Agreement No. 783190.}}}
\author{Tatiana A. Bubba\thanks{Dept.~of Maths and Stats, University of Helsinki, Helsinki, FI-00014 
  (tatiana.bubba@helsinki.fi, luca.ratti@helsinki.fi, matti.lassas@helsinki.fi, samuli.siltanen@helsinki.fi).}
\and Mathilde Galinier\thanks{Dept.~of Physics, Comp.~Sci.~and Maths, University of Modena and Reggio Emilia, Modena, IT-41125
  (mathildeemmanuelle.galinier@unimore.it, marco.prato@unimore.it)}
\and Matti Lassas$^\dagger$
\and Marco Prato$^\ddagger$
\and Luca Ratti$^\dagger$
\and Samuli Siltanen$^\dagger$
}
\crefname{hypothesis}{Hypothesis}{Hypotheses}
\newtheorem{theorem}{Theorem}[section]
\newtheorem{lemma}[theorem]{Lemma}
\newtheorem{proposition}[theorem]{Proposition}
\newtheorem{rem}[theorem]{Remark}
\numberwithin{equation}{section}
\newcommand{\diam}{\operatorname{diam}}
\newcommand{\layer}{\Lambda}
\newcommand{\N}{\mathbb{N}}
\newcommand{\R}{\mathbb{R}}
\newcommand{\Span}{\operatorname{span}}
\newcommand{\supp}{\operatorname{supp}}
\def\hat{\widehat}
\def\tilde{\widetilde}
\def \bfo {\begin {eqnarray*} }
\def \efo {\end {eqnarray*} }
\def \ba {\begin {eqnarray*} }
\def \ea {\end {eqnarray*} }
\def \beq {\begin {eqnarray}}
\def \eeq {\end {eqnarray}}
\def \supp {\hbox{supp }}
\def \diam {\hbox{diam }}
\newcommand{\wt}[1]{w_\rho^{(#1)}}
\def\vN{{p,q}}
\DeclareMathOperator*{\argmin}{arg\,min}
\begin{document}

\maketitle

\begin{abstract}
We propose a novel convolutional neural network (CNN), called $\Psi$DONet, designed for learning pseudodifferential operators ($\Psi$DOs) in the context of linear inverse problems. Our starting point is the Iterative Soft Thresholding Algorithm (ISTA), a well-known algorithm to solve sparsity-promoting minimization problems. We show that, under rather general assumptions on the forward operator, the unfolded iterations of ISTA can be interpreted as the successive layers of a CNN, which in turn provides fairly general network architectures that, for a specific choice of the parameters involved, allow to 
reproduce ISTA, or a perturbation of ISTA for which we can bound the coefficients of the filters. 
Our case study is the limited-angle X-ray transform and its application to limited-angle computed tomography (LA-CT). In particular, we prove that, in the case of LA-CT, the operations of upscaling, downscaling and convolution, which characterize our $\Psi$DONet and most deep learning schemes, can be exactly determined by combining the convolutional nature of the limited angle X-ray transform and basic properties defining an orthogonal wavelet system. We test two different implementations of $\Psi$DONet on simulated data from limited angle geometry, generated from the ellipse data set. Both  implementations  provide  equally  good  and  noteworthy preliminary results, showing the potential of the approach we propose and paving the way to applying the same idea to other convolutional operators which are $\Psi$DOs or Fourier integral operators. \par \medskip
\textbf{Keywords:} X-ray transform, limited angle tomography, deep neural networks, convolutional neural networks, wavelets, sparse regularization, Fourier integral operators, pseudodifferential operators, microlocal \mbox{analysis} \par \medskip
\textbf{AMS subject classifications: } 44A12, 68T07, 35S30, 58J40, 92C55
\end{abstract}

\section{Introduction}
\label{sec:introduction}
In the context of microlocal analysis, the theory of pseudodifferential operators ($\Psi$DOs), introduced by Kohn and Nirenberg in 1965, and Fourier integral operators (FIOs), defined by H\"{o}rmander in 1971, finds remarkable applications in many fields of Mathematics, from spectral theory to general relativity, from the study of the behavior of chaotic systems to scattering  theory and inverse problems~\cite{Hormander85}.

A prominent example in the inverse problem field is given by the X-ray transform or, in the two-dimensional case, Radon transform: 
\begin{equation}
R(u)(s,\omega) \, = \, \int_{-\infty}^{\infty} u(s\omega^{\perp} + t\omega) \, \text{d}t 
\qquad s \in \R, \; \omega, \omega^{\perp} \in S^1
\label{eq:RadonTr}
\end{equation}
where $\omega^{\perp}$ denotes the vector in the unit sphere $S^1$ obtained by rotating $\omega$ counterclockwise by $90^{\circ}$~\cite{natterer2001}. It is possible to show (see, \textit{e.g.}, \cite{Quinto06}) that the normal operator $R^*R$ of the Radon transform $R$ is an elliptic $\Psi$DO of order $-1$ and a convolutional operator associated with the Calder\'{o}n-Zygmund kernel $K(x,y) \, = \, \frac{1}{|x-y|}$ for $x\not=y$.
When the direction vector $\omega$ is restricted within a limited angular range $[-\Gamma, \Gamma]$, the normal operator $R_{\Gamma}^* R_{\Gamma}$ of the limited angle Radon transform $R_{\Gamma}$ is a convolutional operator associated with the kernel
\[
K(x,y) \, = \, \frac{1}{|x-y|} \, \chi_{\Gamma}(x-y)
\qquad \text{for } \; x\not=y,
\]
where $\chi_{\Gamma}$ denotes the indicator function of the cone in $\R^2$ between the angles $-\Gamma$ and $\Gamma$. The operator $R_{\Gamma}^* R_{\Gamma}$ is no longer a $\Psi$DO, but it  belongs to the wider class of FIOs, which includes operators associated with a kernel showing some discontinuities along lines~\cite{Hormander85}.

The inverse problem arising from the limited angle Radon transform, \textit{i.e.}, limited-angle computed tomography (LA-CT), appears frequently in practical applications, such as dental tomography~\cite{Kolehmainen03}, damage detection in concrete structures~\cite{heiskanen1991}, breast tomosynthesis~\cite{Zhang06} or electron tomography~\cite{Fanelli08}. In this framework, microlocal analysis is used to predict which singularities, that is, sharp features of the object being imaged, can be reconstructed in a stable way from limited angle measurements~\cite{Borg2018,Frikel13a,Krishnan2015,Quinto93}. In practice, thanks to microlocal analysis, we are able to read the part of the wavefront set of the target corresponding to the missing angular range from the measurement geometry. 

Even with this fundamental information, the task of robustly recovering the unknown quantity of interest from such partial indirect measurement is a challenging one, due to the ill-posedness 
of the CT problem, which is even more severe because of the limited angular range~\cite{Davison83}. 
As a result, classical methods, such as the filtered backprojection (FBP)~\cite{natterer2001}, yield poor performances. Traditional inversion methods of the form~\eqref{eq:minu}-\eqref{eq:minN}, based on complementing the insufficient measurements by imposing \textit{a priori} information on the solution, define effective regularization methods which generally allow for accurate reconstructions from fewer tomographic measurements than usually required by standard methods like FBP. 
In more recent years, machine learning approaches, in particular, deep learning, with convolutional neural networks (CNNs) being the most prominent design in the context of imaging, are increasingly impacting the field of inverse problems~\cite{Arridge19}, and (LA-)CT is no exception (see, in particular, \cite[section 4]{Arridge19} for an overview of learning approaches from a functional analytic regularization perspective and \cite[section 7]{Arridge19} for their applicability to prototypical examples of inverse problems, including CT). 
The majority of recent data-driven approaches for LA-CT focuses on recovering or inpainting the missing part of the wavefront set from the measured data (see, \textit{e.g.}, \cite{Bubba19,Tovey19} and the references therein for a thorough review of model-based and data-driven approaches based on sparsifying transforms and edge-preserving regularizers in the context of LA-CT).

In this paper, we are \textit{not} interested in designing an(other) approach for inferring the missing wedge in LA-CT, but rather we aim at investigating neural networks inspired by FIOs and $\Psi$DOs, for which LA-CT is a case study. 
Our starting point is the traditional sparsity-based minimization problem of the form~\eqref{eq:minu}-\eqref{eq:minN}. A well-known technique for its solution is the Iterative Soft Thresholding Algorithm (ISTA), introduced in 2004 in the seminal paper by Daubechies, Defrise and De Mol~\cite{D}. The convergence result in the paper relies on the assumption that the sparsifying system forms an orthogonal basis, as it is the case for many families of wavelets~\cite{mallat1999wavelet}. ISTA iteratively creates the sequence $\{w^{(n)}\}_{n=1}^N$ as follows:
\begin{equation}
 \label{eq:ISTAitintro}   
    w^{(n)} \, = \, \mathcal{S}_{\lambda/L}\left(w^{(n-1)} - \frac{1}{L} K^{(n)} w^{(n-1)} + \frac{1}{L} b^{(n)} \right),
\end{equation}
where, in our case, $K^{(n)}=W R_{\Gamma}^* R_{\Gamma} W^*$, with $W$ wavelet transform associated with an orthogonal family, $b^{(n)}=W R_{\Gamma}^* m$ and $S_\beta(w)$ is the (component-wise) soft-thresholding operator (see equations~\eqref{eq:ISTA} and~\eqref{eq:RESnet} for all the details). It is well-known that the unrolled iterations of ISTA can be considered as the layers of a neural network. Learned ISTA (LISTA), introduced in~\cite{Gregor}, and ISTA-Net, introduced in~\cite{Zhang18}, are examples of neural networks obtained by laying out the operations of ISTA for a few iterations. The major difference with our approach is that LISTA and ISTA-Net are not CNNs. In~\cite{Jin17} the authors investigate the relationship between CNNs and iterative optimization methods, including ISTA, for the case of normal operators associated with a forward model which is a convolution. However, the resulting U-net, FBPConvNet, does not aim at imitating an unrolled version of an iterative method, which makes it fundamentally different in spirit to the methodology we propose.
Indeed, the goal of our work is to show that, under some assumptions on the operator $R_{\Gamma}W^*$, it is possible to interpret the operations in~\eqref{eq:ISTAitintro} as a layer of a CNN, which in turn provides fairly general network architectures that allow to recover standard ISTA for a specific choice of the parameters involved.

Motivated by this, we propose a new CNN, which we name \textbf{$\Psi$DONet}, aimed at learning convolutional FIOs and $\Psi$DOs. 
The key feature of $\Psi$DONet is that we split the convolutional kernel into $K=K_0+K_1$ where $K_0$ is the known part of the model (in the limited angle case, $K_0=R_{\Gamma}^* R_{\Gamma}$) and $K_1$ is an unknown $\Psi$DO to be determined or, better, to be learned. Basically, in $K_1$ lays the potential to add information in the reconstruction process with respect to the known part of the model $K_0$. $\Psi$DONet takes advantage of the possibility to use small filters encoding a combination of upscaling, downscaling and convolution operations, as it is common practice in deep learning. Remarkably, we prove that such operations can be exactly determined combining the convolutional nature of the limited angle Radon transform and basic properties defining an orthogonal wavelet system. While this might seem contrary to the machine learning philosophy which finds its strength in avoiding any predefined structure for neural networks, our recipe gives insight into understanding and interpreting the results of the proposed CNN, combining results from FIOs, $\Psi$DOs and classical variational regularization theory.
At the same time, the possibility to deploy such operations allows for a significant reduction of the parameters involved, especially when compared to the standard interpretation of ISTA as a recurrent neural network: this is fundamental when it comes to a practical numerical implementation of the proposed CNN. Overall, $\Psi$DONet is able to reproduce ISTA, or a perturbation of ISTA for which we can bound the coefficients of the filters, and has the potential to learn $\Psi$DO-like structures which are intrinsic to the problem at hand. 

As a proof of concept, we test $\Psi$DONet on simulated data from limited angle geometry, generated from the ellipse data set. We provide two different implementations of $\Psi$DONet: Filter-Based $\Psi$DONet ($\Psi$DONet-F), where the backprojection opererator is approximated by its filter-equivalent, and Operator-Based $\Psi$DONet ($\Psi$DONet-O), where the backprojection opererator encoded in $K_0$ is not approximated but explicitly computed. Both implementations provide equally good and noteworthy preliminary results, the main difference being  a greater computational efficiency for $\Psi$DONet-O. The improvement provided by our results, compared to standard ISTA (and classical FBP), bodes well for further numerical testing which we leave to future work. 

Finally, we stress that the contribution of our paper is mainly theoretical and is in line with current research in data-driven inversion, which combines knowledge from traditional inverse problems theory with data-driven techniques.
While in our paper we derived the result contingently to the case of limited angle Radon transform, our approach is actually very general and can be extended to any convolutional operator which is a FIO or $\Psi$DO. This is the case, for instance, of the geodesic X-ray transform~\cite{Uhlmann16}, and its applications in seismic imaging, or synthetic-aperture radar (SAR)~\cite{Oliver89}.
Finally, our paper paves the way to theoretical generalization results, in light of recent contributions like~\cite{de2019deep}.

The remainder of this paper is organized as follows: \cref{sec:theo_background} is devoted to reviewing the theoretical background of sparsity promoting regularization and the wavelet transform. In \cref{sec:unrolled_ista_theo}, we detail the key idea of our approach, namely we give a convolutional interpretation of ISTA using the wavelet transform. The neural network architecture we propose, $\Psi$DONet, is introduced in \cref{sec:algo_theo}, where we also prove our main theoretical result. Two different implementations of $\Psi$DONet, which we call Filter-Based $\Psi$DONet and Operator-Based $\Psi$DONet, are described in \cref{sec:in_practice}. Finally, we demonstrate the performance of our network by a series of numerical experiments (see \cref{sec:experiments}). Concluding remarks and future prospects are briefly summarized in \cref{sec:conclusions}. The appendices collect proofs of some of the results presented in \cref{sec:theo_background}.

\section{Theoretical background}
\label{sec:theo_background}
In this section, we collect some theoretical results which are preliminary to the main discussion of the paper. 

\subsection{Sparsity-promoting regularization via ISTA}
\label{subsec:ista_standard}

Consider the inverse problem of determining $u^\dag \in X$ from the measurements $m = A u^\dag + \epsilon$, being $A: X \rightarrow Y$ a linear bounded operator between the Hilbert spaces $X$ and $Y$. The perturbation $\epsilon \in Y$ is such that $\|\epsilon\|_Y \leq \delta$. \\
The main application we have in mind is the limited-angle Radon transform $R_\Gamma$, which is a continuous linear operator, \textit{e.g.}, from $X = L^2(\Omega)$ (being $\Omega \subset \R^2$) to $Y = L^2([-\Gamma,\Gamma]\times[-S,S])$ (see \cite[Theorem 2.10]{natterer2001}). \\
Introduce an orthonormal basis $\{ \psi_I \}_{I \in \N}$ in $X$. For later purposes, we will assume that such basis is a wavelet system.
Define $W: X \rightarrow \ell^2(\N)$ the operator associating to any $u \in X$ the sequence of its component with respect to the wavelet basis: $(Wu)_I = (u,\psi_I)_X$, where $(\cdot,\cdot)_X$ denotes the inner product in $X$.
We assume to know \textit{a priori} that the exact solution $u^\dag$ is sparse with respect to the wavelet basis ${\psi_I}$ :
\begin{equation}
    Wu^\dag = w^\dag \in \ell^0(\N).
    \label{eq:sparse}
\end{equation}
\par
The reconstruction of $u^\dag$ (or, equivalently, $w^\dag$) from the noisy measurements $m$ is in general an ill-posed problem, hence we introduce the following regularized problem:
\begin{equation}
    \label{eq:min}
\min_{w\in \ell^1(\N)} \| A W^* w - m\|_{Y}^2+\lambda \|w\|_{\ell^1},
\end{equation}
being $\lambda > 0$. The requirement $w \in \ell^1(\N)$ is in general not satisfied by any $w = Wu$, $u \in X$; hence, we define $Z \subset X$, $Z = \{u \in X: Wu \in \ell^1(\N)\}$. In particular, in the tomography application, it is possible to show that the $\ell^1$ norm of the components of the wavelet representation of a $L^2(\Omega)$ function is equivalent to the Besov norm $B^1_{1,1}(\Omega)$ (see, \textit{e.g.}, \cite[formula (A3)]{D}).
Hence, the minimization problem \eqref{eq:min} is equivalent to 
\begin{equation}
    \label{eq:minu}
\min_{u \in Z} \|Au - m\|_{Y}^2+\lambda \|u\|_{Z}.
\end{equation}
It is well known that the regularization term involving the $\ell^1$ norm is a good choice to encode the \textit{a priori} information regarding the sparsity of $w^\dag$. In particular if the noise level tends to $0$, there exists a suitable choice of $\lambda = \lambda(\delta)$ ensuring the convergence of $w_\lambda^\delta$ to $w^\dag$, being $w_\lambda^\delta$ the solution of \eqref{eq:min}. We report a result from \cite{flemming2018injectivity} which also shows that such convergence occurs with linear rate. In particular, \cite[Corollary 2]{flemming2018injectivity} does not require $w^\dag$ to satisfy a classical source condition, but relies on the sparsity assumption \eqref{eq:sparse} and on the injectivity of the operator $A$. Such property can be restrictive in some applications, and as a consequence many alternative results involve some weaker assumptions (as the well known Restricted Isometry Property); nevertheless, in our tomographic application, we can rely on the injectivity of the Radon transform, even in the limited angle case.
\begin{proposition}
Let $w^\dag$ satisfy \eqref{eq:sparse}, and suppose $A:X \rightarrow Y$ is injective. Define $w_\delta^\lambda$ a solution of problem \eqref{eq:min} associated with a regularization parameter $\lambda$ and a noise level $\delta$. For sufficiently small $\delta$, provided that $\lambda$ is chosen such that $\lambda = c_0 \delta$, then there exists a positive constant $c_1 = c_1(c_0 ,A, \| w^{\dag} \|_{\ell^0})$ such that
\begin{equation}
\| w^\dag - w_\delta^\lambda\|_{\ell^1} \leq c_1 \delta.
\label{eq:conv1}
\end{equation} 
\label{prop:conv1}
\end{proposition}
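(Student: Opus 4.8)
The plan is to reduce Proposition \ref{prop:conv1} to a source-condition-free convergence rate result for $\ell^1$-regularization, such as the one in \cite[Corollary 2]{flemming2018injectivity}, by verifying its hypotheses in the present setting. First I would recall the variational source condition machinery: for $\ell^1$-regularization, linear convergence rates of the type \eqref{eq:conv1} follow once one establishes a variational inequality of the form
\begin{equation}
\beta \| w - w^\dag \|_{\ell^1} \leq \| w \|_{\ell^1} - \| w^\dag \|_{\ell^1} + \| A W^* (w - w^\dag) \|_Y \qquad \text{for all } w \in \ell^1(\N),
\label{eq:vsc_plan}
\end{equation}
with some $\beta \in (0,1]$. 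The key point, exploited in \cite{flemming2018injectivity}, is that when $w^\dag \in \ell^0(\N)$ has finite support $\mathrm{supp}(w^\dag) = S$ with $|S| = \|w^\dag\|_{\ell^0}$, and $A W^*$ is injective, such an inequality holds automatically: on the finite-dimensional subspace spanned by $\{e_I : I \in S\}$ injectivity gives a lower bound $\|A W^* \eta\|_Y \geq \gamma \|\eta\|_{\ell^1}$, while on the complement the $\ell^1$-norm is itself the subdifferential-based surrogate. Combining these on a splitting $w - w^\dag = P_S(w-w^\dag) + P_{S^\perp}(w - w^\dag)$ yields \eqref{eq:vsc_plan} with $\beta$ depending only on $\gamma$ (hence on $A$, $W$, and $S$), which is exactly the claimed dependence $c_1 = c_1(c_0, A, \|w^\dag\|_{\ell^0})$.

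Next I would feed \eqref{eq:vsc_plan} into the standard argument. Let $w_\delta^\lambda$ minimize \eqref{eq:min}; comparing its objective value with that of $w^\dag$ (which is feasible since $w^\dag \in \ell^0 \subset \ell^1$) gives
\begin{equation}
\| A W^* w_\delta^\lambda - m \|_Y^2 + \lambda \| w_\delta^\lambda \|_{\ell^1} \leq \| A W^* w^\dag - m \|_Y^2 + \lambda \| w^\dag \|_{\ell^1} \leq \delta^2 + \lambda \| w^\dag \|_{\ell^1},
\end{equation}
using $\| A W^* w^\dag - m \|_Y = \| \epsilon \|_Y \leq \delta$. Rearranging, $\lambda ( \| w_\delta^\lambda \|_{\ell^1} - \| w^\dag \|_{\ell^1} ) \leq \delta^2$; inserting this and the triangle-inequality bound $\|A W^*(w_\delta^\lambda - w^\dag)\|_Y \leq \|A W^* w_\delta^\lambda - m\|_Y + \delta$ into \eqref{eq:vsc_plan} applied to $w = w_\delta^\lambda$, and then using Young's inequality on the quadratic term, produces a bound of the form $\beta \| w_\delta^\lambda - w^\dag \|_{\ell^1} \leq C_1 \delta + C_2 \delta^2 / \lambda + C_3 \lambda$. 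With the a priori choice $\lambda = c_0 \delta$ the right-hand side becomes $(C_1 + C_2/c_0 + C_3 c_0)\delta$, giving \eqref{eq:conv1} with $c_1 = \beta^{-1}(C_1 + C_2/c_0 + C_3 c_0)$, which manifestly depends only on $c_0$, $A$ (through $\beta$ and operator norms), and $\|w^\dag\|_{\ell^0}$.

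The main obstacle — and the step that does the real work — is establishing the variational inequality \eqref{eq:vsc_plan}, i.e.\ extracting a genuinely uniform lower bound $\|A W^* \eta\|_Y \geq \gamma \|\eta\|_{\ell^1}$ on the finite-dimensional coordinate subspace indexed by $S$. Abstract injectivity of $A$ on all of $X$ only gives injectivity of $A W^*$ as a map on $\ell^1$, not a stability estimate; one genuinely uses that $S$ is \emph{finite}, so that $A W^*$ restricted to $\mathrm{span}\{e_I : I \in S\}$ is an injective linear map between finite-dimensional spaces and therefore bounded below, with constant depending on $S$ (equivalently, on $\|w^\dag\|_{\ell^0}$ together with which indices are active). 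Making the dependence of $c_1$ on $\|w^\dag\|_{\ell^0}$ rather than on $S$ itself requires a small additional remark — e.g.\ that for wavelet/X-ray settings the relevant bound can be taken uniform over supports of a given cardinality, or simply absorbing the choice of $S$ into the constant as \cite{flemming2018injectivity} does. Since the proposition is quoted verbatim from \cite[Corollary 2]{flemming2018injectivity}, the cleanest route is to cite that result directly after checking its two hypotheses (sparsity \eqref{eq:sparse} and injectivity of $A$, hence of $A W^*$), and relegate the variational-inequality derivation to the cited reference.
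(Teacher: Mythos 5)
Your bottom line coincides with the paper's own proof: the paper disposes of \cref{prop:conv1} in one sentence, declaring it an immediate consequence of \cite[Corollary 2]{flemming2018injectivity}, so your closing recommendation to cite that result directly is exactly the route taken. Two remarks on the expanded sketch you supply around that citation. First, you list the hypotheses of the cited corollary as sparsity \eqref{eq:sparse} and injectivity, but there is a third one that the paper explicitly flags: $A$ (equivalently $AW^*:\ell^1\to Y$) must be weak*-to-weak continuous, which the paper secures via \cite[Lemma 2]{flemming2018injectivity}; this is not a formality, since $\ell^1$ is a dual space and the whole point of the reference is that injectivity \emph{together with} weak*-to-weak continuity replaces a source condition. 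Second, your heuristic for why the variational inequality holds is not quite right as stated: injectivity of $AW^*$ restricted to $\operatorname{span}\{e_I: I\in S\}$ gives a lower bound $\|AW^*\eta\|_Y\geq\gamma\|\eta\|_{\ell^1}$ only for $\eta$ supported on $S$, whereas the perturbation $w-w^\dag$ has components off $S$, so $\|AW^*P_S(w-w^\dag)\|_Y$ is not controlled by $\|AW^*(w-w^\dag)\|_Y$ without an additional argument. Closing that gap is precisely where the weak*-to-weak continuity enters in \cite{flemming2018injectivity} (one needs an estimate of the form $\|P_S u\|_{\ell^1}\leq \varepsilon\|u\|_{\ell^1}+C_\varepsilon\|AW^*u\|_Y$ for all $u$, not just for $u$ supported on $S$). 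Since you ultimately defer this derivation to the reference, the proof stands, but the sketch as written would not survive being promoted to a self-contained argument.
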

This proposition is an immediate consequence of \cite[Corollary 2]{flemming2018injectivity}, relying on \cite[Lemma 2]{flemming2018injectivity} to ensure that  $A$ is weak*-to-weak continuous.
From now on, we suppose that $\lambda$ is chosen as a linear function of $\delta$ and denote $u_\delta^\lambda$ as $u_\delta$ and $w_\delta^\lambda$ as $w_\delta$. 
\par
We now introduce a finite-dimensional approximation of the regularized problem \eqref{eq:min}. Consider the subspace $X_p \subset X$, $X_p = \Span\{\psi_I\}_{I=1}^p$, mapped by $W$ into the space $W_p = \{w \in \R^\N: \ w_I = 0 \ \forall I > p \}$ (which is isomorphic to $\R^p$). Denote by $\mathbb{P}_p$ the orthogonal projection of $\ell^2(\N)$ onto $W_p$ and by $\widetilde{\mathbb{P}}_p = W^* \mathbb{P}_p W$ the orthogonal projection of $X$ onto $X_p$. 
Moreover, we introduce an orthogonal basis $\{\varphi_j\}_{j=1}^\infty$ on $Y$ and define $Y_q = \Span\{\varphi_j\}_{j=1}^q$ and the projection $\mathbb{P}_q: Y \rightarrow Y_q$. 
For any choice of $p,q>0$, let $A_\vN$ be the representation of the operator $A$ in the subspaces $X_p, Y_q$, namely $A_\vN = \mathbb{P}_q A \widetilde{\mathbb{P}}_p^*$. Consider the following minimization problem:
\beq
\label{eq:minN}
\min_{w\in W_p} \| A_\vN W^* w- \mathbb{P}_q m\|_{Y}^2+\lambda \|w\|_{\ell^1}.
\eeq
Denote by $w_{\delta,\vN}$ a solution of \eqref{eq:minN}. We can prove the following convergence result:
\begin{proposition}
Let $w^\dag$ satisfy \eqref{eq:sparse} and $A$ be an injective operator. Suppose moreover that for a suitable choice of $p,q$ it is possible to ensure that $\| w^\dag - \mathbb{P}_p w^\dag \|_{\ell^2} \leq c_p \delta$ and $\| (I - \mathbb{P}_q)A \|_{X \rightarrow Y} \leq c_q \delta$. Then, provided that $\lambda$ is chosen as $\lambda = c_0 \delta$, there exists a positive constant $c_2$ (depending on $\|A \|, \| w^\dag\|_{\ell^1}$, on the choice of $\{\psi_I\},\{\varphi_j\}$ and on the constants $c_0,c_1$,$c_p$,$c_q$) such that:
\begin{equation}
\|  w_{\delta,\vN}  - w^\dag \|_{\ell^1} \leq c_2 \delta.
\label{eq:conv2}
\end{equation}
\label{prop:conv2}
\end{proposition}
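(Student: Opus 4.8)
The plan is to view the finite-dimensional problem \eqref{eq:minN} as the finite-dimensional counterpart of \eqref{eq:min}, with the perturbed forward map $\widehat{A} := A_\vN W^*$ and the perturbed data $\mathbb{P}_q m$; to show that this perturbation only inflates the effective noise level by a factor proportional to $\delta$; and then to invoke Proposition~\ref{prop:conv1} together with a triangle inequality that absorbs the truncation of $w^\dag$.

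The starting observation is the identity $\widehat{A} w = \mathbb{P}_q A W^* \mathbb{P}_p w$ for all $w \in \ell^2(\N)$, which follows from $\widetilde{\mathbb{P}}_p^* = W^* \mathbb{P}_p W$ and $W W^* = I$ on $\ell^2(\N)$. In particular $\widehat{A}$ factors through $\mathbb{P}_p$, so minimizing the functional of \eqref{eq:minN} over $W_p$ or over all of $\ell^1(\N)$ yields the same $w_{\delta,\vN}$, and on $W_p$ the map acts simply as $w \mapsto \mathbb{P}_q A W^* w$. Set $\widehat{w}^\dag := \mathbb{P}_p w^\dag$, which still lies in $\ell^0(\N)$ with support contained in $S := \{\, I : w^\dag_I \neq 0 \,\}$; since $w^\dag$ is finitely supported, the hypothesis $\|w^\dag - \mathbb{P}_p w^\dag\|_{\ell^2}\le c_p\delta$ forces $S \subseteq \{1,\dots,p\}$ once $\delta$ is small. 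Using $m = A W^* w^\dag + \epsilon$,
\[
\mathbb{P}_q m - \widehat{A}\,\widehat{w}^\dag \;=\; \mathbb{P}_q A W^*\big(w^\dag - \mathbb{P}_p w^\dag\big) + \mathbb{P}_q \epsilon ,
\]
so $\|\mathbb{P}_q m - \widehat{A}\,\widehat{w}^\dag\|_Y \le \|A\|\,c_p\delta + \delta =: \widehat{\delta}$, proportional to $\delta$; and since $\lambda = c_0\delta = \frac{c_0}{1+\|A\|c_p}\,\widehat{\delta}$, the regularization parameter remains a linear function of the effective noise level $\widehat{\delta}$.

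Next one checks that Proposition~\ref{prop:conv1} (equivalently \cite[Corollary~2]{flemming2018injectivity}) applies, in the finite-dimensional setting, to the problem with forward map $\widehat{A}|_{W_p}$, data $\mathbb{P}_q m$ and exact solution $\widehat{w}^\dag$. The weak*-to-weak continuity required there is automatic because $\widehat{A}$ has finite-dimensional range. For injectivity, $A$ restricted to the finite-dimensional subspace $X_p$ is injective, hence bounded below there by some $\gamma_p>0$; together with $\|(I-\mathbb{P}_q)A\|_{X\to Y}\le c_q\delta$ this gives $\|\widehat{A} w\|_Y \ge (\gamma_p - c_q\delta)\|w\|_{\ell^2}$ for $w\in W_p$, so $\widehat{A}|_{W_p}$ is injective once $\delta$ is small enough that $c_q\delta<\gamma_p$. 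Proposition~\ref{prop:conv1} then yields
\[
\|w_{\delta,\vN} - \widehat{w}^\dag\|_{\ell^1} \;\le\; \widehat{c}_1\,\widehat{\delta} ,
\]
with $\widehat{c}_1$ depending on $\widehat{A}|_{W_p}$ (hence on $\|A\|$, on $\{\psi_I\}$ and $\{\varphi_j\}$, and on $\gamma_p$ and $c_q$), on $\|\widehat{w}^\dag\|_{\ell^0}\le\|w^\dag\|_{\ell^0}$, and on $c_0$ and $c_p$.

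Finally, since $w^\dag - \mathbb{P}_p w^\dag$ is supported in the finite set $S$, $\|w^\dag - \mathbb{P}_p w^\dag\|_{\ell^1}\le\sqrt{\|w^\dag\|_{\ell^0}}\,\|w^\dag - \mathbb{P}_p w^\dag\|_{\ell^2}\le\sqrt{\|w^\dag\|_{\ell^0}}\,c_p\delta$, and the triangle inequality closes the argument:
\[
\|w_{\delta,\vN} - w^\dag\|_{\ell^1} \;\le\; \|w_{\delta,\vN} - \widehat{w}^\dag\|_{\ell^1} + \|\widehat{w}^\dag - w^\dag\|_{\ell^1} \;\le\; \widehat{c}_1\,\widehat{\delta} + \sqrt{\|w^\dag\|_{\ell^0}}\,c_p\delta \;=:\; c_2\,\delta .
\]
I expect the main obstacle to be the uniform control of the stability constant $\widehat{c}_1$: it is governed by the modulus of injectivity $\gamma_p - c_q\delta$ of $\widehat{A}$ on $W_p$, so one must either treat $p,q$ as fixed data of the discretization (absorbing $\gamma_p$ into the dependence on $\{\psi_I\},\{\varphi_j\}$) or, if $p,q$ are allowed to grow as $\delta\to0$, reduce the relevant injectivity to the $p$-independent subspace $\mathrm{span}\{\psi_I : I\in S\}$ — legitimate because $\widehat{w}^\dag$ is supported in $S$ — and combine this with the smallness of $\|(I-\mathbb{P}_q)A\|$. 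This is precisely where the hypotheses that $A$ is injective and that $\|(I-\mathbb{P}_q)A\|\le c_q\delta$ are used.
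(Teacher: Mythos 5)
Your reduction---viewing \eqref{eq:minN} as the original $\ell^1$-Tikhonov problem with perturbed forward map $\widehat{A}=A_\vN W^*$ and perturbed data $\mathbb{P}_q m$, showing that the effective noise level is only inflated to $\widehat{\delta}=(1+\|A\|c_p)\delta$, and then invoking \cref{prop:conv1} for the perturbed problem---is a genuinely different route from the paper's. The proof in \cref{Appendix1} instead applies the variational source condition \eqref{eq:vsc} for the \emph{fixed} operator $AW^*$ directly to the discrete minimizer $w_{\delta,\vN}$, and estimates each discretization term by hand (including solving a quadratic inequality for the residual $Q=\|A_\vN W^* w_{\delta,\vN}-\mathbb{P}_q m\|_Y$). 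The advantage of the paper's route is that the constants $\beta$ and $C$ in \eqref{eq:vsc} belong to the infinite-dimensional problem and are therefore independent of $p$, $q$ and $\delta$ from the outset.

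This is exactly where your argument has a genuine gap, and you correctly identify it but do not close it. The constant $\widehat{c}_1$ you obtain from \cref{prop:conv1} depends on the operator $\widehat{A}|_{W_p}$, which varies with $\delta$, since $p$ and $q$ must grow as $\delta\to 0$ for the hypotheses $\|w^\dag-\mathbb{P}_p w^\dag\|_{\ell^2}\leq c_p\delta$ and $\|(I-\mathbb{P}_q)A\|_{X\to Y}\leq c_q\delta$ to be satisfiable. Your own quantification of injectivity, $\|\widehat{A}w\|_Y\geq(\gamma_p-c_q\delta)\|w\|_{\ell^2}$, degenerates: for a compact $A$ such as the Radon transform (whose singular values decay like $j^{-1/2}$, as recalled in the paper), the modulus $\gamma_p$ of injectivity on $X_p$ tends to $0$ as $p\to\infty$, so nothing prevents $\widehat{c}_1\to\infty$ and the final display does not yield a $\delta$-independent $c_2$. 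The repair you sketch---replacing injectivity on all of $W_p$ by injectivity on the fixed span of the support $S$ of $w^\dag$---is the right idea, but it cannot be carried out by quoting \cref{prop:conv1} as a black box: one must open up \cite[Corollary 2]{flemming2018injectivity} and verify that the variational source condition for $\widehat{A}$ holds with constants uniform in $p,q$, for instance by perturbing the approximate source elements of $AW^*$ using $\|\widehat{A}^*f-WA^*f\|\leq\|(I-\mathbb{P}_q)A\|_{X\to Y}\|f\|_Y\leq c_q\delta\|f\|_Y$. That uniform-VSC step is precisely the content the paper's proof supplies by working with \eqref{eq:vsc} for $AW^*$ itself; once you add it, the remainder of your argument (the triangle inequality and the bound $\|w^\dag-\mathbb{P}_p w^\dag\|_{\ell^1}\leq\sqrt{\|w^\dag\|_{\ell^0}}\,c_p\delta$) goes through.
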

The proof, which follows by an application of the variational source condition reported in \cite[Section 3]{flemming2018injectivity}, is reported in \cref{Appendix1}.

\begin{rem}
 Upper bounds of the kind $\| w^\dag - \mathbb{P}_p w^\dag \|_{\ell^2} \leq f(p)$ can be explicitly computed under some particular assumptions on $w^\dag$. If, for example, we suppose that $u^\dag$ is a cartoon-like image (\textit{i.e}., $u^\dag$ is a $C^2$-smooth functions apart from jump discontinuity along a finite set of $C^2-$curves) and $\{\psi_I\}$ is the Haar wavelets basis, it is well known that $\| w^\dag - \mathbb{P}_p w^\dag \|_{\ell^2} \leq p^{-1}$ (see, \textit{e.g.}, \cite[Chapter 9]{mallat1999wavelet}).  \\
On the other hand, an estimate for the term $\| (I - \mathbb{P}_q)A \|_{X \rightarrow Y}$ can be obtained by standard results of finite-rank approximation of compact operators.
For example, suppose that the operator $A$ is a compact operator. Define $\{s_j\}$ its singular values (i.e., let $\{(s_j,e_j)\}$ be the eigenvalues and eigenfunctions of $(A^*A)^{\frac{1}{2}}$) and suppose the sequence $s_j$ is non-increasingly converging to $0$. A sufficient condition for this is that $A$ is a Schatten operator of any class $p$. If we select the basis $\{\varphi_j \}$ such that $\varphi_j = U e_j$, where $U$ is the partial isometry in the polar decomposition $A = U(A^*A)^{\frac{1}{2}}$, then it holds $\| (I - \mathbb{P}_q )A \|_{X\rightarrow Y} \leq s_{q+1}$. In the case of the Radon transform in 2D, according to \cite[Section IV.3]{natterer2001mathematics}, $s_j = c_R j^{-\frac{1}{2}}$, hence to get $\| (I - \mathbb{P}_q )A \|_{X\rightarrow Y} \leq c_R \delta$ it is enough to consider $q \geq \frac{1}{\delta^2}-1$
\end{rem}
\par
A well-know technique for the solution of the minimization problem \eqref{eq:minN} is the Iterative Soft Thresholding Algorithm (introduced in \cite{D}), which consists in selecting an initial guess $w^{(0)} \in \R^p (\cong W_p)$ and in iteratively creating the sequence $\{w^{(n)}\}_{n=1}^N$ as follows:
\begin{equation} \label{eq:ISTA}
    w^{(n)} = \mathcal{T}(w^{(n-1)}) = \mathcal{S}_{\lambda/L}\left(w^{(n-1)} - \frac{1}{L} W A_\vN^* A_\vN W^* w^{(n-1)} + \frac{1}{L} W A_{\vN}^* m\right),
\end{equation}
where $\frac{1}{L}>0$ is interpreted as a (fictitious) time step and, for $\beta>0$, $S_\beta(w)$ is the (component-wise) soft-thresholding operator:
\[ [\mathcal{S}_\beta(w)]_I = S_\beta(w_I); \qquad S_\beta(w_I) = \left\{ \begin{aligned}
    w_I + \beta \quad & \text{if } w_I < -\beta \\
    0 \quad & \text{if } |w_I| \leq \beta \\
    w_I - \beta \quad & \text{if } w_I > \beta \\
\end{aligned}\right. .\]
The convergence of $\{w^{(N)}\}$ to a minimizer $w_{\delta,\vN}$ of $\eqref{eq:minN}$ is analyzed, in an infinite dimensional context, in \cite{BL}. The following result for the discrete problem under consideration is instead a direct consequence of \cite[Theorem 25]{BNPS}:
\begin{proposition}
If $L$ is chosen such that $L \geq \| W A_\vN^* A_\vN W^* \|/2 $ then the sequence $\{w^{(N)}\}$ generated via \eqref{eq:ISTA} by any $w^{(0)} \in \R^p$ converges in $\ell^2$ to the solution $w_{\delta,\vN}$ of \eqref{eq:minN}. Moreover, there exist $c_3 > 0$ and $0 \leq a < 1$ (both depending on $A_\vN$, $L$ and $\| w^\dag\|_{\ell^2}$) such that 
\begin{equation} \label{eq:BL}
\| w^{(N)} - w_{\delta,\vN} \|_{\ell^2} \leq c_3 a^N.
\end{equation}
\end{proposition}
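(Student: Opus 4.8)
The plan is to recast problem \eqref{eq:minN} as the prototypical $\ell^1$-regularized least-squares problem over the finite-dimensional space $W_p \cong \R^p$ and to show that it fits the framework of \cite[Theorem 25]{BNPS}, from which both convergence and the linear rate \eqref{eq:BL} follow at once. Setting $B := A_\vN W^* : W_p \to Y$, the functional to be minimized is $F(w) = f(w) + g(w)$ with $f(w) := \|Bw - \mathbb{P}_q m\|_Y^2$ a nonnegative convex quadratic — hence $C^\infty$ with globally Lipschitz gradient $\nabla f(w) = 2\big(W A_\vN^* A_\vN W^* w - W A_\vN^* m\big)$, using $A_\vN^*\mathbb{P}_q = A_\vN^*$ — and $g(w) := \lambda\|w\|_{\ell^1}$ proper, convex, lower semicontinuous and polyhedral. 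Since $\lambda>0$, $F$ is coercive and continuous on $\R^p$, so the solution set of \eqref{eq:minN} is nonempty, convex and compact; pick any $w_{\delta,\vN}$ in it.

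Next I would identify \eqref{eq:ISTA} with the forward--backward (proximal-gradient) iteration for $F$. Indeed, $\mathcal{S}_\beta = \operatorname{prox}_{\beta\|\cdot\|_{\ell^1}}$ for every $\beta>0$, and the bracketed argument in \eqref{eq:ISTA} is $w^{(n-1)}$ minus a fixed multiple of $\nabla f(w^{(n-1)})$, so \eqref{eq:ISTA} reads $w^{(n)} = \operatorname{prox}_{\tau g}\!\big(w^{(n-1)} - \tau'\nabla f(w^{(n-1)})\big)$ with step sizes tied to $1/L$. The requirement that the step be small enough relative to $\operatorname{Lip}(\nabla f)$, which is a multiple of $\|W A_\vN^* A_\vN W^*\|$, is precisely the hypothesis $L \ge \|W A_\vN^* A_\vN W^*\|/2$ of the Proposition; under it the sufficient-decrease inequality $F(w^{(n)}) \le F(w^{(n-1)}) - c\,\|w^{(n)} - w^{(n-1)}\|_{\ell^2}^2$ holds, which is one of the standing assumptions of \cite{BNPS}.

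Then I would invoke \cite[Theorem 25]{BNPS}. Because $f$ is a convex quadratic and $g = \lambda\|\cdot\|_{\ell^1}$ is polyhedral, $F$ is a piecewise linear-quadratic convex function, hence satisfies a Luo--Tseng error bound at its minimizers (equivalently, the Kurdyka--\L ojasiewicz inequality with exponent $1/2$). This, together with sufficient decrease and boundedness of the iterates, is exactly what \cite{BNPS} turns into $\ell^2$-convergence $w^{(N)} \to w_{\delta,\vN}$ and the linear estimate \eqref{eq:BL}, where $a \in [0,1)$ is controlled by $\operatorname{Lip}(\nabla f)$ (hence by $A_\vN$ and $L$) and the error-bound modulus, and $c_3$ by the initial gap $\|w^{(0)} - w_{\delta,\vN}\|_{\ell^2}$; the stated dependence of $c_3$ on $\|w^\dag\|_{\ell^2}$ then comes from bounding $\|w_{\delta,\vN}\|_{\ell^2}$ through the optimality inequality $F(w_{\delta,\vN}) \le F(w^\dag)$.

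The work is bookkeeping rather than conceptual. The one delicate point is reconciling the normalization conventions — the quadratic term in \eqref{eq:minN} carries no factor $\tfrac12$, the data step in \eqref{eq:ISTA} carries $\tfrac1L$, and \cite{BNPS} uses its own step-size parametrization — so that the hypothesis emerges exactly as $L \ge \|W A_\vN^* A_\vN W^*\|/2$; one must also check that \eqref{eq:minN} genuinely lies within the class of problems covered by \cite[Theorem 25]{BNPS}, in particular that \emph{no} injectivity or restricted-isometry assumption on $B = A_\vN W^*$ is needed, the linear rate here being due solely to finite-dimensionality and the polyhedral structure of the $\ell^1$ penalty. A self-contained alternative, should one wish to avoid \cite{BNPS}, would chain (a) existence of a minimizer by coercivity, (b) Fej\'er-type monotonicity and subsequential convergence of forward--backward splitting to some minimizer, and (c) the Luo--Tseng error bound for $\ell^1$-penalized least squares to upgrade (b) to global linear convergence.
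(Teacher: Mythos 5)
Your proposal is correct and follows essentially the same route as the paper, which offers no written proof for this proposition beyond declaring it ``a direct consequence of \cite[Theorem 25]{BNPS}''; your text is a faithful elaboration of exactly that citation (ISTA as forward--backward splitting for the composite objective, step-size condition tied to $\|W A_\vN^* A_\vN W^*\|$, and linear rate via the error-bound/KL property of $\ell^1$-penalized least squares in finite dimensions). The normalization bookkeeping you flag is real --- the threshold $\lambda/L$ in \eqref{eq:ISTA} is consistent with step $1/L$ on $\tfrac12\|\cdot\|^2$ while the data step matches step $1/(2L)$ on $\|\cdot\|^2$, and the bound $\|w_{\delta,\vN}\|$ should be obtained by comparing with $\mathbb{P}_p w^\dag$ rather than $w^\dag$ --- but these are minor reconciliations, not gaps.
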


\subsection{A modification of ISTA}
\label{subsec:ista_perturbed}

We now consider a perturbation of ISTA \eqref{eq:ISTA}. Let $Z:\ell^2(\N) \rightarrow \ell^2(\N)$ satisfy
\begin{equation}
\| W A_\vN^* A_\vN W^* - Z \|_{\ell^2 \rightarrow \ell^2} \leq \rho.
    \label{eq:AZ}
\end{equation}
Then, we substitute $Z$ in place of the matrix $W A_\vN^* A_\vN W^*$ in the expression of ISTA. To remark the dependency on the perturbation amplitude $\rho$, we denote by $\{\wt{n}\}$ the sequence obtained by selecting $\wt{0} \in \R^p$ and iterating
\begin{equation}
  \wt{n} = \mathcal{T_Z}(\wt{n-1}) = \mathcal{S}_{\lambda/L}\left(\wt{n-1} - \frac{1}{L} Z \wt{n-1} + \frac{1}{L} W A_\vN^* m \right).
\label{eq:ISTAZ}    
\end{equation}
The following result shows a connection between the convergence of the sequence $\{\wt{n}\}$ to the minimizer $w_{\delta,\vN}$ and the magnitude of the perturbation $\rho$.
\begin{proposition} \label{prop:conv3}
Let $w^{(0)} = \wt{0}$, $L \geq \| W A_\vN^* A_\vN W^* \|$ and consider $N_0, \eta_0>0$. Then there exists a constant  $\tilde{c}_4$, depending on $L,A,w^{(0)}, \| w^\dag \|_{\ell^2}$ and on $N_0,\eta_0$, such that if $N \geq N_0$ and $\rho N \leq \eta_0$ then 
\begin{equation} \label{eq:conv4Old}
 \| \wt{N}-w_{\delta,\vN}\|_{\ell^2}  \leq c_3 a^N + \tilde{c}_4 \rho N.
\end{equation}
If, moreover, $N,\rho$ are chosen as $N > \frac{\ln(\delta^{-1})}{\ln(a^{-1})}$ and $\rho < \frac{\delta}{N}$, then (for $c_4 = c_3 + \tilde{c}_4$)
\begin{equation} \label{eq:conv4}
 \| \wt{N}-w_{\delta,\vN}\|_{\ell^2}  \leq c_4 \delta.
\end{equation}
\end{proposition}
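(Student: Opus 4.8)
The strategy is to compare the perturbed iterates $\{\wt{n}\}$ with the genuine ISTA iterates $\{w^{(n)}\}$ of \eqref{eq:ISTA} started from the same $w^{(0)}$, and then to invoke the linear convergence \eqref{eq:BL} of the latter towards the minimizer $w_{\delta,\vN}$. Setting $d_n := \|\wt{n}-w^{(n)}\|_{\ell^2}$, the triangle inequality gives $\|\wt{N}-w_{\delta,\vN}\|_{\ell^2}\le d_N + c_3 a^N$, so the whole task reduces to proving an estimate of the form $d_N \le \tilde c_4\,\rho N$.

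Next I would set up a recursion for $d_n$. Write $K := W A_\vN^* A_\vN W^* = (A_\vN W^*)^*(A_\vN W^*)\succeq 0$; since $L\ge\|K\|$ we have $0\preceq \tfrac1L K\preceq I$, hence $\|I-\tfrac1L K\|_{\ell^2\to\ell^2}\le1$. Subtracting \eqref{eq:ISTA} from \eqref{eq:ISTAZ}, the common data term $\tfrac1L WA_\vN^* m$ cancels; using that $\mathcal{S}_{\lambda/L}$ is $1$-Lipschitz and the algebraic identity
\[
Z\wt{n-1}-Kw^{(n-1)} \;=\; (Z-K)\wt{n-1} + K\big(\wt{n-1}-w^{(n-1)}\big),
\]
one obtains
\[
d_n \;\le\; \Big\|\big(I-\tfrac1L K\big)\big(\wt{n-1}-w^{(n-1)}\big) - \tfrac1L(Z-K)\wt{n-1}\Big\|_{\ell^2} \;\le\; d_{n-1} + \tfrac{\rho}{L}\,\|\wt{n-1}\|_{\ell^2},
\]
where the bound \eqref{eq:AZ} on $\|K-Z\|$ was used in the last step.

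To close this recursion I would exploit that the \emph{unperturbed} sequence is bounded: by \eqref{eq:BL} together with a bound on $\|w_{\delta,\vN}\|_{\ell^2}$ coming from \cref{prop:conv2} (namely $\|w_{\delta,\vN}\|\le\|w^\dag\|_{\ell^1}+c_2\delta$), the quantity $C_w:=\sup_n\|w^{(n)}\|_{\ell^2}$ is finite and independent of $N$, depending only on the admissible quantities. Then $\|\wt{n-1}\|\le C_w+d_{n-1}$, so $d_n\le(1+\rho/L)\,d_{n-1}+\rho C_w/L$ with $d_0=0$, which solves to $d_n\le C_w\big((1+\rho/L)^n-1\big)$. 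Using the elementary inequality $(1+x)^n-1\le nx\,e^{nx}$ together with the hypothesis $\rho N\le\eta_0$ (and $n\le N$) gives $d_n\le \tfrac{C_w e^{\eta_0/L}}{L}\,\rho N$, so \eqref{eq:conv4Old} holds with $\tilde c_4:=C_w e^{\eta_0/L}/L$ (the requirement $N\ge N_0$ only serving to absorb finitely many initial terms into the constant if a cruder route is taken). Finally, \eqref{eq:conv4} is immediate: $N>\ln(\delta^{-1})/\ln(a^{-1})$ forces $a^N<\delta$, and $\rho<\delta/N$ forces $\rho N<\delta$, so $\|\wt{N}-w_{\delta,\vN}\|_{\ell^2}\le c_3\delta+\tilde c_4\delta=:c_4\delta$.

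The delicate point is the recursion $d_n\le(1+\rho/L)\,d_{n-1}+\mathrm{const}\cdot\rho$: a naive Gr\"onwall estimate produces an $e^{\rho n/L}$ amplification, and the bound is useful only because the assumption $\rho N\le\eta_0$ caps this factor by the constant $e^{\eta_0/L}$. Equally important, one must control $\|\wt{n}\|$ uniformly in $n$ to even close the recursion, and the right way is through the \emph{convergence} (hence boundedness) of the unperturbed ISTA sequence rather than a direct crude estimate, which would grow linearly in $n$ and degrade the final bound from $\rho N$ to $\rho N^2$. One also has to notice that the hypothesis $L\ge\|W A_\vN^* A_\vN W^*\|$ (stronger than the $L\ge\|\cdot\|/2$ used for plain ISTA convergence) is precisely what delivers the nonexpansivity $\|I-\tfrac1L K\|\le1$ on which the comparison rests.
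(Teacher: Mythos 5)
Your proof is correct and follows essentially the same route as the paper's: compare the perturbed and unperturbed iterates via the nonexpansivity of $\mathcal{S}_{\lambda/L}$, close the resulting recursion using the boundedness of the convergent ISTA sequence, and use $\rho N\le\eta_0$ to cap the Gr\"onwall factor $(1+\rho/L)^N$ by a constant before invoking \eqref{eq:BL}. The only (immaterial) difference is that you apply $Z-K$ to $\wt{n-1}$ and use $\|I-\tfrac1L K\|\le1$, whereas the paper applies it to $w^{(n)}$ and bounds $\|I-\tfrac1L Z\|\le1+\rho/L$; both yield the same recursion and the same final constant up to renaming.
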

The proof of this proposition follows by the nonexpansivity of the soft-thresholding operator and is reported in \cref{Appendix2}.

We collect the results obtained in \cref{prop:conv2} and \cref{prop:conv3} in the following final convergence estimate:
\begin{theorem}
 Let $w^\dag$ satisfy \eqref{eq:sparse} and let $A$ be injective. For sufficiently small $\delta$, select a regularization parameter $\lambda = c_0 \delta$. Select $p,q$ s.t. $\| w^\dag - \mathbb{P}_p w^\dag \| \leq c_p \delta$ and $\| (I - \mathbb{P}_q)A \|_{X \rightarrow Y} \leq c_q \delta$. Let $L \geq \| W A_\vN^* A_\vN W^* \| $ and consider the perturbed ISTA iterations \eqref{eq:ISTAZ}, where the operator $Z$ satisfies \eqref{eq:AZ}, $N = \log_a \delta$ and $\rho =  \frac{\delta}{N}$. Then, there exists a positive constant $c_5$ (depending on the previously introduced constants $c_0,c_1,c_2,c_3,c_4,c_p,c_q$) such that, for sufficiently small $\delta$,
 \begin{equation}
  \| \wt{N} - w^\dag \|_{\ell^2}  \leq c_5 \delta.
     \label{eq:superconv}
 \end{equation}
 \label{thm:superconv}
 \end{theorem}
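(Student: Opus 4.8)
\emph{Proof strategy.} The plan is to obtain \eqref{eq:superconv} by a triangle inequality that routes through the finite-dimensional regularized minimizer $w_{\delta,\vN}$ of \eqref{eq:minN}, which is the natural bridge between the perturbed ISTA iterate $\wt{N}$ and the exact sparse solution $w^\dag$. Concretely, I would write
\begin{equation*}
\|\wt{N} - w^\dag\|_{\ell^2} \;\leq\; \|\wt{N} - w_{\delta,\vN}\|_{\ell^2} \;+\; \|w_{\delta,\vN} - w^\dag\|_{\ell^2},
\end{equation*}
and then estimate the two summands separately: the first is the optimization/perturbation error and is handled by \cref{prop:conv3}, while the second is the regularization-plus-discretization error and is handled by \cref{prop:conv2}. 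The whole argument then amounts to checking that the prescribed scalings $\lambda = c_0\delta$, $N = \log_a\delta$, $\rho = \delta/N$ (together with the standing hypotheses on $p$, $q$ and the injectivity of $A$) place us in the regime of validity of both propositions, and to collecting constants.

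For the first summand I would verify the hypotheses of \cref{prop:conv3}: since $L \geq \|W A_\vN^* A_\vN W^*\|$ and $Z$ obeys \eqref{eq:AZ}, it only remains to note that $\rho N = \delta \leq \eta_0$ and $N = \log_a \delta \geq N_0$ for $\delta$ sufficiently small, while $N \geq \log_a\delta$ forces $a^N \leq \delta$. Feeding this into \eqref{eq:conv4Old} gives $\|\wt{N} - w_{\delta,\vN}\|_{\ell^2} \leq c_3 a^N + \tilde{c}_4 \rho N \leq (c_3 + \tilde{c}_4)\delta = c_4 \delta$, i.e.\ exactly \eqref{eq:conv4}. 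If one wants to honour the strict inequalities $N > \frac{\ln(\delta^{-1})}{\ln(a^{-1})}$ and $\rho < \delta/N$ of \cref{prop:conv3} literally, it suffices to take $N = \lceil \log_a\delta\rceil + 1$, which costs nothing in the estimates.

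For the second summand I would use the elementary embedding $\|x\|_{\ell^2} \leq \|x\|_{\ell^1}$, legitimate here because both $w^\dag$ (by \eqref{eq:sparse}) and $w_{\delta,\vN} \in W_p$ belong to $\ell^1(\N)$; together with \cref{prop:conv2} this gives $\|w_{\delta,\vN} - w^\dag\|_{\ell^2} \leq \|w_{\delta,\vN} - w^\dag\|_{\ell^1} \leq c_2\delta$, the hypotheses $\|w^\dag - \mathbb{P}_p w^\dag\| \leq c_p\delta$ and $\|(I-\mathbb{P}_q)A\|_{X\to Y} \leq c_q\delta$ being precisely what \cref{prop:conv2} requires. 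Adding the two bounds yields $\|\wt{N} - w^\dag\|_{\ell^2} \leq (c_2 + c_4)\delta$, so the statement holds with $c_5 = c_2 + c_4$, which unwinds to a dependence only on $c_0,\dots,c_4,c_p,c_q$ as claimed.

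I do not expect a genuine obstacle: once \cref{prop:conv2} and \cref{prop:conv3} are in hand the theorem is essentially bookkeeping. The one point that deserves care is the compatibility of the asymptotic regimes — a single choice $N \sim \log_a\delta$, $\rho \sim \delta/N$ must simultaneously drive the linear-rate term $c_3 a^N$ below $O(\delta)$, keep the accumulated perturbation $\tilde{c}_4 \rho N$ at $O(\delta)$, and stay inside the admissibility window $N \geq N_0$, $\rho N \leq \eta_0$ of \cref{prop:conv3}. All three hold for $\delta$ small enough, which is exactly the ``for sufficiently small $\delta$'' clause in the statement, and that is where I would be careful.
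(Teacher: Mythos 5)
Your proposal is correct and follows exactly the route the paper intends: the theorem is stated as a collection of \cref{prop:conv2} and \cref{prop:conv3}, combined by the triangle inequality through $w_{\delta,\vN}$ with the embedding $\|\cdot\|_{\ell^2}\leq\|\cdot\|_{\ell^1}$ converting the $\ell^1$ bound of \cref{prop:conv2}. Your explicit verification of the admissibility window for $N$ and $\rho$ is a useful detail the paper leaves implicit.
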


\subsection{Wavelets in 2D}
\label{subsec:wavelets}

In order to derive the main results of the paper, we need to assume that the orthogonal basis $\{\psi_I\}_{I = 1}^\infty$ is a wavelet basis in $X = L^2(\Omega)$. Although our approach is sufficiently general to handle higher-dimensional spaces, we are going to focus on the two-dimensional case, \textit{i.e}., $\Omega \subset \R^2$ (\textit{e.g.}, $\Omega =  [0,1]^2$). Before moving to the representation of the operator $A^*A$ with respect to such basis, we need to describe in more details its structure.  \par
A common way to define a wavelet basis in $\R^2$ is to rely on two real functions $\psi$ and $\varphi$, respectively defined as mother wavelet and scaling function, whose support is in $[0,1]$. We identify an element $\psi_I$ of the basis by its scale $j$, its translation $k \in \N_0^2$ and its type $(t) \in \{(v),(h),(d),(f)\}$ (respectively, vertical, horizontal, diagonal and low-pass filter). We denote $\psi_I(x)$ as $\psi_{j,k}^{(t)}(x) = 2^j \psi^{(t)}(2^{j}x - k)$, $x \in [0,1]^2$, where we have:
\[
\begin{aligned}
    &\psi^{(v)}(x_1,x_2) = \phi(x_1)\psi(x_2) \qquad &\psi^{(h)}(x_1,x_2) = \psi(x_1)\phi(x_2) \\
    &\psi^{(d)}(x_1,x_2) = \psi(x_1)\psi(x_2) \qquad &\psi^{(f)}(x_1,x_2) = \phi(x_1)\phi(x_2) \\
\end{aligned}
\]
When selecting a maximum scale $J$ (and $J_0 < J$ as coarsest scale), we can define a wavelet basis of $p = 2^{2J}$ elements as follows: take $j \in \{J_0, \ldots, J_1 = J-1\}$; for each $j \neq J_0$, consider wavelets of the types $(v)$, $(h)$ and $(d)$, whereas for $j = J_0$ include also the type $(f)$. For each level $j$ and type $(t)$, consider offsets $k = (k_1, k_2)$, $k_1 = 0, \ldots, 2^j-1$, $k_2 = 0, \ldots, 2^j-1$. \par
We group the wavelet basis functions in subbands, each of which is identified by a scale $j$ and a type $(t)$, obtaining $3(J - J_0) + 1$ subsets.

\section{ISTA and Convolutional Neural Networks}
\label{sec:unrolled_ista_theo}

It is already well known that the unrolled iterations of ISTA can be considered as the layers of a neural network (see, \textit{e.g.}, \cite{Gregor}). Indeed, the $n$-th iteration of ISTA can be written as 
\begin{equation}
w^{(n)} = \mathcal{S}_{\lambda/L}\left(w^{(n-1)} - \frac{1}{L} K^{(n)} w^{(n-1)} + \frac{1}{L} b^{(n)} \right),
\label{eq:RESnet}    
\end{equation}
being $K^{(n)} = W A_\vN^* A_\vN W^*$ and $b^{(n)} = W A_\vN^* m$, independently of $n$. At the same time, \eqref{eq:RESnet} can be seen as the $n$-th layer of a recurrent Neural Network, where $K^{(n)}$ is the matrix of the weight coefficients and $b^{(n)}$ is the bias vector. When considering only the first $N$ iterations of ISTA, we can collect the parameters appearing in the layers in a vector $\theta \in \Theta$. Together with the entries of the matrices $K^{(n)}$, we may consider as parameters the steplength $L$ as well as the regularization parameter $\lambda$: see \cref{sec:our_models} for more details. Conversely, the bias vectors $b^{(n)}$ are not to be considered as parameters: they are fixed and equal to $W A_\vN^* m$ in each layer. We then introduce the map $f_\theta: Y \rightarrow \ell^1(\N)$, parameterized by $\theta \in \Theta$, which takes as an input $m \in Y_q$ and computes $N$ iterations like \eqref{eq:RESnet}, where, for each $n$, $K^{(n)} \in \R^{p \times p}$ is specified in $\theta$ and $b^{(n)} = W A_\vN^* m$. For any selected value of $p,q,N,\lambda,L$, we know that there exists a particular choice $\theta_0$ which corresponds to the ISTA iterations associated to the measurements $m$.
\par
In this section we show that, under some assumptions on the operator $A$, it is possible to interpret the operations in \eqref{eq:RESnet} as a layer of a CNN. We therefore provide a fairly general network architecture which allows to recover the standard ISTA iterations (or a perturbation of the kind described by \eqref{eq:AZ}), for a specific choice of the parameters.
\par
From now on, we focus on the case $X = L^2(\Omega)$, and consider a wavelet basis $\{\psi_I\}$ of the kind described in  \cref{subsec:wavelets}.

\subsection{A convolutional interpretation of ISTA}
\label{subsec:ista_as_cnn}

We first show, under additional assumptions on operator $A$, how to translate the Neural Network encoded by the operator $f_\theta$ above into a CNN, allowing for a significant reduction of the number of parameters involved.  
\par
Suppose that the operator $A^*A$ is a convolutional kernel operator, \textit{i.e.}, 
\begin{equation}
\begin{aligned}
 \mathbf{K}_{I,I'} = (A^*A \psi_I, \psi_{I'})_X &= \int_{\R^2} \int_{\R^2} K(x,x')\psi_I(x)\psi_{I'}(x')dx dx', \\
 K(x,x') &= K(x-x').
\end{aligned}
   \label{eq:convol}
\end{equation}
According to the description in \cref{subsec:wavelets}, the wavelet basis can be naturally split in subbands, each of which is identified by a couple $j$,$(t)$. This implies that the matrix $\mathbf{K}$ representing $A^*A$ can be seen as a block matrix. We now aim at describing the application of each block $\mathbf{K}_{j \rightarrow j'}^{(t) \rightarrow (t')} w_j^{(t)}$ by means of the following operations:
\begin{enumerate}
    \item Discrete convolution. Let $B \in \R^{b \times b}$, $C \in \R^{(2b-1)\times(2b-1)}$, and denote the elements of $C$ with indices $i,j$ , being $i = -b+1, \ldots, 0, \ldots, b-1$, $j = -b+1, \ldots, 0, \ldots, b-1$. Then, $C \ast B \in \R^{b \times b}$:
    \begin{equation}
        (C \ast B)_{k,l} = \sum_{i=0}^{b-1} \sum_{j=0}^{b-1} C_{k-i,l-j} B_{i,j}
        \label{eq:convolution}
    \end{equation}
    \item Upsampling. Let $B \in \R^{b \times b}$; then, $\mathscr{U}(B) \in \R^{2b \times 2b}$ satisfies:
    \begin{equation}
        \mathscr{U}(B)[2k:2k+1,2l:2l+1] = 
        \begin{bmatrix} 
            B_{k,l} & 0 \\ 0 & 0
        \end{bmatrix}
        \quad \forall k,l = 0,\ldots,b-1,
    \label{eq:upsample}
    \end{equation} 
    where the notation $\mathscr{U}(B)[2k:2k+1,2l:2l+1]$ is used to denote a submatrix of $\mathscr{U}(B)$ containing the rows from $2k$ to $2k+1$ and all the columns from $2l$ to $2l+1$. We denote by $\mathscr{U}^\eta$ the iterated application of $\mathscr{U}$: $\mathscr{U}^\eta = \mathscr{U} \circ \ldots \circ \mathscr{U} $ ($\eta$ times).
    \item Downsampling. Let $B \in \R^{2b \times 2b}$; then, $\mathscr{D}(B) \in \R^{b \times b}$ satisfies:
    \begin{equation}
        \mathscr{D}(B)_{k,l} = B_{2k,2l}
        \quad \forall k,l = 0,\ldots,b-1.
    \label{eq:downsample}
    \end{equation}
     We denote by $\mathscr{D}^\eta$ the iterated application of $\mathscr{D}$: $\mathscr{D}^\eta = \mathscr{D} \circ \ldots \circ \mathscr{D} $ ($\eta$ times).
\end{enumerate}
The following crucial result provides a full description of the convolutional interpretation of the matrix representing $A^*A$ in the wavelet domain. Such result can be compared to the ones already known in literature, see \textit{e.g.} \cite[Formula (4.2)]{dahmen1994wavelet}, although the more complicated structure of the wavelet basis entails some significant differences.
\begin{proposition}
Let $\mathbf{K} \in \R^{p \times p}$ be the matrix representing an operator $A^*A$ satisfying \eqref{eq:convol} in a 2D wavelet basis $\{\psi_I\}_{I = 1}^p$. For a vector $w \in \R^p$, let $w_j^{(t)}$ be the vector of the wavelet components related to basis functions of scale $j$ and type $(t)$. Let $\mathbf{K}_{j \rightarrow j'}^{(t) \rightarrow (t')}$ denote the block of $\mathbf{K}$ corresponding to the $j,(t)$ subset of the column indices and the $j',(t')$ subset of the row indices. Then
\begin{equation}
    \mathbf{K}_{j \rightarrow j'}^{(t) \rightarrow (t')} w_j^{(t)} = \left\{ 
    \begin{aligned}
     \mathscr{D}^{\delta} (& \widetilde{\mathbf{K}}_{j \rightarrow j'}^{(t) \rightarrow (t')} \ast  W_j^{(t)}) & \qquad \text{if} \quad j < j' \\
    & \widetilde{\mathbf{K}}_{j \rightarrow j'}^{(t) \rightarrow (t')} \ast W_j^{(t)} & \qquad \text{if} \quad j = j'\\
    & \widetilde{\mathbf{K}}_{j \rightarrow j'}^{(t) \rightarrow (t')} \ast \mathscr{U}^{\delta}( W_j^{(t)}) & \qquad \text{if} \quad j > j'
\end{aligned}
 \right.
\label{eq:madness}    
\end{equation}
being $\delta = |j'-j|$, and  $\widetilde{\mathbf{K}}_{j \rightarrow j'}^{(t) \rightarrow (t')} \in \R^{(2^{\hat{j}+1} - 1)\times(2^{\hat{j}+1} - 1)}$ (where $\hat{j} = \max(j,j')$):
\begin{equation}
\begin{aligned}
    \left[\widetilde{\mathbf{K}}_{j \rightarrow j'}^{(t) \rightarrow (t')}\right]_d = \int_{\R^2}\int_{\R^2} K(x-x'-2^{-\hat{j}}d) \ \psi_{j',0}^{(t')}(x') \ \psi_{j,0}^{(t)}(x) dx dx' \\
    d = (d_1,d_2); \quad d_1,d_2 = \{-2^{\hat{j}}+1,\ldots,0,\ldots,2^{\hat{j}}-1\}.
\end{aligned}
\label{eq:madness2}    
\end{equation}
The matrix $W_j^{(t)} \in \R^{2^j \times 2^j}$ is obtained by reshaping the vector $w_j^{(t)} \in \R^{2^{2j}}$ so that $[W_j^{(t)}]_d$ is the component $w_I$ whose index is identified by $(j,(t),d)$.
\label{prop:convolution}
\end{proposition}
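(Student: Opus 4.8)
The plan is to compute the matrix entry $\mathbf{K}_{I,I'}$ explicitly in terms of the convolution kernel $K$ and the translation/dilation structure of the wavelet basis, and then recognize the resulting expression as a discrete convolution (possibly composed with up- or downsampling). First I would fix two subbands, one with scale $j$ and type $(t)$ (for the column index) and one with scale $j'$ and type $(t')$ (for the row index), and write a generic basis function as $\psi_{j,k}^{(t)}(x) = 2^j \psi^{(t)}(2^j x - k)$ for $k \in \{0,\dots,2^j-1\}^2$. Plugging into \eqref{eq:convol}, and using $K(x,x') = K(x-x')$, the entry indexed by $(j,(t),k)$ and $(j',(t'),k')$ becomes
\begin{equation}
\mathbf{K}_{I,I'} = \int_{\R^2}\int_{\R^2} K(x-x') \, 2^j \psi^{(t)}(2^j x - k) \, 2^{j'} \psi^{(t')}(2^{j'} x' - k') \, dx\, dx'.
\end{equation}
The natural move is to substitute $y = 2^{\hat{j}} x - $ (something) and $y' = 2^{\hat{j}} x'-$ (something) with $\hat{j} = \max(j,j')$, so that both integrations take place against fixed reference wavelets $\psi_{j,0}^{(t)}$, $\psi_{j',0}^{(t')}$ rescaled to the common finest scale, and the dependence on $k,k'$ is pushed entirely into a translation of the kernel. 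After the change of variables one should see that the entry depends on $k,k'$ only through a vector $d$ that is a fixed affine function of $k$ and $k'$ — specifically $d \sim 2^{\delta'}k' - 2^{\delta}k$ type combinations where $\delta,\delta'$ compensate the mismatch between $j,j'$ and $\hat{j}$ — and that this dependence is exactly $[\widetilde{\mathbf{K}}_{j\to j'}^{(t)\to(t')}]_d$ as defined in \eqref{eq:madness2}.

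Next I would separate the three cases. When $j = j'$, we have $\hat{j}=j=j'$, $\delta=0$, the vector $d$ equals $k'-k$ ranging over $\{-2^j+1,\dots,2^j-1\}^2$, and the action of the block on $w_j^{(t)}$ is $\sum_k [\widetilde{\mathbf{K}}]_{k'-k} [W_j^{(t)}]_k$, which is precisely the discrete convolution \eqref{eq:convolution} once we reshape $w_j^{(t)}$ into the matrix $W_j^{(t)}$; here one must be careful that the convolution in \eqref{eq:convolution} is indexed so that $C \in \R^{(2b-1)\times(2b-1)}$ with $b = 2^j$, matching the size $(2^{\hat{j}+1}-1)$ claimed for $\widetilde{\mathbf{K}}$. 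When $j < j'$ (so $\hat{j}=j'$, $\delta = j'-j$), the reference wavelet $\psi_{j,0}^{(t)}$ lives on a coarser grid; after rescaling to level $j'$ the translations $k$ of the coarse wavelet correspond to every $2^{\delta}$-th grid point, and the sum over $k$ running against all output positions $k'$ produces a convolution of $\widetilde{\mathbf{K}}$ with $W_j^{(t)}$ followed by keeping only every $2^{\delta}$-th entry — i.e. $\mathscr{D}^{\delta}(\widetilde{\mathbf{K}} \ast W_j^{(t)})$, which is the first branch of \eqref{eq:madness}. Symmetrically, when $j > j'$ the coarse object is on the output side, and writing the sum over the fine input index $k$ against a rescaled coarse output reference forces us first to interpret $W_j^{(t)}$ on the fine grid by zero-insertion, i.e. $\mathscr{U}^{\delta}(W_j^{(t)})$, and then convolve — giving the third branch. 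In each case I would double-check the index bookkeeping: the zero padding in the definition \eqref{eq:upsample} of $\mathscr{U}$ (which places $B_{k,l}$ in the top-left corner of each $2\times2$ block and zeros elsewhere) and the decimation in \eqref{eq:downsample} of $\mathscr{D}$ (which keeps $B_{2k,2l}$) must line up with exactly how the dyadic rescaling maps coarse-grid translations into fine-grid translations, including the fact that $\psi^{(t)}$ is supported in $[0,1]$ so no wraparound or boundary terms appear.

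The main obstacle I anticipate is purely the index/normalization bookkeeping: getting the affine relation between $(k,k')$ and $d$ exactly right, verifying that the $2^j$ and $2^{j'}$ normalization prefactors in $\psi_{j,k}^{(t)}$ combine with the Jacobians of the two changes of variables to leave the clean formula \eqref{eq:madness2} with no stray powers of $2$, and confirming that the support sizes force $d$ to range over precisely $\{-2^{\hat{j}}+1,\dots,2^{\hat{j}}-1\}^2$ so that $\widetilde{\mathbf{K}}$ has the stated dimensions $(2^{\hat{j}+1}-1)\times(2^{\hat{j}+1}-1)$. A secondary subtlety is that the four wavelet types $(v),(h),(d),(f)$ are tensor products of $\psi$ and $\phi$ in the two coordinates; since the argument is genuinely two-dimensional and the kernel $K$ does not factor, one cannot reduce to a 1D computation, but the type only enters through the fixed reference functions $\psi_{j,0}^{(t)}$, $\psi_{j',0}^{(t')}$ inside the definition of $\widetilde{\mathbf{K}}$, so it plays no role in the convolution/up/down structure — I would state this explicitly to isolate where the types matter. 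Once the single-block identity \eqref{eq:madness}–\eqref{eq:madness2} is established, the proposition follows immediately since $\mathbf{K}$ acting on $w$ is just the sum over source subbands of these block actions.
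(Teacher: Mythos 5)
Your proposal follows essentially the same route as the paper's proof: write the generic entry $\mathbf{K}_{I',I}$, use the translation structure $\psi_{j,k}^{(t)}(x)=\psi_{j,0}^{(t)}(x-2^{-j}k)$ and a change of variables to push the offsets into the kernel argument, identify the resulting dependence on $(k,k')$ through the single vector $d=2^{\hat{j}-j'}k'-2^{\hat{j}-j}k$ with $[\widetilde{\mathbf{K}}]_d$ as in \eqref{eq:madness2}, and then read off the three branches of \eqref{eq:madness} by a case analysis on $j$ versus $j'$. The observation that the type $(t)$ enters only through the fixed reference wavelets, and the remarks on support and index ranges, are consistent with (indeed slightly more careful than) what the paper does.

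There is one point where your bookkeeping is internally inconsistent, and you should resolve it before writing the argument out. For $j<j'$ you correctly observe that the input translations $k$ land on every $2^{\delta}$-th point of the fine ($j'$-level) grid; but the operation this produces is $\sum_k [\widetilde{\mathbf{K}}]_{k'-2^{\delta}k}\,[W_j^{(t)}]_k$, i.e.\ zero-insertion upsampling of the \emph{input} followed by convolution, $\widetilde{\mathbf{K}}\ast\mathscr{U}^{\delta}(W_j^{(t)})$ — not decimation of the output. Decimation of the output, $\mathscr{D}^{\delta}(\widetilde{\mathbf{K}}\ast W_j^{(t)})$, arises in the opposite case $j>j'$, where it is the output offset $k'$ that gets multiplied by $2^{\delta}$ in $d$. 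So the mechanism you describe actually assigns $\mathscr{U}$ and $\mathscr{D}$ to the two off-diagonal branches in the opposite way from the conclusion you state; a quick dimension count confirms this (for $j<j'$ the output subband is the larger one, so it cannot be obtained by downsampling a convolution of the smaller input). Note that the paper's displayed formula \eqref{eq:madness} and its proof suffer from the same swap (via an inconsistent use of $\delta^{\pm}$), while the worked example in \cref{subsec:example} uses upsample-then-convolve for $j=3\rightarrow j'=4$; your writeup should settle on the dimensionally consistent assignment and keep the $\delta^{\pm}$ bookkeeping coherent throughout.
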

\begin{proof}
Let $I,I'$ be identified by $(j,(t),k)$ and $(j',(t'),k')$, respectively. Then,
\[
\begin{aligned}
    \left[\mathbf{K}\right]_{I',I} & = \int_{\R^2}\int_{\R^2} K(x-x') \ \psi_{j,k'}^{(t')}(x') \ \psi_{j,k}^{(t)}(x) dx dx' \\
    & = \int_{\R^2}\int_{\R^2} K(x-x') \ \psi_{j,0}^{(t')}(x'-2^{-j'}k') \ \psi_{j,0}^{(t)}(x-2^{-j}k) dx dx' \\
    & = \int_{\R^2}\int_{\R^2} K(x+2^{-j}k-x'-2^{-j'}k') \psi_{j,0}^{(t')}(x) \psi_{j,0}^{(t)}(x) dx dx' \\
    & = \int_{\R^2}\int_{\R^2} K(x-x'-2^{-\hat{j}}(2^{\delta^-} k' - 2^{\delta^+} k)) \psi_{j,0}^{(t')}(x) \psi_{j,0}^{(t)}(x) dx dx' \\
    & = \left[\widetilde{\mathbf{K}}_{j \rightarrow j'}^{(t) \rightarrow (t')}\right]_{d},
\end{aligned}
\]
where $\delta^+ = \max(0,j-j')$, $\delta^- = \max(0,j'-j)$, and $d = 2^{\delta^-} k' - 2^{\delta^+} k$. For the sake of ease, we use $\mathbf{K}$ instead of $\mathbf{K}_{j \rightarrow j'}^{(t) \rightarrow (t')}$, $\widetilde{\mathbf{K}} $ instead of $\widetilde{\mathbf{K}}_{j \rightarrow j'}^{(t) \rightarrow (t')}$, $w$ instead of $w_j^{(t)}$, $W$ instead of $W_{j}^{(t)}$. Moreover, we denote by $\mathcal{I}$ the set of indices $\mathcal{I}\subset \{1,\ldots,p\}$ belonging to the wavelet scale $j$ and type $(t)$.
\par
Consider first the case $j = j'$. Then $\delta = \delta^+ = \delta^- = 0$, and it holds
\[ 
\left[\mathbf{K}\right]_{I',I} = \left[\widetilde{\mathbf{K}} \right]_{d}, \qquad d = k' - k.
\]
Therefore,
\[
\begin{aligned}
\left[ \mathbf{K} w \right]_{I'} &= \sum_{I \in \mathcal{I}} \left[\mathbf{K}\right]_{I',I} w_I = \sum_{I \in \mathcal{I}} \left[\widetilde{\mathbf{K}} \right]_{k' - k(I)} w_I \\
&= \sum_{k_1 = -2^{j}}^{2^{j}}\sum_{k_2 = -2^{j}}^{2^{j}} \left[\widetilde{\mathbf{K}} \right]_{k'_1 - k_1, k'_2-k_2} W_{k_1,k_2} = [\mathbf{K} \ast W]_{I'}. 
\end{aligned}
\]
Let now $j < j'$. Then $\delta = \delta^+ >0$, $\delta^- = 0$, and
\[
\begin{aligned}
\left[ \mathbf{K} w \right]_{I'} &= \sum_{I \in \mathcal{I}} \left[\mathbf{K}\right]_{I',I} w_I = \sum_{I \in \mathcal{I}} \left[\widetilde{\mathbf{K}} \right]_{k' - 2^{\delta^+}k(I)} w_I \\
&= \sum_{k_1 = -2^{j'}}^{2^{j'}}\sum_{k_2 = -2^{j'}}^{2^{j'}} \left[\widetilde{\mathbf{K}} \right]_{k'_1 - 2^{\delta^+} k_1, k'_2- 2^{\delta^+}k_2} \mathscr{U}^{\delta^+}(W)_{2^{\delta^+}k_1,2^{\delta^+}k_2} = [\mathbf{K} \ast \mathscr{U}^\delta W]_{I'}. 
\end{aligned}
\]
Finally, let $j > j'$. Then $\delta^+ = 0$, $\delta = \delta^- > 0$, and
\[
\begin{aligned}
\left[ \mathbf{K} w \right]_{I'} &= \sum_{I \in \mathcal{I}} \left[\mathbf{K}\right]_{I',I} w_I = \sum_{I \in \mathcal{I}} \left[\widetilde{\mathbf{K}} \right]_{2^{\delta^-}k' - k(I)} w_I \\
&= \sum_{k_1 = -2^{j}}^{2^{j}}\sum_{k_2 = -2^{j}}^{2^{j}} \left[\widetilde{\mathbf{K}} \right]_{2^{\delta^-} k'_1 - k_1, 2^{\delta^-}k'_2 - k_2} W_{k_1,k_2} = [\mathscr{D}^{\delta}(\mathbf{K} \ast W)]_{I'}. 
\end{aligned}
\]
\end{proof}
\begin{rem}
 The most relevant consequence of \cref{prop:convolution} is a significant reduction of the coefficients required to describe the application of $A^*A$ as a function from $\R^p$ to $\R^p$. The standard representation, obtained by a matrix in $\R^{p \times p}$, involves indeed $p^2 = 2^{4J}$ parameters, whereas the representation via the convolutional filters $\widetilde{\mathcal{K}}_{j \rightarrow j'}^{(t) \rightarrow (t')}$ involves only $O(p)$ elements.
\end{rem}

This convolutional interpretation also reflects on the Neural Network architecture proposed in \eqref{eq:RESnet}: if we substitute the multiplication $K^{(n)} w^{(n-1)}$ by the operations encoded by \eqref{eq:madness} (decomposition of $w^{(n-1)}$ in wavelet subbands, upscaling, application of convolutional filters, downscaling), the parameters $\theta$ involved in the description of $K^{(n)}$ are reduced.
The representation of the linear operators $K^{(n)}$ through convolutions, upscaling and downscaling is a typical feature of CNNs: thus, by designing a CNN which reproduces exactly the operations reported in \eqref{eq:madness} and \eqref{eq:RESnet}, we can ensure that such a network is completely equivalent, for a suitable choice $\theta_0$ of the parameters, to the application of ISTA.

\subsection{A working example}
\label{subsec:example}

In order to better visualize the convolutional representation of ISTA reported in \eqref{eq:madness2}, we now provide a small example. Consider the case of $64 \times 64$ images, thus associated to $J = 6$ and $p = 2^{12}$. Create a wavelet basis consisting of three scales of wavelets, from $J_0 = 3$ to $J_1 = 5$. The resulting basis $\{\psi_I\}_{I=1}^p$ can be therefore split into $10$ subbands: $4$ associated to the scale $j=3$ (types: $(h)$, $(v)$, $(d)$ and $(f)$), $3$ associated to the scales $j=4$ (types: $(h)$, $(v)$, $(d)$) and $3$ with $j = 5$. Each subband consists of $2^{2j}$ elements. \\
The operator $A^*A$ is represented in the wavelet basis $\{\psi_I\}$ by means of a matrix $\mathbf{K} \in \R^{p \times p}$. According to \cref{subsec:ista_as_cnn}, the following procedure is equivalent to apply the matrix $\mathbf{K}$ on a vector $w \in \R^p$ (representing the wavelet transform of an image):
\begin{enumerate}
    \item First, split the vector $w$ into its $10$ wavelet subbands, each of which identified by a scale $j$ and a type $(t)$. This operation is depicted in \cref{fig:example1}. The vector $w_j^{(t)} \in \R^{2j}$ can also be interpreted as a matrix $W_j^{(t)} \in \R^{j \times j}$. The element $[W_j^{(t)}]_{d} = [W_j^{(t)}]_{(d_1,d_2)}$ is the component associated to the basis function $\psi_{j,d}^{(t)}(x) = 2^j \psi^{(t)}(2^j x_1 -d_1,2^j x_2 - d_2)$.
    \begin{figure}
    \centering
    \includegraphics[width=0.75\textwidth]{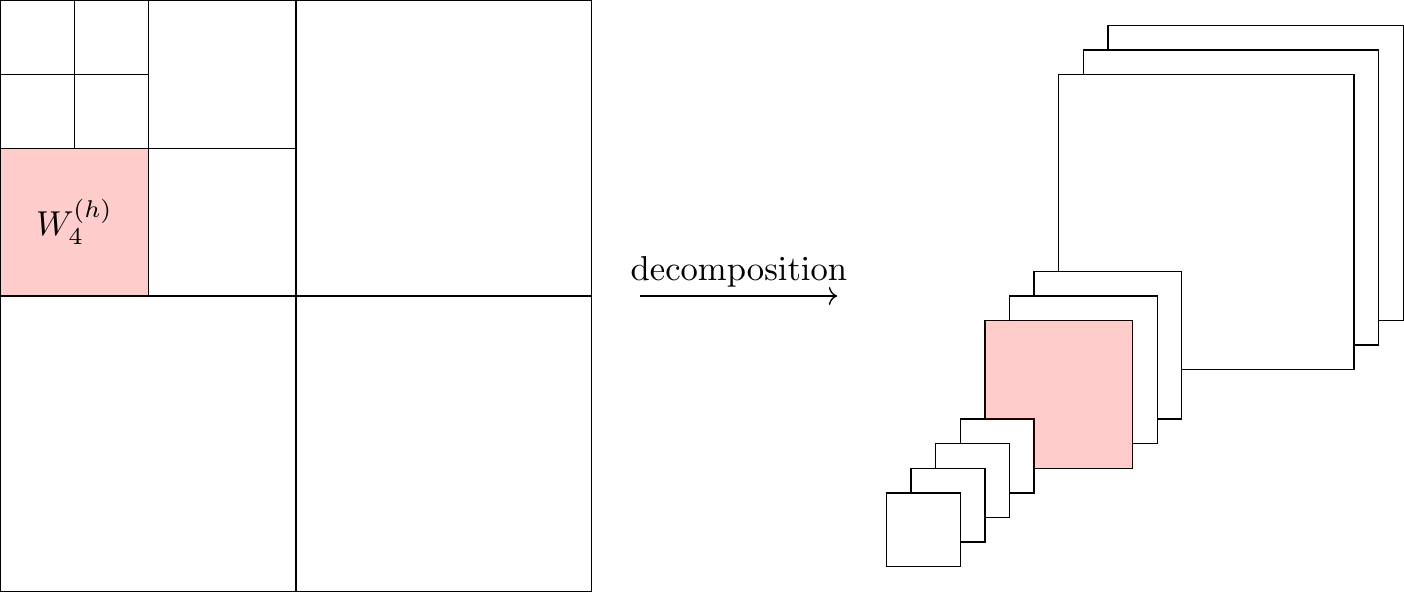}
    \caption{Interpretation of \eqref{eq:madness}. \\ Step 1: decompose the wavelet transform in subbands}
    \label{fig:example1}
\end{figure}
    \item Secondly, for each subband $j,(t)$, compute the $10$ vectors $\mathbf{K}_{j \rightarrow j'}^{(t) \rightarrow (t')} w_j^{(t)}$, the contributions of $w_j^{(t)}$ on the subband $j',(t')$ of the vector $\mathbf{K} w$. Each matrix $\mathbf{K}_{j \rightarrow j'}^{(t) \rightarrow (t')}$ is a $2^{2j'} \times 2^{2j}$ block composing the matrix $\mathbf{K}$. According to \eqref{eq:madness}, this can be done by means of usampling, downsampling and convolution. Consider the case $j =J_0 = 3$:
    \begin{itemize}
        \item if $j'=3$, then $\hat{j}= 3$ and $\delta = 0$. Thus, if we compute the convolution of the $15\times 15$ filter $\mathbf{\widetilde{K}}_{3 \rightarrow 3}^{(t) \rightarrow (t')}$ with the matrix $W_3^{(t)} \in \R^{8 \times 8}$, we get a $8 \times 8$ matrix representing the vector $\mathbf{K}_{3 \rightarrow 3}^{(t) \rightarrow (t')} w_3^{(t)} \in \R^{64}$ . 
        \item if $j'=4$, then we shall use the first variant in formula \eqref{eq:madness} with $\delta = 1$ (whereas in \eqref{eq:madness2} we have $\hat{j}=4$). To compute the $16 \times 16$ matrix associated to $\mathbf{K}_{3 \rightarrow 4}^{(t) \rightarrow (t')} w_3^{(t)}$, we must first upsample the matrix $W_3^{(t)}$ and then convolve it with the $31\times 31$ filter $\mathbf{\widetilde{K}}_{3 \rightarrow 4}^{(t) \rightarrow (t')}$.
        \item if $j' = 5$, then we use again the first variant of \eqref{eq:madness}, with $\delta = 2$; hence the matrix $W_3^{(t)}$ must be upsampled twice before being convolved with the $63 \times 63$ filter $\mathbf{\widetilde{K}}_{3 \rightarrow 5}^{(t) \rightarrow (t')}$.
    \end{itemize}
     Consider instead the case $j = 4$:
    \begin{itemize}
        \item if $j'=3$, then we need to use the third variant in \eqref{eq:madness} with $\delta = 1$ (and \eqref{eq:madness2} with $\hat{j}= 4$), which means we first compute the convolution between the $31\times 31$ filter $\mathbf{\widetilde{K}}_{4 \rightarrow 3}^{(t) \rightarrow (t')}$ and the matrix $W_4^{(t)} \in \R^{16 \times 16}$ and then to downscale it to recover the $8 \times 8$ matrix describing $\mathbf{K}_{4 \rightarrow 3}^{(t) \rightarrow (t')} w_4^{(t)}$.
        \item the case $j'=4$ is analogous to the $3 \rightarrow 3$ one, using $31 \times 31$ filters  $\mathbf{\widetilde{K}}_{4 \rightarrow 4}^{(t) \rightarrow (t')}$.
        \item the case $j'= 5$ is analogous to the $3 \rightarrow 4$ one: we first perform upsampling and then convolution.
    \end{itemize}
    Finally, for $j=J_1 = 5$,
    \begin{itemize}
        \item if $j'=3$, then we first compute the convolution between $\mathbf{\widetilde{K}}_{5 \rightarrow 3}^{(t) \rightarrow (t')} \in \R^{63 \times 63}$ and $W_5^{(t)} \in \R^{32 \times 32}$ and then downsample twice.
        \item if $j'=4$ we only downsample once, as in the case $4 \rightarrow 3$.
        \item if $j'= 5$, we only do convolution, as in the cases $3 \rightarrow 3$ and $4 \rightarrow 4$, but with $63 \times 63$ filters.
    \end{itemize}
    A graphical visualization of these operations is provided by  \cref{fig:example2}.
    \begin{figure}
    \centering
    \includegraphics[width=0.75\textwidth]{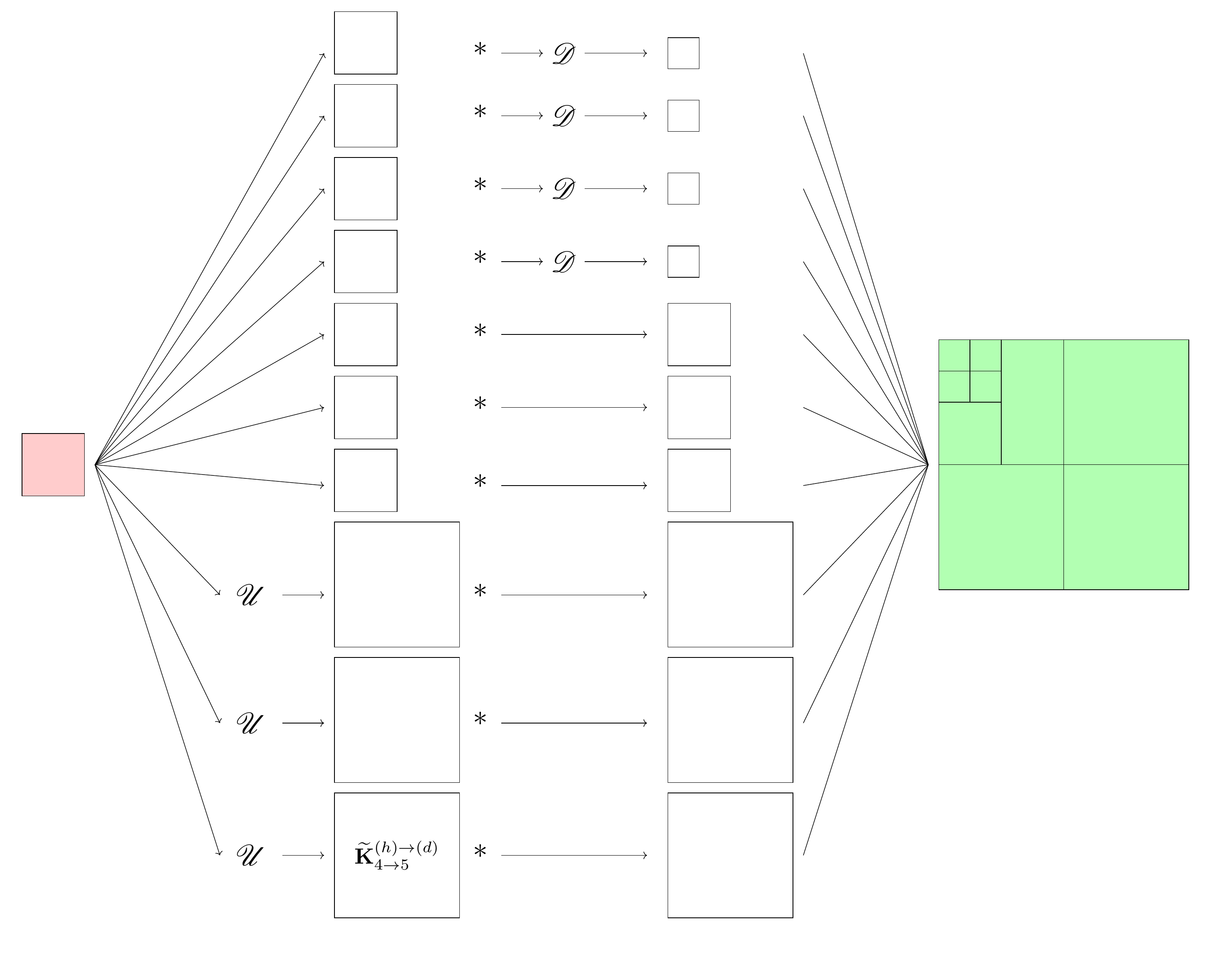}
    \caption{Interpretation of \eqref{eq:madness}. \\ Step 2: convolution, upsampling and downsampling}
    \label{fig:example2}
\end{figure}
\item The last step consists of collecting, for each subband $j',(t')$, all the contributions coming from the vectors $w_j^{(t)}$. Thanks to the previous step, among the $100$ computed matrices, all the ones associated to those contributions have dimensions $2^{j'} \times 2^{j'}$. By adding them up we recover the $j',(t')$ subband of the vector $\mathbf{K}w$ (see \cref{fig:example3}).
    \begin{figure}
    \centering
    \includegraphics[width=0.75\textwidth]{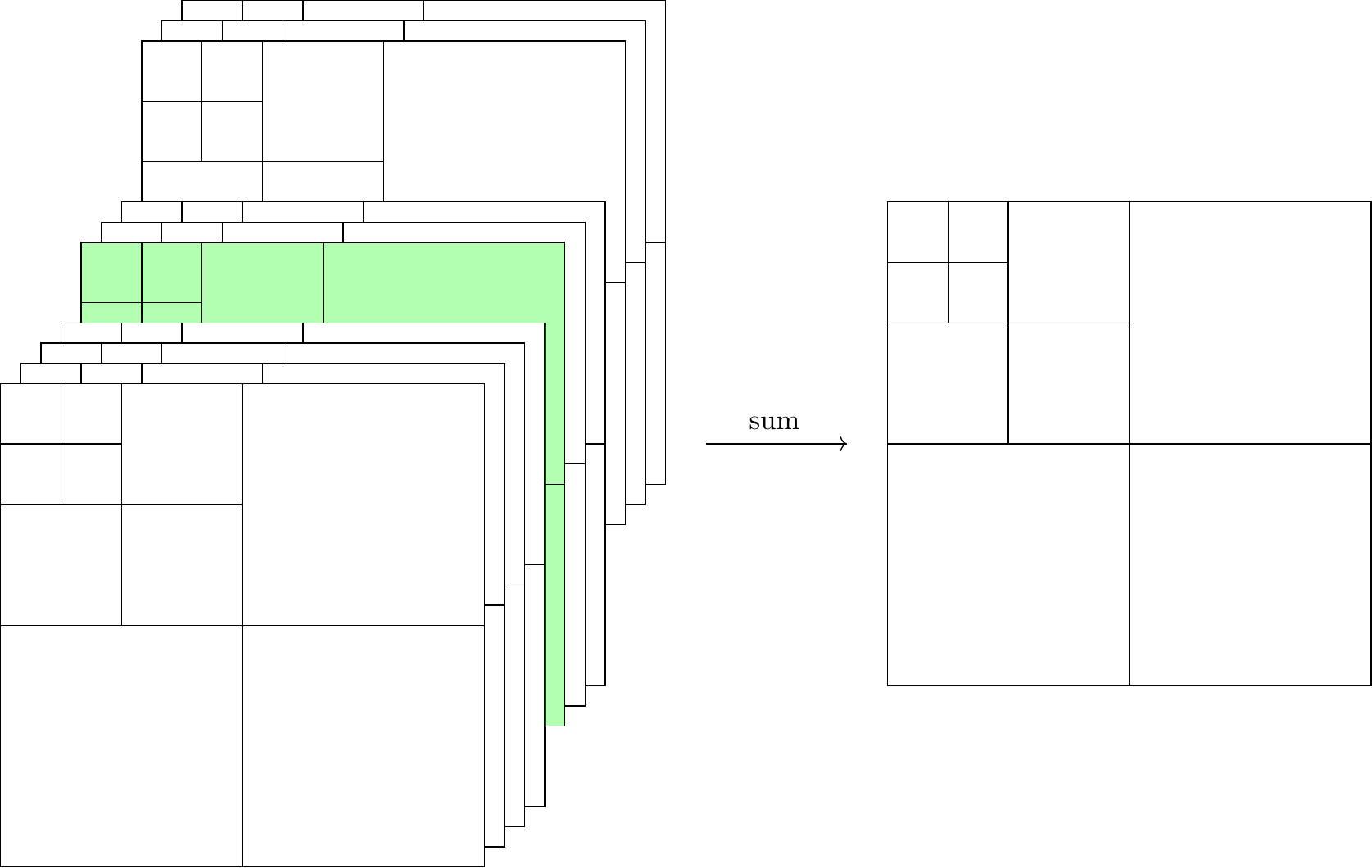}
    \caption{Interpretation of \eqref{eq:madness}. \\ Step 3: reassembling each wavelet subband.}
    \label{fig:example3}
    \end{figure}
\end{enumerate}

\subsection{On the possibility to use smaller filters}
\label{subsec:small_filters}

When designing a CNN, it is common practice to employ a large numbers of convolutional filters of small size. In the architecture determined by \eqref{eq:madness} and \eqref{eq:madness2}, the required number of filters is exactly $(3(J - J_0) + 1)^2$, and each part of the vector $w^{(n-1)}$ interacts only with $(3(J - J_0) + 1)$ of them. Moreover, the size of each filter must be equal to $(2^{j'+1}-1)(2^{j+1}-1)$. We now consider the effect of substituting such large filters with smaller ones.
\par
We would like to use filters of size $\tau \times \tau$, being $\tau = (2\xi + 1)$ and $\xi > 1$, obtained by extracting the central elements of the large filters $\widetilde{\mathbf{K}}_{j \rightarrow j'}^{(t) \rightarrow (t')}$. In particular, we define $\widetilde{\mathbf{K}}^\tau = (\widetilde{\mathbf{K}}_{j \rightarrow j'}^{(t) \rightarrow (t')})_\tau$, being $\tau = 2\xi + 1$, as 
\begin{equation}
    \left[\widetilde{\mathbf{K}}^\tau\right]_d = \left\{ 
    \begin{aligned}
    & \left[\widetilde{\mathbf{K}}_{j \rightarrow j'}^{(t) \rightarrow (t')}\right]_d \qquad &if\  \| d \|_{\infty} \leq \xi, \\
    & \ 0 \qquad &if\  \| d \|_{\infty} > \xi.
    \end{aligned}
    \right.
    \label{eq:gammathresh}
\end{equation}
We claim that this modification is equivalent to performing a perturbation of ISTA of the type treated in \cref{prop:conv3}, where the parameter $\rho$ is a suitable function of $\tau$. Although providing a detailed proof of this would entail cumbersome computation, we prove the most important result which is required to accomplish this task: we exhibit a bound on the coefficients of the filters which are discarded due to \eqref{eq:gammathresh}.
\par 
Such an estimate can be obtained by assuming further hypotheses on the operator $A$. In particular, suppose that $A^*A$ is a convolutional operator of kernel $K$ (as in \eqref{eq:convol}) and, in addition, that for $x\neq x'$ the kernel $K(x,x')$ is smooth and such that
\begin{equation}
K(x,x') \leq \frac{C}{|x-x'|} \qquad |\nabla_x K(x,x')| + |\nabla_{x'} K(x,x')| \leq \frac{C}{|x-x'|^2}.
    \label{eq:decayK}
\end{equation}
It is easy to verify that \eqref{eq:decay} is satisfied whenever $A^*A$ is a $\Psi$DO of order $-1$ with constant coefficients, that is
\[
A^*A f = \mathcal{F}^{-1}\left\{ a(\xi)  \mathcal{F} \left\{ f \right\}(\xi) \right\}, \qquad a(\xi) \sim \frac{1}{|\xi|} \ as\  \xi \rightarrow 0.
\]
Moreover, we need to assume a property related to the wavelet basis functions, known as first-order vanishing moment:
\begin{equation}
\int_{\R^2} \psi_I(x) dx = 0.
\label{eq:moment}
\end{equation}
Such property is verified even by the 2D Haar wavelet system, apart from the type $(f)$.
\begin{proposition}
Let the operator $A$ satisfy \eqref{eq:convol} and \eqref{eq:decayK}. Let the indices $I,I'$ denote two wavelets of scales $j,j'$, type $(t),(t')$ and offsets $k,k'$. Let $\psi_I$ and $\psi_{I'}$ satisfy \eqref{eq:moment} and let $d_{I,I'}$ be the distance between the supports of $\psi_I$ and $\psi_{I'}$. Whenever $d_{I,I'}>0$, it holds:
\begin{equation}
\mathbf{K}_{I,I'} = (A^*A \psi_I,\psi_{I'})_X \leq c \frac{2^{-2(j+j')}}{d_{I,I'}^3}
    \label{eq:decay}
\end{equation}
\label{prop:decay}
\end{proposition}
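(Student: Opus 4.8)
The plan is to establish \eqref{eq:decay} by the standard Calder\'on--Zygmund almost-orthogonality estimate, exploiting that \emph{both} $\psi_I$ and $\psi_{I'}$ carry the vanishing moment \eqref{eq:moment} and that the convolution kernel $K$ is smooth and decays like an order $-1$ kernel away from the diagonal. Fix points $c_I \in \supp \psi_I$ and $c_{I'} \in \supp \psi_{I'}$; recall that the supports of $\psi_I,\psi_{I'}$ are dyadic cubes (hence convex) of side $\lesssim 2^{-j}$ and $\lesssim 2^{-j'}$, that the normalisation $\psi_{j,k}^{(t)}(x) = 2^j\psi^{(t)}(2^jx-k)$ gives $\|\psi_I\|_{L^1(\R^2)} \lesssim 2^{-j}$ and $\|\psi_{I'}\|_{L^1(\R^2)} \lesssim 2^{-j'}$, and that every $x\in\supp\psi_I$, $x'\in\supp\psi_{I'}$ satisfies $|x-x'| \ge d_{I,I'} > 0$.

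First I would insert cancellation. Since $K(c_I-x')$, $K(x-c_{I'})$ and $K(c_I-c_{I'})$ are constant in $x$, in $x'$ and in both respectively, condition \eqref{eq:moment} applied to $\psi_I$ and to $\psi_{I'}$ turns \eqref{eq:convol} into
\begin{equation*}
\mathbf{K}_{I,I'} = \int_{\R^2}\int_{\R^2} \Delta(x,x')\,\psi_I(x)\,\psi_{I'}(x')\,dx\,dx', \qquad \Delta(x,x') = K(x-x') - K(c_I-x') - K(x-c_{I'}) + K(c_I-c_{I'}),
\end{equation*}
the mixed second difference of $K$. Parametrising $x(s) = c_I + s(x-c_I)$ and $x'(t) = c_{I'} + t(x'-c_{I'})$, $s,t\in[0,1]$, and applying the fundamental theorem of calculus in $s$ and then in $t$ gives the representation
\begin{equation*}
\Delta(x,x') = -\int_0^1\!\!\int_0^1 (x-c_I)^{\!\top}\,\big(\nabla^2 K\big)\big(x(s)-x'(t)\big)\,(x'-c_{I'})\,\,ds\,dt.
\end{equation*}

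Next I would bound the Hessian on the relevant region. By convexity of the supports, $x(s)\in\supp\psi_I$ and $x'(t)\in\supp\psi_{I'}$, so $|x(s)-x'(t)| \ge d_{I,I'}$; and since $K$ is smooth away from $0$ with its derivatives obeying the natural scaling $|\partial^\alpha K(z)| \lesssim |z|^{-1-|\alpha|}$, in particular $|\nabla^2 K(z)| \le C|z|^{-3}$ for $z\neq 0$. Hence $|\Delta(x,x')| \le C\,|x-c_I|\,|x'-c_{I'}|\,d_{I,I'}^{-3} \le C'\,2^{-j}2^{-j'}d_{I,I'}^{-3}$, and inserting this into the double integral together with the $L^1$ bounds on $\psi_I,\psi_{I'}$ yields
\begin{equation*}
|\mathbf{K}_{I,I'}| \le C'\,\frac{2^{-j}2^{-j'}}{d_{I,I'}^3}\,\|\psi_I\|_{L^1}\,\|\psi_{I'}\|_{L^1} \le c\,\frac{2^{-2(j+j')}}{d_{I,I'}^3},
\end{equation*}
which is \eqref{eq:decay} (a fortiori it bounds $\mathbf{K}_{I,I'}$ itself).

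The delicate point is exactly this second-order kernel estimate: the two factors $2^{-j}$ (one from the Taylor remainder $|x-c_I|$, one from $\|\psi_I\|_{L^1}$), and likewise the two factors $2^{-j'}$, force two derivatives onto $K$, so one genuinely needs $|\nabla^2 K|\lesssim|\cdot|^{-3}$, which is not literally part of \eqref{eq:decayK} (only $K$ and $\nabla K$ are controlled there). This does follow from the stated smoothness of $K$ together with its order $-1$ scaling, and is immediate for the constant-coefficient $\Psi$DOs of order $-1$ mentioned just before the proposition; alternatively one would simply append $|\nabla_x^2 K| + |\nabla_{x'}^2 K| + |\nabla_x\nabla_{x'}K| \le C|x-x'|^{-3}$ to \eqref{eq:decayK}, and the proof is unchanged. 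Two minor remarks: the argument uses \eqref{eq:moment} for \emph{both} wavelets, which for the 2D Haar system excludes the low-pass type $(f)$, but the proposition is stated precisely for $\psi_I,\psi_{I'}$ satisfying \eqref{eq:moment}, so no extra care is needed; and the hypothesis $d_{I,I'}>0$ is used exactly to keep the segments $x(s)-x'(t)$ away from the singularity of $K$, legitimising both the Hessian bound and the use of Fubini.
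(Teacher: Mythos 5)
Your proof is correct and follows essentially the same route as the paper's: both exploit the vanishing moments of $\psi_I$ and $\psi_{I'}$ to replace $K$ by a (mixed) second difference, gaining the factors $2^{-j}2^{-j'}$ from the Taylor remainders and another $2^{-j}2^{-j'}$ from the $L^1$ norms of the wavelets, against a $d_{I,I'}^{-3}$ decay coming from two derivatives of the kernel. Your explicit remark that this requires a Hessian-type bound $|\nabla^2 K(z)|\lesssim |z|^{-3}$ not literally contained in \eqref{eq:decayK} is well taken --- the paper's own proof uses the same second-order gain implicitly (in the passage from $|x-\xi'|^{-2}$ to $|x-x_0|\,|\xi-\xi'|^{-3}$) without stating that extra hypothesis.
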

We remark that the decay reported in \eqref{eq:decay} closely resembles formula (9.22) in \cite{dahmen1997wavelet} (according to the choice $n = 2$, $\tilde{d}=1$, $r = 2t = -1$) and with minor changes also formula (4.26) in \cite{beylkin1991fast} (with $M = 2$).
\begin{proof}
According to \eqref{eq:decayK}, and to \eqref{eq:moment}, 
for any choice of $x_0 \in \supp{\psi_{I}}$, $x_0' \in \supp{\psi_{I'}}$ there exists two points $\xi, \xi'$ in the same supports such that
\[
\begin{aligned}
\mathbf{K}_{I,I'} &= \int_{\R^2} \int_{\R^2} (K(x,x')-K(x,x_0'))\psi_I(x)\psi_{I'}(x')dx dx' \\
&\leq \int_{\R^2} \int_{\R^2} |\nabla_x K(x,\xi')| |x'-x_0'| \psi_I(x)\psi_{I'}(x')dx dx' \\
&\leq C \int_{\R^2} \int_{\R^2}\frac{|x'-x_0'|}{|x-\xi'|^2} \psi_I(x)\psi_{I'}(x')dx dx' \\
&\leq C \int_{\R^2} \int_{\R^2} \frac{|x-x_0||x'-x_0'|}{|\xi-\xi'|^3} \psi_I(x)\psi_{I'}(x')dx dx'. 
\end{aligned}
\]
The quantity $|\xi-\xi'|$ is bounded from below by $d_{I,I'}$ by definition. Moreover, $|x-x_0| \leq \diam(\supp{\psi_I}) = c 2^{-j}$, and finally $\int_{\R^2} \psi_{I}(x) \leq 2^{j} |\supp{\psi_I}| = 2^{-j}$ (analogous arguments hold on $I'$).
\end{proof}
In view of \eqref{eq:decay} and of \eqref{eq:madness2}, we can easily obtain a bound on the elements of the convolutional filters:
\[
\left[\widetilde{\mathbf{K}}_{j \rightarrow j'}^{(t) \rightarrow (t')}\right]_d \leq c \frac{2^{-\hat{j}}}{(\| d \|_{\infty}-1)^3},
\]
provided that $\| d \|_{\infty} > 1$. This result, together with \eqref{eq:madness}, allows to obtain an explicit bound (in the form of \eqref{eq:conv4}) on the perturbation induced by the thresholding \eqref{eq:gammathresh}.

\section{$\Psi$DONet: formulation and theoretical results}
\label{sec:algo_theo}

In this section we introduce a reconstruction algorithm for sparsity-promoting regularization based on CNNs, which leads to a novel network architecture defined $\Psi$DONet. We report the general idea inspiring such a technique, taking advantage of the theoretical results obtained in  \cref{sec:unrolled_ista_theo} and providing a comprehensive interpretation. Eventually, we provide a theoretical result ensuring the convergence of the proposed algorithm.

\subsection{$\Psi$DONet: a network to learn pseudodifferential operators}
\label{subsec:psidonet}

Inspired by the results of the \cref{sec:unrolled_ista_theo}, if the operator $A^*A$ is of convolutional type, we define a reconstruction algorithm by designing a CNN of $N$ layers, each of which is described by \eqref{eq:RESnet}. In particular, the bias vectors appearing in \eqref{eq:RESnet} are $b^{(n)} = W A_\vN^*m$ for each $n$, whereas the linear operators $K^{(n)}$ are interpreted as a combination of upscaling, downscaling and convolution as described in \eqref{eq:madness}. As shown in \cref{prop:convolution}, if the entries of the convolutional filters are selected as is \eqref{eq:madness2}, this procedure is equivalent to performing $N$ iterations of ISTA. Instead, the key idea of the proposed algorithm is to split the convolutional filters into two parts: a central $\tau \times \tau$ submatrix (where $\tau$ is a predefined hyper-parameter of the algorithm) and the outer frame. For each one of the $3(J-J_0)+1$ filters required by each layer, we suppose that the entries in the external frame are specified according to \eqref{eq:madness2}, whereas the central entries are considered as parameters, to be learned throughout the training process. Such parameters are collected in a vector $\theta_n$ (related to the $n$-th layer) and ultimately stored in the vector $\theta$, possibly together with other learnable parameters. The obtained network is denoted as $f_\theta^\tau$: the aim of a CNN-based algorithm is to find a parameter $\theta$ such that the network is a good approximation of the solution map of our inverse problem, taking as an input the measurements $m$ and giving as an output the solution $w^\dag = Wu^\dag$.

It is evident that, among the possible choices of the optimal parameter, the network could select the vector $\theta_0$ which exactly replicates the ISTA iterations (it is the one for which, in every layer, also the central entries of each filter are specified by \eqref{eq:madness2}). Nonetheless, if the optimal choice of $\theta$ differs from $\theta_0$, it means that the network is learning something more than the ISTA iterations associated to the operator $A^*A$. This can be meaningfully interpreted as follows: in each layer, the network $f_\theta^\tau$ applies the filters associated to an operator whose kernel is $K_0 + K_1$, where $K_0$ is the kernel of $A^*A$ and $K_1$ is the kernel of another, \textit{learned}, operator. Since the difference will only occur in the central elements of the convolutional filters, according to the analysis of \cref{subsec:small_filters}, we can argue that the learned operator is indeed a suitable approximation of a \textit{pseudodifferential} operator. This finally allows to motivate the name we propose for this novel CNN-based reconstruction algorithm: \textbf{$\Psi$DONet}. 

There are several reasons for which the learning process could attain a better result than the one provided by ISTA. Indeed, a better choice of the parameters allows to reduce numerical errors induced by the discrete representation of $A^*A$, which might have a significant effect due to the error propagation among the iterations. Moreover, we might also mitigate model errors in the definition of the operator $A$ itself. Finally, this perturbation could provide a representation of $A^*A$ with respect to a slightly different basis, which allows to better satisfies the sparsity assumption on the solutions. For such reasons, the use of $\Psi$DONet is specifically recommended whenever the original operator $A^*A$ is a $\Psi$DO itself. Indeed, its kernel representation by means of convolutional filters might benefit from learned corrections in all its most important entries: namely, the central ones.

We will show that $\Psi$DONet is also highly recommended for FIOs: in this case, the largest entries of the convolutional filters representing $A^*A$ are located in the center and along some lines, possibly stretching away from the center. This is the case of the limited-angle Radon transform (deeply analyzed in the following sections), which is associated with the kernel
\[
K(x,y) = \frac{1}{|x-y|}\chi_\Gamma(x-y), 
\]
being $\chi_\Gamma$ the indicator function of the cone in $\R^2$ between the angles $-\Gamma$ and $\Gamma$. As reported in  \cref{sec:in_practice}, the convolutional filters related to this operator show large values only in the central elements and along two lines having the same slope of the ones delimiting the cone. This provides a curious shape, allowing us to rename them as \textit{bowtie} filters. We will show that the application of $\Psi$DONet on this operator, providing learned corrections only to the center of the bowties, is still extremely effective. 

\subsection{A convergence result}
\label{subsec:cnn_converg}
We now provide a theoretical result which holds true for the $\Psi$DONet algorithm, regardless of its specific implementation. In analogy with the approach of \cite{de2019deep}, we introduce the following probabilistic approach. Let $\mathcal{B} = \{ u \in X_p: \ Wu \in \ell^1(\N); \| Wu \|_{\ell^1} \leq C_\mathcal{B} \}$ and $u$ be a random variable having a probability distribution $\mu$ on the space $\mathcal{B}$. We can consider $\mu$ as some prior information on the solution of the inverse problem.
Moreover, let $\epsilon$ be a random variable in $Y_q$ with distribution $\nu$, which models the error on the measurements. Assume that $u$ and $\epsilon$ are independent: hence, the measurement $m = A_\vN u + \epsilon$ is a random variable on the product space $X_p \times Y_q$ with density $A_*\mu \otimes \nu$, where $A_*\mu$ denotes the pushforward of $\mu$ to $Y$ via the linear map $A$. In order to measure the performance of the network $f_\theta^\tau$, we introduce the loss function associated to the network $f_\theta^\tau$ as:
\begin{equation}
\mathcal{L}(\theta;\mu,\nu) = \mathbb{E}_{u \sim \mu, \epsilon \sim \nu} \left[ \| f_\theta^\tau(A_\vN u + \epsilon) - Wu \|^2_{\ell^2} \right]
    \label{eq:loss}
\end{equation}
We define the optimal Neural Network as the one associated to $\theta^*$ satisfying:
\begin{equation}
\theta^* = \argmin_{\theta \in \Theta} \mathcal{L}(\theta; \mu, \nu).
    \label{eq:thetastar}
\end{equation}
Before focusing on the properties of the optimal network $f^\tau_{\theta^*}$, it is convenient to recall that, for a specific choice of parameters $\theta_0$, the network $f_{\theta_0}^\tau$ is equivalent to performing $N$ iterations of (modified) ISTA. The following rough estimate will be useful:
\begin{lemma}
There exist two constants $k_1, k_2 > 0$ (depending on $C_{\mathcal{B}}, L,\rho$, $\|A_\vN\|$, $w^{(0)}$,$N$) such that, for all $u \in \mathcal{B}$ and $\epsilon \in Y_q$, 
\begin{equation}
\| f_{\theta_0}^\tau(A_\vN u + \varepsilon) - W u\|_{\ell^2} \leq k_1 + k_2 \| \epsilon \|_{Y_q}
    \label{eq:rough}
\end{equation}
\label{lem:rough}
\end{lemma}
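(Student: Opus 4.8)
The plan is to bound $\| f_{\theta_0}^\tau(A_\vN u + \epsilon) - Wu\|_{\ell^2}$ by inserting the exact sparse solution $w^\dag = Wu$ (which belongs to $W_p$ since $u \in X_p$) and the ISTA/perturbed-ISTA output, and then estimating each piece. Recall that $f_{\theta_0}^\tau(m)$ is, by construction, the $N$-th iterate $\wt{N}$ of the modified ISTA \eqref{eq:ISTAZ} with data $m = A_\vN u + \epsilon$, where the perturbation operator $Z$ (the small-filter truncation of $W A_\vN^* A_\vN W^*$) satisfies \eqref{eq:AZ} with amplitude $\rho$, and $\wt{0} = w^{(0)}$. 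So I would write
\begin{equation*}
\| f_{\theta_0}^\tau(A_\vN u + \epsilon) - Wu \|_{\ell^2}
\;\leq\; \| \wt{N} - \wt{0} \|_{\ell^2} + \| \wt{0} - Wu \|_{\ell^2},
\end{equation*}
and the second term is trivially bounded by $\| w^{(0)} \|_{\ell^2} + \| Wu \|_{\ell^2} \leq \| w^{(0)}\|_{\ell^2} + C_{\mathcal{B}}$ using $u \in \mathcal{B}$. The real work is the first term.

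For $\| \wt{N} - \wt{0}\|_{\ell^2}$ I would exploit the nonexpansivity of the soft-thresholding operator $\mathcal{S}_{\lambda/L}$ (the same tool used for Proposition~\ref{prop:conv3}): writing $\wt{n} = \mathcal{T}_Z(\wt{n-1})$, one telescopes $\wt{N} - \wt{0} = \sum_{n=1}^N (\wt{n} - \wt{n-1})$ and controls each increment $\| \wt{n} - \wt{n-1}\|_{\ell^2} = \| \mathcal{T}_Z(\wt{n-1}) - \mathcal{T}_Z(\wt{n-2}) \|_{\ell^2} \leq \| (I - \tfrac{1}{L}Z)(\wt{n-1} - \wt{n-2})\|_{\ell^2}$, which since $L \geq \| W A_\vN^* A_\vN W^*\|$ and $Z$ is a $\rho$-perturbation of it gives a Lipschitz constant $\leq 1 + \rho/L$ on the increments. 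Hence $\| \wt{n} - \wt{n-1}\|_{\ell^2} \leq (1+\rho/L)^{n-1} \| \wt{1} - \wt{0}\|_{\ell^2}$, and summing over $n \leq N$ yields $\| \wt{N} - \wt{0}\|_{\ell^2} \leq C(N,\rho,L)\, \| \wt{1} - \wt{0}\|_{\ell^2}$. It remains to bound the single first step: $\wt{1} - \wt{0} = \mathcal{S}_{\lambda/L}(\wt{0} - \tfrac{1}{L}Z\wt{0} + \tfrac{1}{L} W A_\vN^* m) - \wt{0}$, and using $\| \mathcal{S}_{\lambda/L}(v) - \mathcal{S}_{\lambda/L}(0)\|_{\ell^2} \leq \| v\|_{\ell^2}$ together with $\mathcal{S}_{\lambda/L}(0) = 0$ this is $\leq \| \wt{0}\|_{\ell^2}(1 + \| Z\|/L) + \tfrac{1}{L}\| W A_\vN^* \| \, \| A_\vN u + \epsilon\|_{Y}$, which in turn is $\leq \| w^{(0)}\|_{\ell^2}(1+\|Z\|/L) + \tfrac{1}{L}\| W A_\vN^*\| ( \| A_\vN\| \| u\|_X + \| \epsilon\|_{Y_q})$, and $\| u\|_X \leq C_{\mathcal{B}}$ on $\mathcal{B}$. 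Collecting the $\| \epsilon\|_{Y_q}$-free terms into $k_1$ and the coefficient of $\| \epsilon\|_{Y_q}$ into $k_2$ gives \eqref{eq:rough}, with $k_1, k_2$ depending only on $C_{\mathcal{B}}, L, \rho, \| A_\vN\|, w^{(0)}, N$ as claimed (note $\| Z\| \leq \| W A_\vN^* A_\vN W^*\| + \rho \leq \| A_\vN\|^2 + \rho$, keeping the dependence within the stated parameters).

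The only mild subtlety — and the one I would flag as the "obstacle" — is making sure every constant genuinely depends only on the quantities listed and not, say, on $w^\dag$ through the ISTA convergence rate: here that is automatic precisely because we do \emph{not} invoke convergence of $\wt{N}$ to a minimizer (that would bring in $a$ and $c_3$); instead we only use the crude growth estimate above, which is why the lemma is called "rough." A second small point is bookkeeping the constant $C(N,\rho,L) = \sum_{n=1}^N (1+\rho/L)^{n-1} \leq N(1+\rho/L)^N$, which is finite and depends only on $N,\rho,L$. Everything else is routine, and the argument is essentially self-contained given the nonexpansivity of $\mathcal{S}_{\lambda/L}$ and the normalization $\mathcal{S}_{\lambda/L}(0)=0$.
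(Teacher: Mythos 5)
Your proposal is correct and takes essentially the same route as the paper: a crude geometric-growth bound over the $N$ layers, driven by the nonexpansivity of $\mathcal{S}_{\lambda/L}$ (so that $\|\mathcal{S}_{\lambda/L}(v)\|_{\ell^2}\leq\|v\|_{\ell^2}$) together with $\|Wu\|_{\ell^2}\leq C_{\mathcal{B}}$ and the operator-norm bounds on $Z$ and $A_\vN$. The only (cosmetic) difference is the bookkeeping: the paper bounds $\|f_{\theta_0}^\tau(A_\vN u+\epsilon)\|_{\ell^2}$ directly by iterating $\|\wt{n}\|_{\ell^2}\leq \kappa\|\wt{n-1}\|_{\ell^2}+\tfrac{1}{L}\|WA_\vN^*m\|$ with $\kappa=1+(\|A_\vN^*A_\vN\|+\rho)/L$, whereas you telescope the increments $\wt{n}-\wt{n-1}$; both land on constants depending only on the stated parameters (your first-step bound drops an extra $\|\wt{0}\|_{\ell^2}$ term, but that is absorbed into $k_1$ and does not affect the claim).
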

\begin{proof}
According to \eqref{eq:RESnet}, defining $\kappa = 1 + \frac{\| A_\vN^*A_\vN \|+\rho}{L}$, we get
\[
\begin{aligned}
    \| f_{\theta_0}^\tau(A_\vN u + \varepsilon) - W u\|_{\ell^2} & \leq \| f_{\theta_0}^\tau(A_\vN u + \varepsilon)\|_{\ell^2} + \| u \|_{X_p} \\
    & \leq \kappa^N \| w^{(0)} \| + \left( 1 + \kappa + \ldots + \kappa^{N-1}\right) \| A_\vN u + \epsilon\|_{Y_q} +  C_{\mathcal{B}}\\
    & \leq \kappa^N \| w^{(0)} \| +  C_{\mathcal{B}} + \frac{\kappa^N - 1}{\kappa - 1}(\|A_\vN\| C_\mathcal{B} + \| \epsilon \|_{Y_q} ) 
\end{aligned}
\]
\end{proof}
We now focus on the case in which $\epsilon$ is a Gaussian random vector, i.e., $\nu = N(0,\sigma^2 I_q)$, being $I_q$ the identity matrix in $\R^{q \times q}$. In this case, it is useful to recall that
\begin{equation}
    \mathbb{E}[\| \varepsilon \|_{Y_q}^2] = q \sigma^2, \qquad \mathbb{E}[\| \varepsilon \|_{Y_q}^4] \leq 3 q^2 \sigma^4.
    \label{eq:moments}
\end{equation}
In addition to \cref{lem:rough}, we can rely on the results reported in \cref{sec:theo_background} (and in particular on \cref{thm:superconv}) to provide a more refined estimate. Indeed, we observe that the convergence result reported in \eqref{eq:superconv} is independent of the choice of $\epsilon = m - A u^\dag$, as long as $\| \epsilon \| \leq \delta$. Moreover, the constant $c_5$ appearing in \eqref{eq:superconv} can depend on $u^\dag$, but only through an upper bound on $\|w^\dag\|_{\ell^1}$ (see, in particular, \cite[Theorem 1]{flemming2018injectivity} and to \cite[Theorem 25]{BNPS} for the constant derived from \cref{prop:conv2} and \cref{prop:conv3}, respectively). This allows us to conclude that
\begin{lemma}
Suppose $\epsilon \sim N(0,\sigma^2 I_q)$ and let $\delta = \sigma^{1/\eta}$, being $\eta>1$. There exists $\sigma_0 > 0$ such that, for $\sigma < \sigma_0$, then for every $u \in \mathcal{B}$
\[
\mathbb{E}_{\varepsilon \sim \nu}\left[\| f_{\theta_0}^\tau(Au + \epsilon) - Wu \|_{\ell^2}^2\right] \leq c_5^2 \delta^2 + 2\sqrt{2}k_1^2 \delta^{\eta - 1} + 2\sqrt{6}k_2^2 q \delta^{3\eta - 1}.
\]
If, moreover, $\eta = 3$ and $\sigma < \min\{\sigma_0,q^{-1/2}\}$,
then there exists a constant $c^{*}$ (depending on $c_5, k_1,k_2$) such that
\begin{equation}
\mathbb{E}_{\varepsilon \sim \nu}\left[\| f_{\theta_0}^\tau(Au + \epsilon) - Wu \|_{\ell^2}^2\right] \leq c^{*} \delta^2.
\label{eq:fine}    
\end{equation}
\label{lem:fine}
\end{lemma}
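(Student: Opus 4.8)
The plan is to interpolate between the two estimates already at our disposal for the iterate produced by $f_{\theta_0}^\tau$: the crude bound $\|f_{\theta_0}^\tau(A_\vN u+\epsilon)-Wu\|_{\ell^2}\le k_1+k_2\|\epsilon\|_{Y_q}$ of \cref{lem:rough}, valid for every realization of $\epsilon$, and the sharp bound $\|f_{\theta_0}^\tau(A_\vN u+\epsilon)-Wu\|_{\ell^2}\le c_5\delta$, valid on the event $\{\|\epsilon\|_{Y_q}\le\delta\}$. Before combining them, I would justify that \cref{thm:superconv} genuinely applies with a constant $c_5$ that is \emph{uniform} over $\mathcal{B}$: by construction $f_{\theta_0}^\tau(A_\vN u+\epsilon)$ is exactly the $N$-th iterate $\wt{N}$ of the perturbed ISTA \eqref{eq:ISTAZ} run on the datum $m=A_\vN u+\epsilon=A_\vN u^\dag+\epsilon$ (here $u^\dag=u\in X_p$ satisfies the sparsity assumption \eqref{eq:sparse}), started from $w^{(0)}$, with perturbation amplitude $\rho=\rho(\tau)$ controlled by the filter truncation \eqref{eq:gammathresh}; the hyperparameters must be chosen so that $N\ge\log_a\delta$ and $\rho(\tau)\le\delta/N$, which fixes the threshold $\sigma_0$ (equivalently, a smallness bound on $\delta=\sigma^{1/\eta}$); and the constant $c_5$ in \eqref{eq:superconv} depends on $u$ only through an upper bound on $\|Wu\|_{\ell^1}\le C_{\mathcal{B}}$, so it can be chosen the same for all $u\in\mathcal{B}$.

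Granted this, one has the pointwise inequality
\[
\|f_{\theta_0}^\tau(A_\vN u+\epsilon)-Wu\|_{\ell^2}^2 \;\le\; c_5^2\delta^2 \;+\; \bigl(k_1+k_2\|\epsilon\|_{Y_q}\bigr)^2\,\mathbf{1}_{\{\|\epsilon\|_{Y_q}>\delta\}}
\]
(on $\{\|\epsilon\|_{Y_q}\le\delta\}$ the first term dominates, on the complement the second). Using $(k_1+k_2 t)^2\le 2k_1^2+2k_2^2 t^2$ and taking expectation, it then remains to estimate $\mathbb{E}[\mathbf{1}_{\{\|\epsilon\|_{Y_q}>\delta\}}]=\mathbb{P}(\|\epsilon\|_{Y_q}>\delta)$ and $\mathbb{E}[\|\epsilon\|_{Y_q}^2\,\mathbf{1}_{\{\|\epsilon\|_{Y_q}>\delta\}}]$. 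For the former I would apply Markov's inequality to $\|\epsilon\|_{Y_q}^2$; for the latter, Cauchy--Schwarz followed by Markov. Inserting the Gaussian moments \eqref{eq:moments}, namely $\mathbb{E}[\|\epsilon\|_{Y_q}^2]=q\sigma^2$ and $\mathbb{E}[\|\epsilon\|_{Y_q}^4]\le 3q^2\sigma^4$, and substituting $\sigma=\delta^\eta$ converts all $\sigma$-powers into $\delta$-powers and produces the three-term estimate displayed in the statement, with the stated numerical constants.

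For the refined estimate \eqref{eq:fine}, set $\eta=3$: then the middle term already scales as $\delta^{\eta-1}=\delta^2$, while the last term carries the factor $q\,\delta^{3\eta-1}=q\,\delta^{8}$, which is $\le\delta^2$ precisely when $q\,\delta^{6}\le1$, i.e. when $\sigma=\delta^{3}\le q^{-1/2}$. Hence, under $\sigma<\min\{\sigma_0,q^{-1/2}\}$ all three summands are $O(\delta^2)$ and can be collected into a single constant $c^{*}=c^{*}(c_5,k_1,k_2)$.

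The step I expect to be the main obstacle is the first one: carefully reconciling the \emph{fixed} architecture of $f_{\theta_0}^\tau$ (fixed depth $N$, fixed filter size $\tau$, hence fixed $\rho$) with the $\delta$-dependent prescriptions $N\ge\log_a\delta$, $\rho=\delta/N$ demanded by \cref{thm:superconv}, and verifying that $c_5$ can genuinely be taken uniform over the whole class $\mathcal{B}$. The remaining estimates are entirely routine second- and fourth-moment computations for a Gaussian vector.
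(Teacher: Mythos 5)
Your overall skeleton coincides with the paper's: split the expectation over $\{\|\epsilon\|_{Y_q}\le\delta\}$ and its complement, apply the uniform $c_5\delta$ bound from \cref{thm:superconv} on the good event (your preliminary discussion of why $c_5$ can be taken uniform over $\mathcal{B}$, via the $\ell^1$-bound $C_\mathcal{B}$, and of how $N$, $\rho(\tau)$ and $\sigma_0$ must be reconciled, matches the remarks the paper makes just before the lemma), and control the tail using \cref{lem:rough} and the Gaussian moments \eqref{eq:moments}. The gap is in the tail estimate. You first expand $(k_1+k_2\|\epsilon\|)^2\le 2k_1^2+2k_2^2\|\epsilon\|^2$ and then bound $\nu(\{\|\epsilon\|>\delta\})$ by Markov applied to $\|\epsilon\|^2$. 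Since $\mathbb{E}[\|\epsilon\|_{Y_q}^2]=q\sigma^2$, this gives $2k_1^2\,\nu(\{\|\epsilon\|>\delta\})\le 2k_1^2\,q\sigma^2/\delta^2=2k_1^2\,q\,\delta^{2\eta-2}$, which is not the displayed term $2\sqrt{2}k_1^2\delta^{\eta-1}$; worse, for $\eta=3$ the hypothesis $\sigma<q^{-1/2}$ only yields $q\le\delta^{-6}$, so $q\,\delta^{2\eta-2}=q\,\delta^{4}\le\delta^{-2}$, and this term is not $O(\delta^2)$. The cross term suffers the same defect: Cauchy--Schwarz plus Markov gives $2k_2^2\,\mathbb{E}[\|\epsilon\|^2\mathbf{1}_{\{\|\epsilon\|>\delta\}}]\le 2\sqrt{3}\,k_2^2\,q^{3/2}\delta^{3\eta-1}$, with $q^{3/2}$ in place of $q$, again not absorbed by $q\le\delta^{-2\eta}$. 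So your claim that the computation "produces the three-term estimate displayed in the statement, with the stated numerical constants" does not hold, and the route as described does not close.

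The paper instead applies H\"older once to the \emph{whole} tail integral,
\[
\int_{\{\|\epsilon\|>\delta\}}\|f_{\theta_0}^\tau(Au+\epsilon)-Wu\|_{\ell^2}^2\,d\nu
\;\le\;\nu(\{\|\epsilon\|>\delta\})^{1/2}\left(\mathbb{E}\left[\|f_{\theta_0}^\tau(Au+\epsilon)-Wu\|_{\ell^2}^4\right]\right)^{1/2},
\]
bounds the fourth moment by $8k_1^4+8k_2^4\,\mathbb{E}[\|\epsilon\|^4]\le 8k_1^4+24k_2^4q^2\sigma^4$, and uses the tail bound $\nu(\{\|\epsilon\|>\delta\})\le\sigma^2/\delta^2$, so that the prefactor multiplying everything is $\sigma/\delta=\delta^{\eta-1}$ with no extra power of $q$; this is precisely what yields $2\sqrt{2}k_1^2\delta^{\eta-1}+2\sqrt{6}k_2^2q\,\delta^{3\eta-1}$ and lets the condition $\sigma<q^{-1/2}$ absorb the single remaining factor of $q$. (Incidentally, the paper's tail bound itself silently drops the dimensional factor $q$ that Chebyshev would give for $N(0,\sigma^2 I_q)$; a fully rigorous version would need a sharper, dimension-free tail estimate for $\|\epsilon\|$. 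But it is the single-H\"older step, not the expand-then-Markov step, that reproduces the stated inequality.) Your handling of the $\eta=3$ absorption and of the uniformity of $c_5$ is otherwise in line with the paper.
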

\begin{proof} We start by considering that
\[
\begin{aligned}
&\mathbb{E}_{\varepsilon \sim \nu}\left[\| f_{\theta_0}^\tau(Au + \epsilon) - Wu \|_{\ell^2}^2\right] = \int_{Y_q} \| f_{\theta_0}^\tau(Au + \epsilon) - Wu \|_{\ell^2}^2 d_\nu(\epsilon) \\
& \quad = \int_{\| \varepsilon \| < \delta } \| f_{\theta_0}^\tau(Au + \epsilon) - Wu \|_{\ell^2}^2 d_\nu(\epsilon)  + \int_{\| \varepsilon \| > \delta } \| f_{\theta_0}^\tau(Au + \epsilon) - Wu \|_{\ell^2}^2 d_\nu(\epsilon).
\end{aligned}
\]
We now employ \eqref{eq:superconv} on the first term and H\"older inequality on the second term. Moreover, in view of Chebyshev's inequality, $\nu(\{ \| \varepsilon \| > \delta \}) \leq \frac{\sigma^2}{\delta^2}$. Therefore,
\[
\begin{aligned}
\mathbb{E}_{\varepsilon \sim \nu}\! \! \left[\| f_{\theta_0}^\tau(Au + \epsilon) - Wu \|_{\ell^2}^2\right] &\leq c_5^2 \delta^2 \left( 1- \frac{\sigma^2}{\delta^2}\right) + \frac{\sigma}{\delta} \left( \mathbb{E}_{\varepsilon \sim \nu}\! \left[\| f_{\theta_0}^\tau(Au + \epsilon) - Wu \|_{\ell^2}^4\right] \right)^{\frac{1}{2}} \\
& \leq c_5^2 \delta^2  + \frac{\sigma}{\delta} \left( 8 k_1^4 + 8 k_2^4 \mathbb{E}\big[\|\epsilon\|_{Y_q}^4\big] \right)^{\frac{1}{2}}.
\end{aligned}
\]
By \eqref{eq:moments} and by $\sigma = \delta^\eta$ we immediately verify the first thesis, and imposing $\eta = 3$ and $\delta^{2\eta}q < 1$ we get \eqref{eq:fine} with $c^{*} = c_5^2 + 2\sqrt{2} k_1^2 + 2 \sqrt{6} k_2^2$.
\end{proof}
In view of \cref{lem:fine}, we can easily prove the following convergence result regarding the the optimal network $f^\tau_{\theta^*}$:
\begin{proposition}
   Consider $\varepsilon \sim N(0,\sigma^2 I_q)$ with $\delta = \sigma^{1/3}$ and let $\theta^*$ satisfy \eqref{eq:thetastar}. There exists $\sigma_1 > 0$ such that, for $\sigma \leq \min\{\sigma_1, q^{1/2}\}$, it holds
   \begin{equation}
   \mathcal{L}(\theta^*; \mu, \nu) \leq c^{*} \delta^2.
       \label{eq:convthetastar}
   \end{equation}
\label{prop:convthetastar}
This also accounts to say that the random variable $f^\tau_{\theta^*}(A_\vN u + \epsilon)$ converges to $Wu$ in mean as $\delta \rightarrow 0$.
\end{proposition}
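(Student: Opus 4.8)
The plan is to obtain \eqref{eq:convthetastar} as a direct corollary of \cref{lem:fine}, exploiting the defining optimality of $\theta^*$. Recall from the discussion preceding \cref{lem:rough} that there is a distinguished parameter vector $\theta_0 \in \Theta$ for which the network $f_{\theta_0}^\tau$ coincides with $N$ iterations of the (modified) ISTA scheme \eqref{eq:RESnet}. Since $\theta^*$ is a minimizer of $\theta \mapsto \mathcal{L}(\theta;\mu,\nu)$ by \eqref{eq:thetastar}, we immediately have
\[
\mathcal{L}(\theta^*;\mu,\nu) \;\leq\; \mathcal{L}(\theta_0;\mu,\nu),
\]
so it suffices to bound the right-hand side.

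To estimate $\mathcal{L}(\theta_0;\mu,\nu)$, I would write the loss \eqref{eq:loss} as an iterated expectation and apply Tonelli's theorem (the integrand is nonnegative and measurable, so this is legitimate):
\[
\mathcal{L}(\theta_0;\mu,\nu) \;=\; \mathbb{E}_{u\sim\mu}\Big[\, \mathbb{E}_{\varepsilon\sim\nu}\big[\, \| f_{\theta_0}^\tau(A_\vN u + \varepsilon) - Wu \|_{\ell^2}^2 \,\big]\Big].
\]
Now I invoke \cref{lem:fine} with $\eta = 3$ (which is exactly the regime $\delta = \sigma^{1/3}$ of the present statement): since $\mu$ is supported on $\mathcal{B}$ and the inner bound \eqref{eq:fine} holds for \emph{every} $u \in \mathcal{B}$ with a constant $c^*$ depending only on $c_5,k_1,k_2$ (hence only on $C_{\mathcal{B}}, L, \rho, \|A_\vN\|, w^{(0)}, N$, and not on the particular $u$), one gets, for $\sigma$ below the threshold of \cref{lem:fine},
\[
\mathbb{E}_{\varepsilon\sim\nu}\big[\, \| f_{\theta_0}^\tau(A_\vN u + \varepsilon) - Wu \|_{\ell^2}^2 \,\big] \;\leq\; c^* \delta^2 \qquad \text{for $\mu$-a.e.\ } u,
\]
and integrating this uniform bound against $\mu$ yields $\mathcal{L}(\theta_0;\mu,\nu) \leq c^*\delta^2$. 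Combining with the optimality inequality gives \eqref{eq:convthetastar}; here $\sigma_1$ is simply the $\sigma_0$ furnished by \cref{lem:fine} (and the extra requirement $\sigma < q^{-1/2}$, i.e.\ $\delta^{2\eta}q < 1$, is what reads as $q^{1/2}$ in the statement). The final claim about convergence in mean then follows from Jensen's inequality: $\mathbb{E}_{u\sim\mu,\varepsilon\sim\nu}[\| f_{\theta^*}^\tau(A_\vN u + \varepsilon) - Wu \|_{\ell^2}] \leq \sqrt{\mathcal{L}(\theta^*;\mu,\nu)} \leq \sqrt{c^*}\,\delta \to 0$ as $\delta \to 0$.

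The argument is essentially a bookkeeping exercise, so there is no substantial obstacle; the only point requiring care is confirming that the constants $c^*$ and $\sigma_0$ in \cref{lem:fine} are genuinely uniform over the support $\mathcal{B}$ of the prior $\mu$ — this is precisely why the earlier lemmas were formulated with constants depending on $C_{\mathcal{B}}$ and $\|w^\dag\|_{\ell^1}$ only through the bound $\|Wu\|_{\ell^1}\leq C_{\mathcal{B}}$ defining $\mathcal{B}$, rather than on individual elements $u^\dag$. One should also note in passing that \cref{lem:fine} already absorbed the split between the event $\{\|\varepsilon\|<\delta\}$ (where \cref{thm:superconv} applies) and its complement (handled by \cref{lem:rough}, Hölder, and Chebyshev), so nothing of that analysis needs to be redone here.
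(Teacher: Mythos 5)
Your proposal is correct and follows essentially the same route as the paper: the optimality inequality $\mathcal{L}(\theta^*;\mu,\nu)\leq\mathcal{L}(\theta_0;\mu,\nu)$ followed by the pointwise-in-$u$ bound from \cref{lem:fine} integrated against $\mu$. Your added remarks (Tonelli, the uniformity of $c^*$ over $\mathcal{B}$, Jensen for the convergence-in-mean claim, and the observation that the condition should read $\sigma\leq\min\{\sigma_1,q^{-1/2}\}$ to match \cref{lem:fine}) are all consistent with, and slightly more careful than, the paper's two-line argument.
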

\begin{proof}
By definition of $\theta^*$ and by \cref{lem:fine},
\[
\begin{aligned}
\mathcal{L}(\theta^*; \mu, \nu) &\leq \mathcal{L}(\theta_0; \mu, \nu) = \int_{\mathcal{B}} \int_{Y_q} \| f_{\theta_0}^\tau(A_\vN  u + \epsilon) - Wu \|^2_{\ell^2} d_\nu(\epsilon) d_\mu(u)  \\
& \leq \int_{\mathcal{B}} c^{*} \delta^2 d_\mu(u) = c^{*} \delta^2.
\end{aligned}
\]
\end{proof}
\par
Although the optimal network $f^\tau_{\theta^*}$ allows for a precise approximation of the solution map of the inverse problems, it is impractical to solve the minimization problem stated in \eqref{eq:thetastar}. Instead, Neural Network algorithms require to draw a sample from the random variables $U$ and $E$ and to find the parameter $\theta$ which allows for the best reconstruction on such sample (training process). 
This task is addressed by minimizing a discretized loss functional, as reported in \cref{supervised_learning}, and results in the definition of the \textit{trained} Neural Network. The quality of the trained network can be verified by analyzing its \textit{generalization}, namely, its ability to provide good predictions even when tested on data outside the training sample. 
Such an analysis has been performed in detail (although with some different assumptions with respect to the ones in this work) in \cite{de2019deep}, and can be extended also to the problem in consideration.


\section{In practice: the particular case of CT}
\label{sec:in_practice} 

In this section, we focus on the practical aspects of the reconstruction algorithm introduced in \cref{subsec:cnn_converg}, in the particular case of limited-angle computed tomography (LA-CT) with the discrete setting. 
In the remainder of the article, the discrete counterpart of the operator  $A_\vN$  representing the LA-CT will be denoted by $R_\Gamma$.
We first define the regularized minimization problem, and then propose an effective method for the computation of the convolutional kernel filters approximating the backprojection operator in the wavelet domain. Thirdly, we present and discuss the general reconstruction workflow and finally give more details on the two CNN architectures we propose in this paper.

\subsection{The CT minimization problem}
\label{subsec:min_prob_ct}
After suitable discretization, we are given the finite-dimensional measurement vector:
\begin{equation}
    \mathbf{m} = \mathbf{R_\Gamma u^\dagger} + \mathbf{\epsilon},
\end{equation}
where $\mathbf{u^\dagger}\in \mathbb{R}^{p}$ denotes the (unknown) discrete and vectorized image, $\mathbf{R_\Gamma} \in \mathbb{R}^{q\times {p}}$ describes a discretized version of the Radon transform where the angles are limited in the arc specified by  $\Gamma$
and $\mathbf{\epsilon}\in \mathbb{R}^q$ models the measurement noise. 
We call $\mathbf{w}^\dagger$ the $\mathbb{R}^{p}$-vector such that $\mathbf{Wu^\dagger=w^\dagger}$ where $\mathbf{W} \in \mathbb{R}^{{p}\times {p}}$ represents a discretization of the wavelet transform. 
Thus, the regularized minimization problem is given by:
\begin{equation}
    \min_{\mathbf{w}\in\mathbb{R}^{p}} \|\mathbf{R_\Gamma W^\ast w-m}\|_2^2 + \lambda \|\mathbf{w}\|_1 
\end{equation}

Our recovery algorithm for finding a reconstruction $\mathbf{u}$ of $\mathbf{u}^\dagger$ involves convolutional architectures incorporated in the iterative structure of standard ISTA, as described in \cref{sec:unrolled_ista_theo}. In the next paragraphs, we detail the implementation of such an algorithm.

\subsection{Convolutional kernel operator for limited-angle CT}
\label{subsec:creation_bowties}

Building a convolutional algorithm that reproduces the behavior of standard ISTA first requires to identify the various blocks of the matrix $\mathbf{K}$ representing the backprojection operator in the wavelet domain. In other words, the very first step in the development of our method is to establish the convolutional filters of $\mathbf{K}$
which, once applied as defined in \cref{eq:madness}, provide a reliable approximation of the operator $\mathbf{W R_\Gamma^\ast R_\Gamma W}^\ast$. 

One way to compute such convolutional filters that proves to be a numerically advantageous alternative to \cref{eq:madness2}, is represented in \cref{fig:bowtie_computation}. 
Let us consider an object whose representation in the wavelet
domain has only one nonzero pixel, located at the center of one of its wavelet subbands. Applying the operator $\mathbf{W R_\Gamma^\ast R_\Gamma W}^\ast$ to this initial object leads to a new object whose subbands present a bowtie-shaped structure. Those 'bowtie' subbands constitute a first set of convolutional filters. By reiterating this operation until the central pixel of all the wavelet subbands in the initial object has been visited, one obtains the entire collection of convolutional filters necessary for the approximation of $\mathbf{W R_\Gamma^\ast R_\Gamma W}^\ast$.

\begin{figure}
    \centering
    \includegraphics[width=\textwidth]{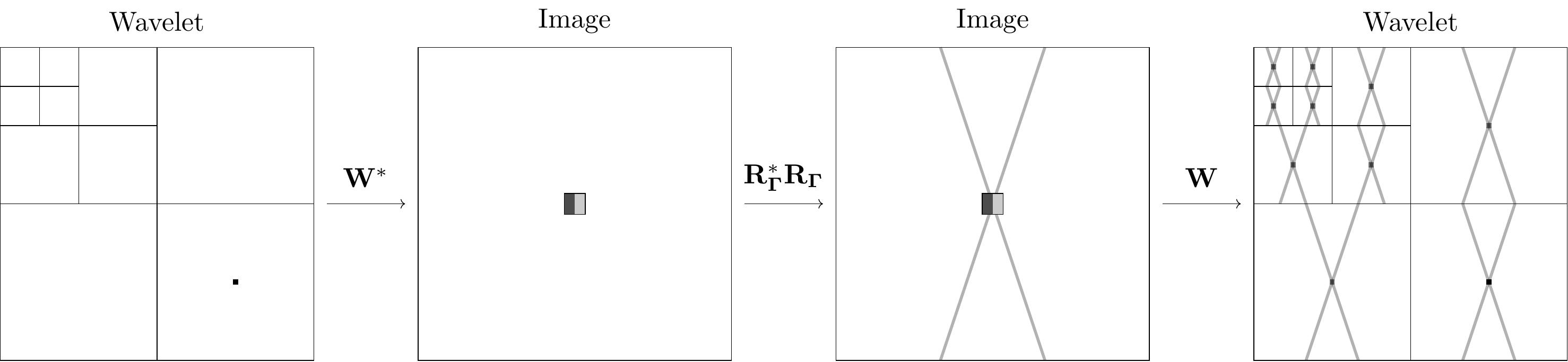}
    \caption{Illustration of the proposed way to compute the filters of the convolutional kernel operator $K$ in the limited-angle CT case. The initial object (on the left) is created such that all its pixels but one are set to zero. The only non-zero pixel is located at the center of one of its wavelet subbands.  By applying the operator $\mathbf{W R_\Gamma^\ast R_\Gamma W}^\ast$ to this initial object, one obtains a new object in the wavelet domain, whose subbands present a bowtie-shaped structure. The set of 'bowtie' subbands thus computed from all the possible initial objects constitute the filters of the convolutional kernel operator.
    Here we have represented three levels of decomposition in the wavelet domain, meaning that the total number of convolutional filters amounts to $(3^2+1)^2 = 10^2 = 100$.}
    \label{fig:bowtie_computation}
\end{figure}

\begin{figure}
    \centering
    \includegraphics[width=\textwidth]{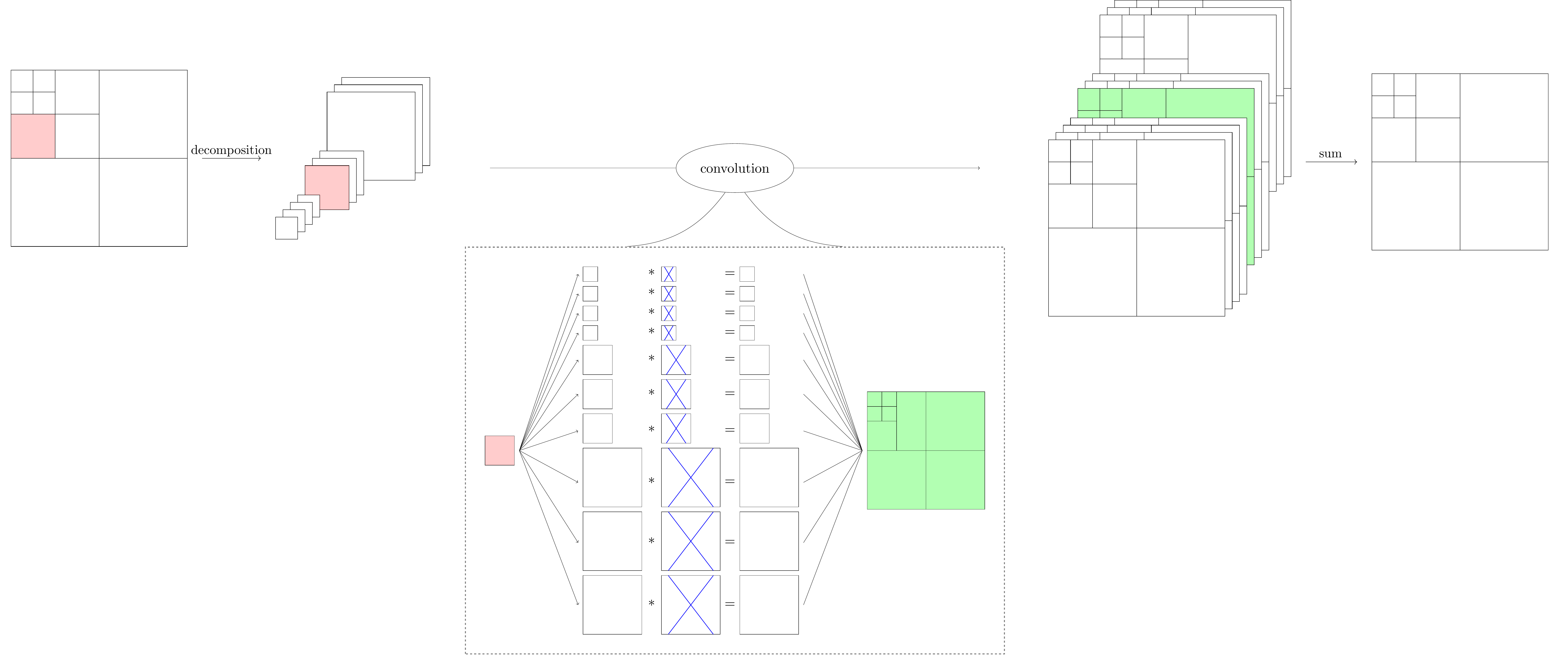}
    \caption{Illustration of the way the filters of the convolutional kernel operator are applied to each wavelet subband of the initial object, after up- and down- sampling operations, in order to approximate the operator $\mathbf{W R_\Gamma^\ast R_\Gamma W}^\ast$.}
    \label{fig:bowtieNET}
\end{figure}

In order to imitate the behavior of the operator $\mathbf{W R_\Gamma^\ast R_\Gamma W}^\ast$, the convolutional filters so computed are then to be applied to the wavelet subbands of an object as illustrated in \cref{fig:bowtieNET}. First, each wavelet subband of the object of interest is replicated $3(J-J_0)+1$ times. Those replicas are either upsampled, or downsampled, or kept with the same dimensions, depending on the scale of the filter they are to be convolved with. 
The set of convolutional filters used on the replicas of a particular wavelet subband of scale $j$ and type $(t)$ is the set of filters beforehand generated by applying the operator $\mathbf{W R_\Gamma^\ast R_\Gamma W}^\ast$ to an object whose only non-zero pixel is located at the center of its wavelet subband of exact same scale $j$ and type $(t)$. Once the convolutions between the replicas and the filters have been performed, the resulting subbands are reassembled to form the wavelet representation of a new object. This process is reiterated for all the subbands of the initial object and ultimately, the $3(J-J_0)+1$ resulting items are summed. The final outcome is an approximation of the wavelet representation of the operator $\mathbf{W R_\Gamma^\ast R_\Gamma W}^\ast$ applied to the initial object (cf \cref{fig:circle_backproj}). 

Two remarks are worth mentioning regarding the creation and use of the above-defined convolutional filters. First, our practical implementation very slightly differs from the theory presented in \cref{eq:madness} as far as the downsampling is concerned. In our codes, downsampling is indeed applied before computing the convolution between the filter and the wavelet subband replica, and not after as it is presented in the theory. This choice is motivated by the reduction in terms of storage needs and running time such a change allows while preserving the accuracy of the approximation. Secondly, both the theoretical analysis and the experimental tests showed that the dimensions of the convolutional filters used for the approximation of $\mathbf{W R_\Gamma^\ast R_\Gamma W}^\ast$ do affect the accuracy of the results. Initially, we assumed that the convolutional filters should be generated from only-one-non-zero-pixel objects with the same dimensions $2^J\times 2^J$ as the image of interest (recall that $p=2^{2J}$). However, we reached the conclusion that they actually have to be generated from twice bigger objects, that is of dimensions $2^{J+1}\times 2^{J+1}$, in order to get an accurate approximation of the operator $\mathbf{W R_\Gamma^\ast R_\Gamma W}^\ast$. An illustration of the effects of the size of the convolutional filters can be seen on \cref{fig:circle_backproj}.

\begin{figure}[t]
  \centering
  \begin{subfigure}[b]{0.3\textwidth}
     \centering \includegraphics[width=\textwidth,trim={3cm 1cm 3cm 1cm},clip]{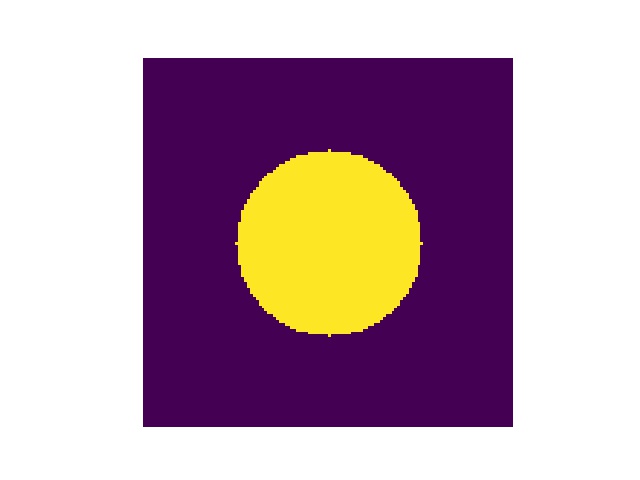}
     \caption{}
     \label{fig:ground_truth}
  \end{subfigure}
  \begin{subfigure}[b]{0.3\textwidth}
     \centering \includegraphics[width=\textwidth,trim={3cm 1cm 3cm 1cm},clip]{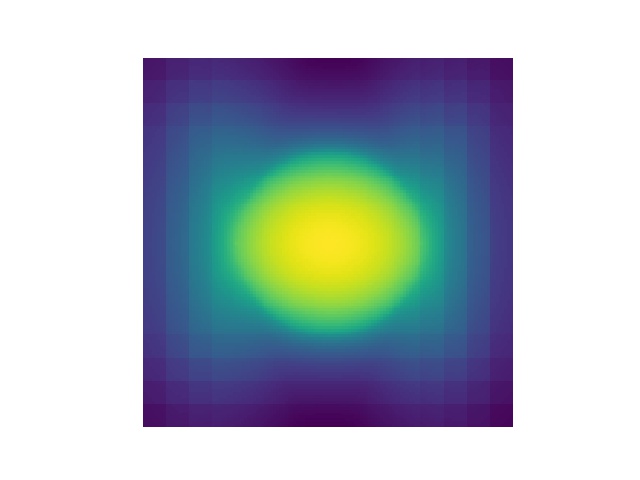}
     \caption{}
     \label{fig:bp_hat_small}
  \end{subfigure}
  \begin{subfigure}[b]{0.3\textwidth}
     \centering \includegraphics[width=\textwidth,trim={3cm 1cm 3cm 1.5cm},clip]{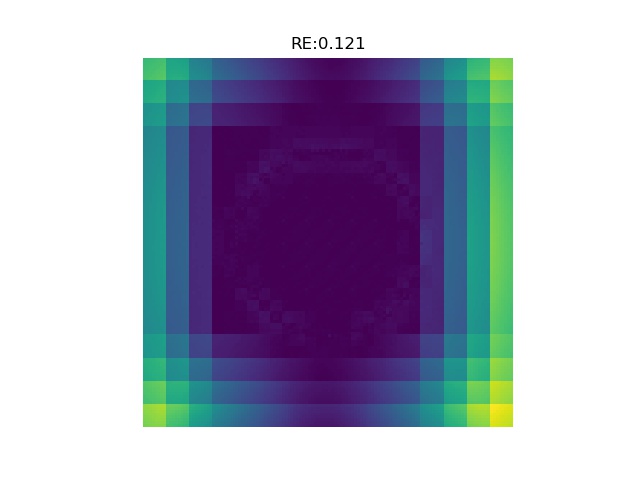}
     \caption{RE: 0.121}
     \label{fig:bp_hat_small_err}
  \end{subfigure}
  
  \begin{subfigure}[b]{0.3\textwidth}
     \centering \includegraphics[width=\textwidth,trim={3cm 1cm 3cm 1cm},clip]{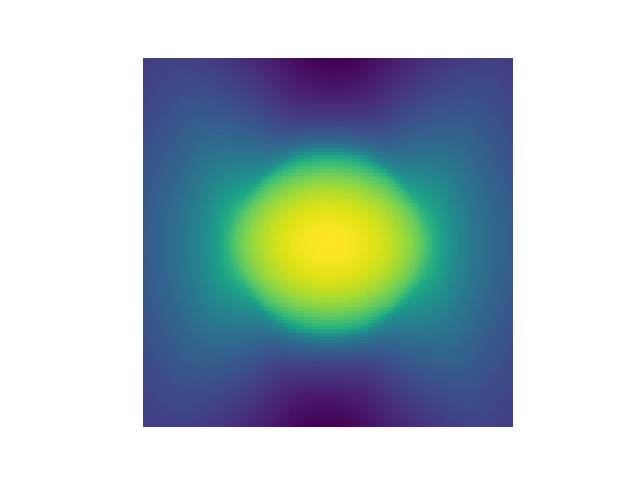}
     \caption{}
     \label{fig:bp_true}
  \end{subfigure}
  \begin{subfigure}[b]{0.3\textwidth}
     \centering \includegraphics[width=\textwidth,trim={3cm 1cm 3cm 1cm},clip]{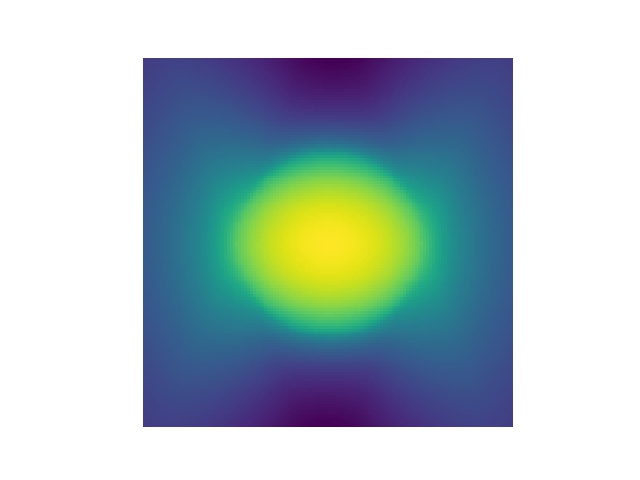}
     \caption{}
     \label{fig:bp_hat_big}
  \end{subfigure}
  \begin{subfigure}[b]{0.3\textwidth}
     \centering \includegraphics[width=\textwidth,trim={3cm 1cm 3cm 1.5cm},clip]{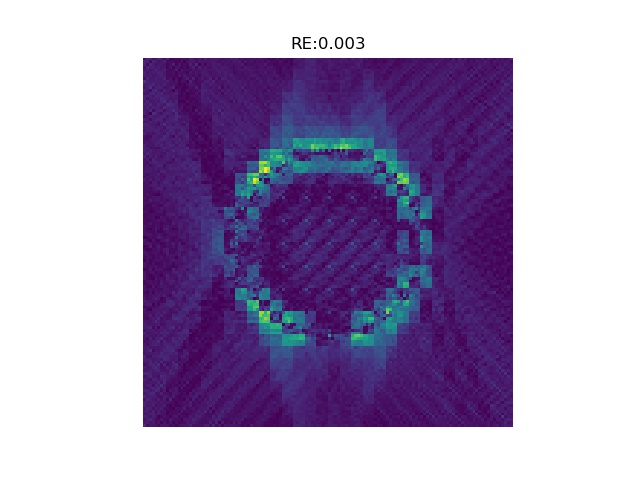}
     \caption{RE: 0.003}
     \label{fig:bp_hat_big_err}
  \end{subfigure}
  \caption{Illustration of the effect of the standard backprojection operator and of its approximations based on convolutional filters of different sizes. (\subref{fig:ground_truth}) shows the ground truth $\mathbf{u}^\dagger$ of interest and (\subref{fig:bp_true}) its standard backprojection $\mathbf{R_\Gamma^\star R_\Gamma u}^\dagger$, computed with the basic functions of the Python package \texttt{scikit-image}. (\subref{fig:bp_hat_small}) (resp. (\subref{fig:bp_hat_big})) represents the approximation $\mathbf{Ku}^\dagger$ obtained with convolutional filters beforehand generated from $2^J\times 2^J$ (resp. $2^{J+1}\times 2^{J+1}$) only-one-non-zero-pixel object.  (\subref{fig:bp_hat_small_err}) and  (\subref{fig:bp_hat_big_err}) show the absolute differences between the approximation of the backprojection operator and the expected value (\subref{fig:bp_true}). The dynamic range of the plot is modified for better contrast.}
  \label{fig:circle_backproj}
\end{figure}

\subsection{Our CNN architectures}
\label{sec:our_models}
The above described method for generating and applying the filters of the  kernel operator $\mathbf{K}$ makes the concrete implementation of a convolutional algorithm that imitates the behavior of standard ISTA possible. 
Thus, the convolutional implementation of ISTA, result-wise equivalent to the standard one, could be written as:
\begin{equation}
    \mathbf{w}^{(n+1)} = \mathcal{S}_{\frac{\lambda}{L}} \left(\mathbf{w}^{(n)} + \frac{1}{L}\left(\mathbf{WR_\Gamma^\ast m} - \mathbf{K w}^{(n)}\right)\right), \quad \quad n = \{0,\dots,N\}
\end{equation}

This algorithm offers the merits of the iterative model-based method ISTA, while allowing the incorporation of data-driven approaches, such as machine learning and deep neural network techniques. The implementation of 
$\mathbf{K}$ indeed involves operations that are all perfectly adaptable to the framework of CNNs. 
Our goal is precisely to take full advantage of this compatibility and profit from the remarkable potentials of deep neural networks by converting the hitherto fixed operator $\mathbf{K}$ into a partially trainable CNN. 
Thus, the center of the convolutional filters so far precomputed with the deterministic method presented in \cref{subsec:creation_bowties} can henceforth be considered as parameters to be learned from data. 
The choice of learning only the central part of the convolutional filters of $\mathbf{K}$ rather than the whole filters is motivated by the need to reduce the model complexity 
which, in the latter case, makes the training of the model burdensome if not impractical.

In order to further improve reconstruction performance, we also propose to learn the soft-thresholding parameter as well as the step-length so far set at $1/L$.
The so-defined convolutional architecture results in our proposed algorithm $\Psi$DONet, whose convergence results are detailed in \cref{subsec:cnn_converg}. 
In \cref{subsubsec:model_bowtie_holes} and \cref{subsubsec:model_astra_NN}, we propose two different implementations of $\Psi$DONet, that proves to be result-wise similar as it can be observed in \cref{subsec:results}.

\subsubsection{$\Psi$DONet-F}
\label{subsubsec:model_bowtie_holes}
The most natural way to implement $\Psi$DONet consists in partitioning the convolutional operator $\mathbf{K}$ into two operators: a fixed one, $\breve{\mathbf{K}}^\tau$, and a trainable (single-layer) CNN referred to as $\layer_{\zeta}^\tau$, where $\tau$ is a tunable hyperparameter. The two operators have the exact same architecture as $\mathbf{K}$  and their sum, before any training, is strictly equivalent to $\mathbf{K}$. The first operator $\breve{\mathbf{K}}^\tau$ is non trainable and its filters are a copy of the filters of $\mathbf{K}$ with the exception that the $\tau \times \tau$ central weights of each filter are set to zero. The second operator $\layer_{\zeta}^\tau$, on the contrary, is composed by $\tau\times\tau$-trainable filters that are initialized with the $\tau\times\tau$ central part of the filters of  $\mathbf{K}$ (cf \cref{fig:partition_k_holes}). 
This first implementation of $\Psi$DONet, referred to as Filter-Based $\Psi$DONet or $\Psi$DONet-F, is formulated as:

\begin{equation}
    \mathbf{w}^{(n+1)} = \mathcal{S}_{\gamma_n} \left(\mathbf{w}^{(n)} + \alpha_n\left(\mathbf{WR_\Gamma^\ast m} - \beta_n \left(  \breve{\mathbf{K}}^\tau\mathbf{w}^{(n)} +  \layer_{\zeta_n}^\tau\mathbf{w}^{(n)}\right)\right)\right)
\label{eq:cnn_model_bowtie}
\end{equation}
where $n = \{0,\dots,N\}$ and the parameters to be learned are $\{\gamma_0,\alpha_0, \beta_0,\zeta_0$,$\dots$,$\gamma_N,\alpha_N$, $\beta_N,\zeta_N\}$. The parameters $\{\beta_0,\dots,\beta_N\}$ have been added in such a way that the influence of the fixed operator $\breve{\mathbf{K}}^\tau$ with respect to the constant term $\mathbf{WR_\Gamma^\ast m}$ can be adjusted in order to maximize the accuracy of the results. It is worth mentioning that for the particular choice of $\gamma_n=\frac{\lambda}{L}, \alpha_n=\frac{1}{L}, \beta_n=1, $ for any $ n = \{0,\dots,N\} $, this model before any training is exactly equivalent to standard ISTA.

The trade-off between the number of parameters that can be improved through the learning process and the trainability of the model is controlled by $\tau$. For a sound choice of such a hyperparameter, the complexity of the model is sufficiently reduced to allow for the convergence of the learning algorithm  while enabling the enhancement of a significant number of weights in the filters. 

\begin{figure}
    \centering
    \includegraphics[width=\textwidth]{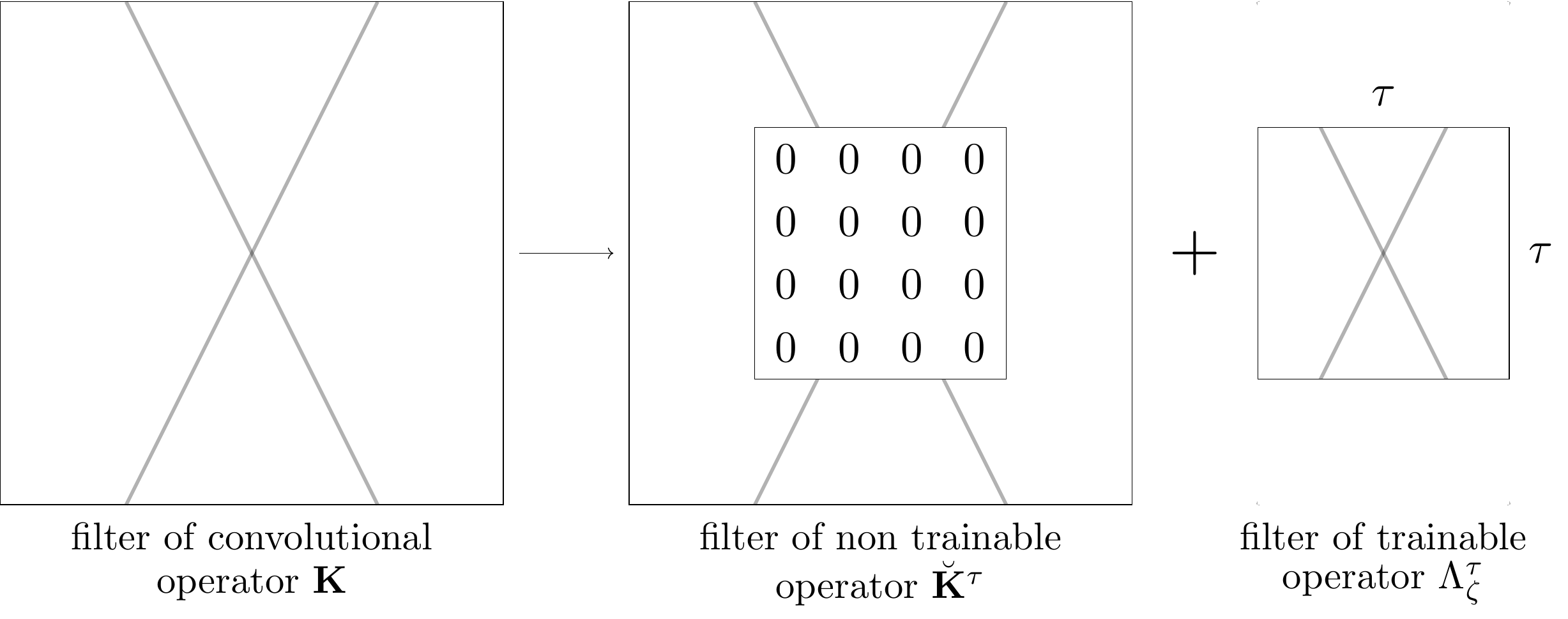}
    \caption{Illustration of the way the convolutional filters of the two operators in $\Psi$DONet-O are computed based on the filters of operator $\mathbf{K}$. Each filter of $\mathbf{K}$ is partitioned into two filters, which sum is equivalent to the initial one. The filter of $\breve{\mathbf{K}}^\tau$ is a copy of the filter of $\mathbf{K}$ with the exception that the $\tau \times \tau$ central weights are set to zero. The filter of $\layer_{\zeta}^\tau$ has dimensions $\tau\times\tau$ and is initialized with the central $\tau \times \tau$ central weights of the filter of $\mathbf{K}$.}
    \label{fig:partition_k_holes}
\end{figure}

This implementation has the advantage of offering a clear interpretation of the role and meaning of the convolutional filters belonging to $\breve{\mathbf{K}}^\tau$ and $\layer_{\zeta}^\tau$. Those filters are indeed initialized with the filters of the operator $\mathbf{K}$ that imitates the behavior of $\mathbf{W R_\Gamma^\ast R_\Gamma W}^\ast$. Thus, modifying their weights through the learning process can be thought of as a direct improvement of the back-projection operator. 

$\Psi$DONet-F has led to very satisfactory preliminary results, presented in \cref{sec:experiments}. However, training such a model on big images may quickly become extremely onerous in terms of running time and storage requirements. Such problems may arise while training $\Psi$DONet-F on images of dimensions greater or equal to $256\times 256$.
Unlike typical CNNs that usually make use of small-sized convolutional filters, the filters of $\breve{\mathbf{K}}^\tau$ in our proposed algorithm are much bigger than the wavelet subbands they are convolved with. This uncommon procedure, that \textit{inter alia} implies the padding, \textit{i.e.}, the addition of many extra pixels to the edge of each wavelet subband, brings about a severe speed reduction in the training process as well as the necessity of a substantial memory space. The alternative implementation of $\Psi$DONet, described in \cref{subsubsec:model_astra_NN} , addresses these shortcomings.

\subsubsection{$\Psi$DONet-O}
\label{subsubsec:model_astra_NN}
The main flaw of $\Psi$DONet-F rests upon the use of operator $\breve{\mathbf{K}}^\tau$ which implies numerous burdensome convolutions.
This issue is worked around in $\Psi$DONet-O \eqref{eq:model_astra_NN}, as $\breve{\mathbf{K}}^\tau$ is not involved anymore. Here, the backprojection operator is not approximated, meaning that $\mathbf{W R_\Gamma^\ast R_\Gamma W}^\ast$ is indeed implemented as the succession of the inverse wavelet, Radon, inverse Radon and direct wavelet transforms applied to the iterate $\mathbf{w}^{(n)}$. This second implementation of $\Psi$DONet, named Operator-Based $\Psi$DONet or $\Psi$DONet-O, reads as:

\begin{equation}
    \mathbf{w}^{(n+1)} = \mathcal{S}_{\gamma_n} \left(\mathbf{w}^{(n)} + \alpha_n\left(\mathbf{WR_\Gamma^\ast m} -  \mathbf{W R_\Gamma^\ast R_\Gamma W}^\ast\mathbf{w}^{(n)}\right) +  \beta_n \layer_{\zeta_n}^\tau\mathbf{w}^{(n)}\right)
\label{eq:model_astra_NN}
\end{equation}
where $n = \{0,\dots,N\}$, the parameters to be learned are $\{\gamma_0,\alpha_0, \beta_0,\zeta_0,\dots,\gamma_N $,$\alpha_N $, $\beta_N,\zeta_N\}$, and $\layer_{\zeta_n}^\tau$
has the same architecture as the operator as $\mathbf{K}$. The block diagram of the method is represented in \cref{fig:diagram_unrolled}.
For the special choice of $\gamma_n=\frac{\lambda}{L}, \alpha_n=\frac{1}{L}, \beta_n=0, $ for any $ n = \{0,\dots,N\} $, this model is exactly equivalent to standard ISTA. The only convolutions involved in this alternative implementation are the ones composing the CNN $\layer_{\zeta_n}^\tau$, whose filters are chosen to be small enough to avoid any running time or storage issue. 
In that sense, $\Psi$DONet-O offers an implementation 
numerically preferable to $\Psi$DONet-F, while 
retaining the same properties on a theoretical level.
Furthermore, such a model keeps offering a clear interpretation of its post-processing abilities since $\layer_{\zeta_n}^\tau$, on account of its architecture, can still be seen as an adjunct for improving the back-projection operator. 

\begin{figure}
    \centering
    \includegraphics[width=\textwidth]{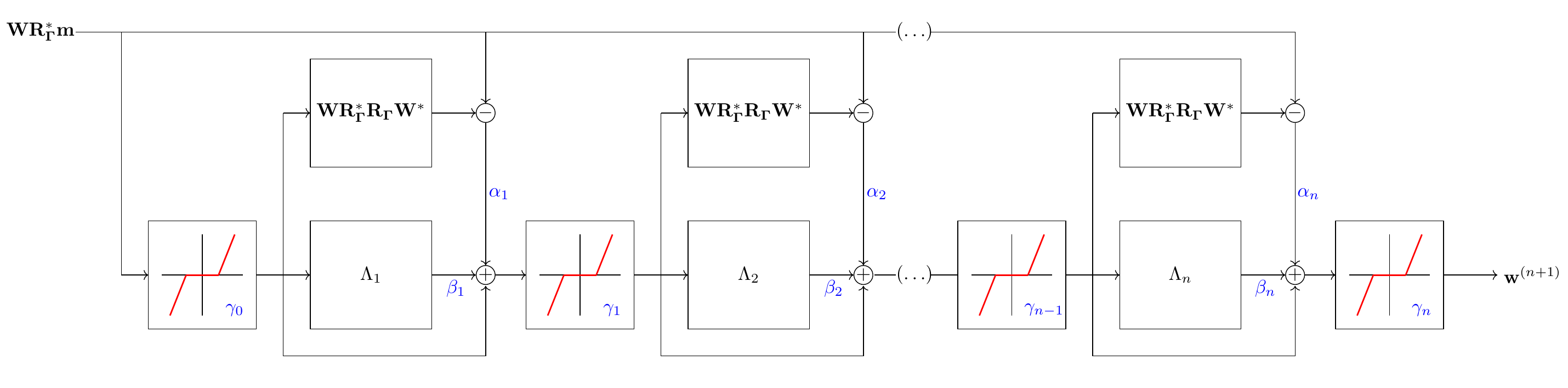}
    \caption{Block diagram of the proposed model \eqref{eq:model_astra_NN}}
    \label{fig:diagram_unrolled}
\end{figure}

\subsubsection{Note on soft-thresholding parameters} From a theoretical point of view, the parameters $\gamma_0, \dots, \gamma_N$ in $\Psi$DONet-F and $\Psi$DONet-O have to be non-negative, as they represent the soft-thresholding parameters. In order to stick to 
the operator originally involved in standard ISTA, it is possible to enforce the positivity of the coefficient by replacing each $\gamma_n$ by $10^{\tilde{\gamma}_n}$, where $\tilde{\gamma}_n$ becomes the actual trainable parameter. 
However, in order to allow for a greater degree of freedom in the learning process, we decided to implement the operator $\mathcal{S}_{\gamma_n}$ in such a way that it is also interpretable for negative values of its parameter $\gamma_n$. In such a case,
we define the operator $\mathcal{S}_{\gamma_n<0}$ as the symmetric of the soft-thresholding curve with respect to $y=x$, while
for non-negative values of $\gamma_n$, $\mathcal{S}_{\gamma_n}$ is exactly equivalent to the soft-thresholding operator.
Formally, $\mathcal{S}_{\gamma_n}$ becomes:

\vspace{.25cm}
\minibox[c]{  For $\gamma_n\geq 0:$ \\[0.25em]
$\mathcal{S}_{\gamma_n}(x) =   \begin{cases} x - \gamma_n, & \mbox{if } x \geq \gamma_n  \\ 0, & \mbox{if } |x|<\gamma_n 
\\ x + \gamma_n, & \mbox{if } x \leq - \gamma_n \end{cases}   $ 
} %
\hfill{\minibox[c]{
For $\gamma_n<0:$ \\[0.25em]
$\mathcal{S}_{\gamma_n}(x) =   \begin{cases} x - \gamma_n, & \mbox{if } x \geq 0
\\ x + \gamma_n, & \mbox{if } x < 0  \end{cases} $
}}
\vspace{.5cm}

The two implementations $\Psi$DONet-F and $\Psi$DONet-O are tested with and without the positivity constraint on $\gamma$ (cf results in \cref{subsec:results}).

\subsection{Supervised Learning}
\label{supervised_learning}
If we denominate $f_\mathbf{\theta}^\tau$ the $N$-layer CNN that, given $\mathbf{m}$, computes the final output $\mathbf{w}_{N+1}$ according to one of the two proposed architectures, we aim at learning the optimal high-dimensional vector $\mathbf{\theta} = \{\gamma_0,\alpha_0, \beta_0,\zeta_0,\dots,$ $\gamma_N,\alpha_N,\beta_N,\zeta_N\}$ that ideally satisfies the relation:
\begin{equation}
    f_\mathbf{\theta}^\tau(\mathbf{m}) \approx \mathbf{Wu}^\dagger 
\end{equation}

For a mathematical formalization, we regard the tuple $\left( \mathbf{m}, \mathbf{u}^\dagger\right) \in \mathbb{R}^{q}\times\mathbb{R}^{p}$ as a random variable with a joint probability distribution $\Xi$, as detailed in \cref{subsec:cnn_converg}. Ideally, we would like to find a parameter vector $\mathbf{\theta}^*$ minimizing the expected risk:
\begin{equation}
    \min_\mathbf{\theta} \left(\mathbb{E}_{( \mathbf{m}, \mathbf{u}^\dagger) \sim \Xi} \| f_\mathbf{\theta}^\tau(\mathbf{m}) - \mathbf{Wu}^\dagger \|_2^2 \right)
    \label{eq:min_pb_contiunous}
\end{equation}
Other loss functions, such as the weighted $l_2$-norm, where the wavelet coefficients are weighted depending on their scale, have been tested and lead to results similar to the non-weighted $l_2$-norm. For the sake of brevity, we will stick to the basic form of \eqref{eq:min_pb_contiunous}.

In practice, computing the expectation with respect to $\Xi$ is not possible. Instead, we are typically given a finite set of independent drawings $( \mathbf{m}_1,\mathbf{u}_1^\dagger ),\dots,( \mathbf{m}_S,\mathbf{u}_S^\dagger)$ and consider the minimization of the empirical risk:
\begin{equation}
    \min_\mathbf{\theta} \frac{1}{S} \sum_{i=1}^S \| f_\mathbf{\theta}^\tau(\mathbf{m}_i) - \mathbf{Wu}_i^\dagger \|_2^2 
    \label{eq:min_pb_empirical}
\end{equation}
Depending on the properties of $f_\mathbf{\theta}^\tau$, the optimization problem is in general non-convex. In the case of neural networks, typically some form of gradient descent is used, where the gradients are calculated via backpropagation \cite{rumelhart1986}. Computing the gradient over the entire training set in \eqref{eq:min_pb_empirical} is often not feasible for large-scale problems due to memory limitations. To circumvent this problem, stochastic of minibatch gradient descent is used, in which the gradient is approximated over smaller, randomly selected batches of training examples \cite[Chapter~8]{Goodfellow-et-al-2016}.

The final performance (\textit{i.e.}, the generalization) of the trained map $f_\mathbf{\theta}^\tau$ is evaluated on a separate set of independent drawings, the so-called test set, that were not previously used in the optimization of $\mathbf{\theta}$ in \eqref{eq:min_pb_empirical}.

\section{Experiments and results}
\label{sec:experiments}
In this section, we evaluate the performance of the proposed reconstruction schemes by comparing with standard ISTA. 

\subsection{Preliminaries}
\label{subsec:preliminaries}
Let us begin by describing the considered experimental scenario, the implementation of the used operators and the training procedure.

\subsubsection{Data set}
\label{subsubsec:data_set}
The data set consists of 10700 synthetic images of ellipses, where the number, locations, sizes and the intensity gradients of the ellipses are chosen at random. Using the Matlab function \texttt{radon}, we simulate measurements for a missing wedge of $60^\circ$ with Gaussian noise. To avoid an inverse crime \cite{siltanen_mller2012} the measurements are simulated at a higher resolution and then downsampled for an image resolution of $128 \times 128$. 10000 images are used for training, 200 images for validation and 500 for testing. 

\subsubsection{Operators}
\label{subsubsec:operators}
For the implementation of the discrete limited angle operator $\mathbf{R}_\Gamma$ we use the \texttt{radon} routine of the Python package \texttt{scikit-image} \cite{scikit-image}, or the 2D parallel beam geometry of the Operator Discretization Library (ODL) library \cite{odl-toolbox}, which is based on the Astra toolbox \cite{astra2016}. The former is employed for generating the backprojections $WR_\Gamma^\ast m$ provided as inputs to $\Psi$DONet-F and $\Psi$DONet-O, while the latter is used for the implementation of $\mathbf{W R_\Gamma^\ast R_\Gamma W}^\ast$ in $\Psi$DONet-O. The direct and inverse Radon transform operators are multiplied by a constant so that their norm is equal to one.
Regarding the wavelet transform, we make use of the Python package \texttt{pywt} \cite{pywt2019} or a rectified version of the package \texttt{tf-wavelets} \cite{tf-wavelets}. 
In all our experiments, we consider the case $J = 7$ and $J_0=3$, implying that the wavelet decomposition $\mathbf{Wu}$ has $10$ subbands. For the two $\Psi$DONet-F and $\Psi$DONet-O, we choose to set $\tau$ to 32. Note that according to theory, $\tau$ is supposed to be odd, however, in practice we prefer it to be even. This very slight modification has no effect on the results.

\subsubsection{Network structure and training}
\label{subsubsec:network_training}
For the implementation of the two $\Psi$DONet-F and $\Psi$DONet-O, we fix the number of unrolled blocks $N$ to 120. 
In order to reduce the number of parameters to be learned, we choose to use only 40 different sets of trainable parameters $\{\zeta_n,\gamma_n, \alpha_n,\beta_n\}$, each of which being used over 3 consecutive blocks, instead of the theoretically expected 120 sets. 
Implementing and training our algorithms has been performed using Tensorflow with an Adam optimizer \cite{adam_2015} and a learning rate (step size) of $10^{-3}$. The number of epochs
was chosen to be 3, and the batch size was set to 25. 
The training, run on a NVIDIA Quadro P6000 GPU, roughly takes 20 hours.

\subsubsection{Compared Methods}
\label{subsubsec:compared_methods}
We compare the preliminary results of the architectures we propose with the reconstructions provided by standard ISTA. In the implementation of the latter, we make use of the formula introduced by Daubechies et al. in \cite{D}. The regularization parameter $\lambda$ and the constant $L$ are respectively set to $2.10^{-6}$ and $5$. The number of iterations for ISTA is determined by the stopping criterion: 
\begin{equation}
    \frac{\|\mathbf{u}^{(n+1)}-\mathbf{u}^{(n)}\|_2^2}{\|\mathbf{u}^{(n)}\|_2^2} < tol
\end{equation}
where $tol$ is chosen to be $2.10^{-4}$.
Hereinbelow, we give a list of the abbreviations henceforth used for the different recovery methods.

\begin{description}[leftmargin=!,align=right,labelwidth=\widthof{\bfseries hhhhhhhh}]
  \item[u$_{\text{ista}}$] Standard ISTA reconstruction. 
  \item[u$_{\text{FBP}}$] Standard filtered backprojection with the 'ramp' filter of \texttt{skicit-image}.  
 \item[u$_{\text{$\Psi$do-F}}^+$] Solution provided by $\Psi$DONet-F with positivity-constraint on the soft-thresholding parameter ($\gamma_n = 10^{\tilde{\gamma}_n}, \forall n$).
  \item[u$_{\text{$\Psi$do-F}}$] Solution provided by $\Psi$DONet-F without positivity-constraint on the soft-thresholding parameter.
    \item[u$_{\text{$\Psi$do-O}}^+$] Solution provided by $\Psi$DONet-O with positivity-constraint on the soft-thresholding parameter ($\gamma_n = 10^{\tilde{\gamma}_n}, \forall n$).
  \item[u$_{\text{$\Psi$do-O}}$]  Solution provided by $\Psi$DONet-O without positivity-constraint on the soft-thresholding parameter.

\end{description}

\subsubsection{Similarity Measures}
\label{subsubsec:similarity_measures}

For an assessment of image quality, we are using several quantitative measures, such as the relative error (RE) given by
\begin{equation}
    \|\mathbf{u}^\dagger-\mathbf{u}\|_2/  \|\mathbf{u}^\dagger\|_2
\end{equation}
where $u^\dagger$ denotes the reference image and $u$ its reconstruction. Furthermore, we consider the peak
signal-to-noise ratio (PSNR) and the structured similarity index (SSIM) \cite{wang2004} provided by Tensorflow. Finally,
we are reporting the Haar wavelet-based perceptual similarity index (HaarPSI) that was recently proposed
in \cite{haar_psi2018}.

\subsection{Results}
\label{subsec:results}
In the following, we will report and discuss the results of our numerical experiments.
The average image quality measures of the 500 test images are reported in \cref{tab:average_im_quality}. Furthermore, a visualization of the reconstruction
quality for two of the test images is given in \cref{fig:compar_methods_10016} and \cref{fig:compar_methods_10004}. Due to the large missing angle of $60^\circ$, the FBP
images in \cref{fig:im10016fbp} and \cref{fig:im10004fbp} are contaminated with streaking artifacts and contrast changes. The standard ISTA offers reconstructions of higher quality (cf \cref{fig:im10016ista} and \cref{fig:im10004ista}), however, the streaking artifacts are still noticeable as well as the impurities 
due to the noise in the measurements. 
Besides, the ISTA reconstructions are toned down, meaning that for the most part, the intensity of the pixels remain significantly lower than the expected values. 
\begin{figure}[H]
  \centering
  \captionsetup{justification=centering}
  \begin{subfigure}[b]{0.3\textwidth}
     \centering \includegraphics[width=\textwidth,trim={3cm 1cm 3cm 1cm},clip]{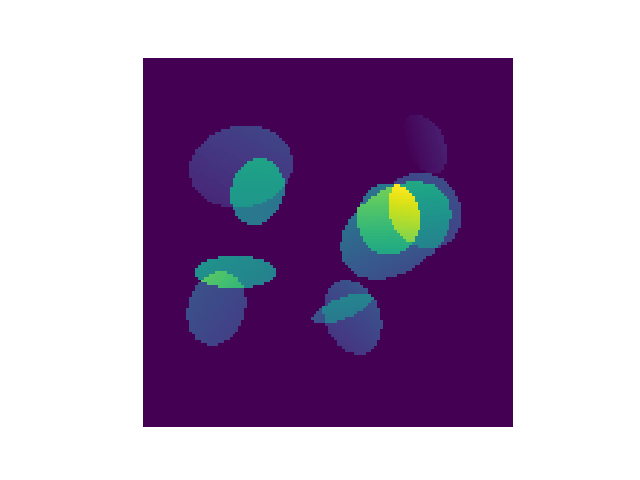}
     \caption{Ground truth $\mathbf{u}^\dagger$  \\{\color{white} RE: , SSIM: }}
     \label{fig:im10016groundtruth}
  \end{subfigure}
  \begin{subfigure}[b]{0.3\textwidth}
     \centering \includegraphics[width=\textwidth,trim={3cm 1cm 3cm 1cm},clip]{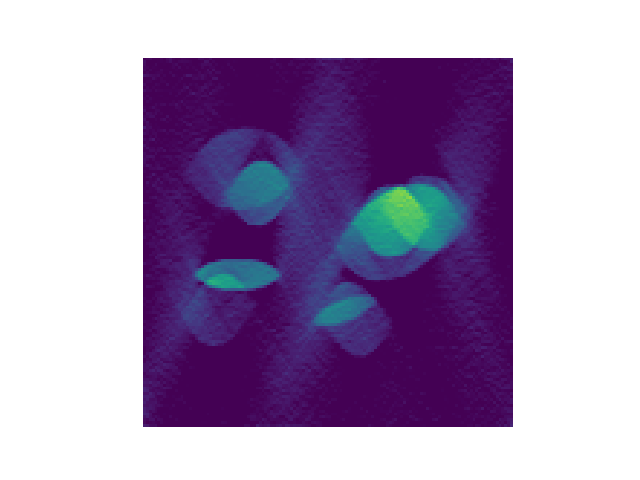}
     \caption{$\mathbf{u}_{\text{ista}}$ \\ RE: 0.47, SSIM: 0.24}
     \label{fig:im10016ista}
  \end{subfigure}
  \begin{subfigure}[b]{0.3\textwidth}
     \centering \includegraphics[width=\textwidth,trim={3cm 1cm 3cm .5cm},clip]{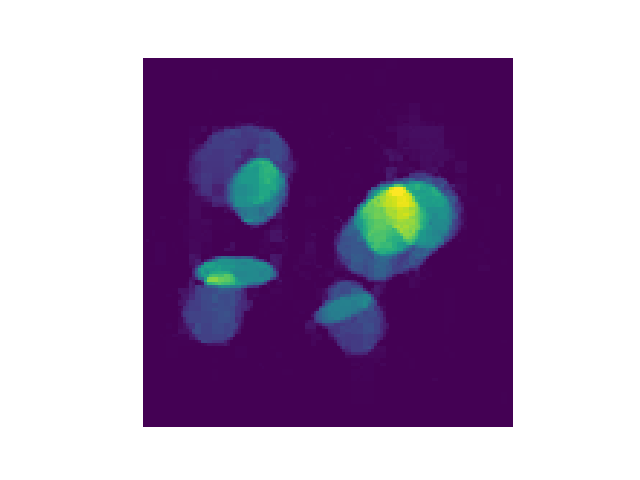}
     \caption{$\mathbf{u}_{\text{$\Psi$do-O}}$ \\ RE: 0.18 , SSIM: 0.83}
     \label{fig:im10016ourprediction_flacm1}
  \end{subfigure}
  
  \begin{subfigure}[b]{0.3\textwidth}
     \centering \includegraphics[width=\textwidth,trim={3cm 1cm 3cm 1cm},clip]{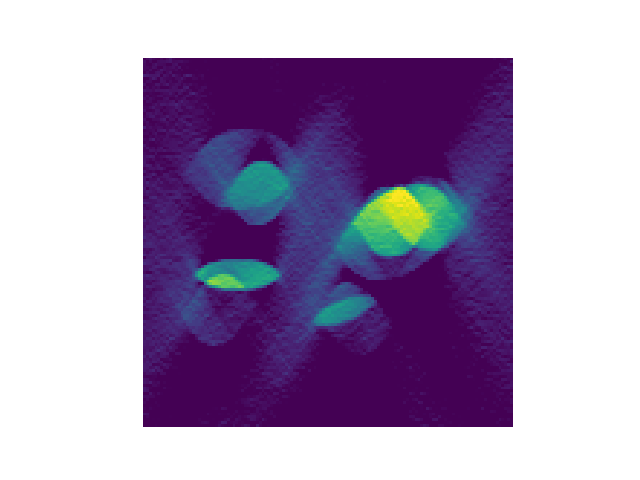}
     \caption{$\mathbf{u}_{\text{FBP}}$  \\ RE: 0.66, SSIM: 0.14}
     \label{fig:im10016fbp}
  \end{subfigure}
  \begin{subfigure}[b]{0.3\textwidth}
     \centering \includegraphics[width=\textwidth,trim={3cm 1cm 3cm 1cm},clip]{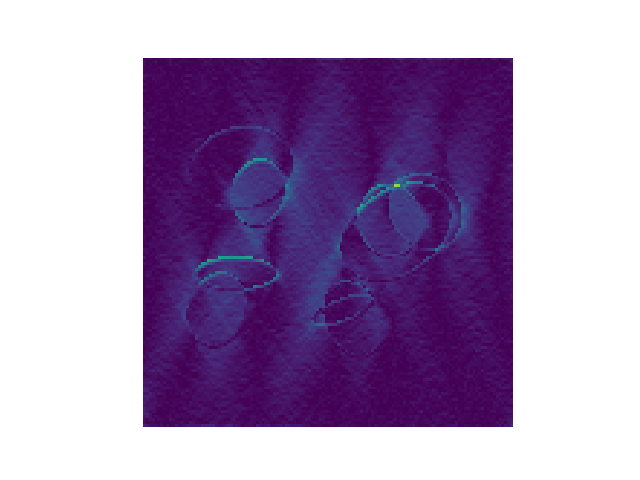}
     \caption{$|\mathbf{u}^\dagger-\mathbf{u}_{\text{ista}}|$ \\{\color{white} RE: , SSIM: } }
     \label{fig:im10016ista_err}
  \end{subfigure}
  \begin{subfigure}[b]{0.3\textwidth}
     \centering \includegraphics[width=\textwidth,trim={3cm 1cm 3cm .5cm},clip]{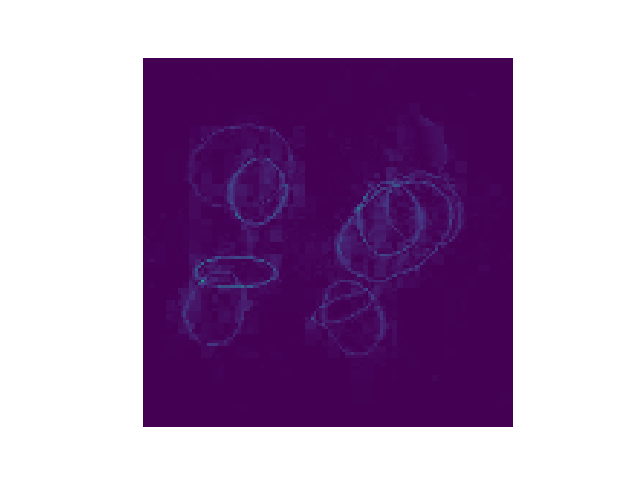}
     \caption{ $|\mathbf{u}^\dagger-\mathbf{u}_{\text{$\Psi$do-O}}|$ \\{\color{white} RE: , SSIM: }}
     \label{fig:im10016ourprediction_flacm1_err}
  \end{subfigure}
  
  \begin{subfigure}[b]{0.3\textwidth}
     \centering \includegraphics[width=\textwidth,trim={3cm 1cm 3cm 1cm},clip]{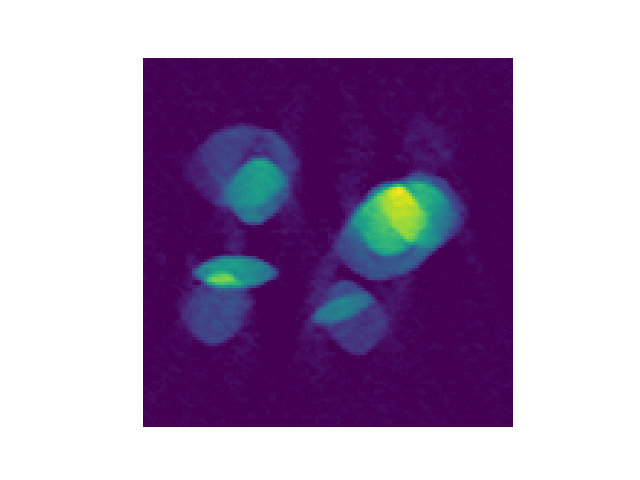}
     \caption{$\mathbf{u}_{\text{$\Psi$do-O}}^+$ \\ RE: 0.23, SSIM: 0.56}
     \label{fig:im10016ourprediction_flacm1exp}
  \end{subfigure}
  \begin{subfigure}[b]{0.3\textwidth}
     \centering \includegraphics[width=\textwidth,trim={3cm 1cm 3cm 1cm},clip]{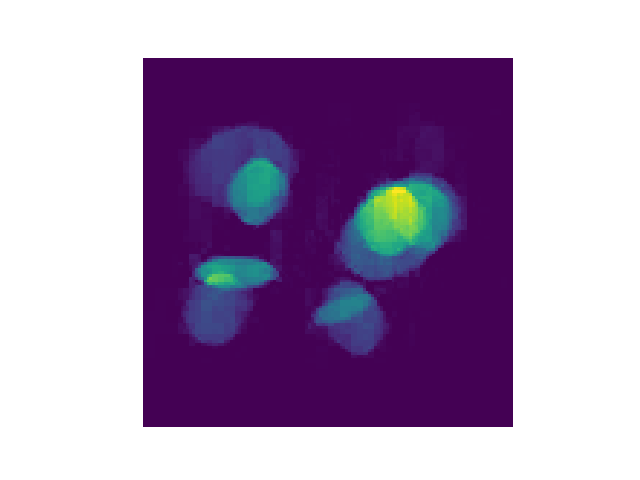}
     \caption{$\mathbf{u}_{\text{$\Psi$do-F}}$ \\ RE: 0.20, SSIM: 0.82}
     \label{fig:im10016ourprediction_flbcm3}
  \end{subfigure}
  \begin{subfigure}[b]{0.3\textwidth}
     \centering \includegraphics[width=\textwidth,trim={3cm 1cm 3cm 1cm},clip]{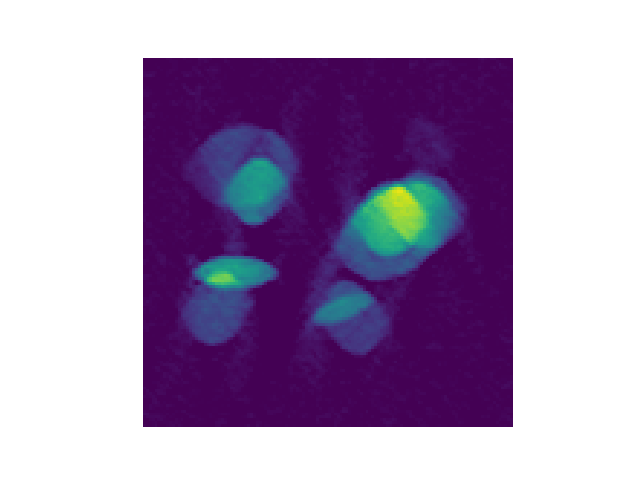}
     \caption{$\mathbf{u}_{\text{$\Psi$do-F}}^+$ \\ RE: 0.24, SSIM: 0.58}
     \label{fig:im10016ourprediction_flbcm3exp}
  \end{subfigure}
  
  \begin{subfigure}[b]{0.3\textwidth}
     \centering \includegraphics[width=\textwidth,trim={3cm 1cm 3cm 1cm},clip]{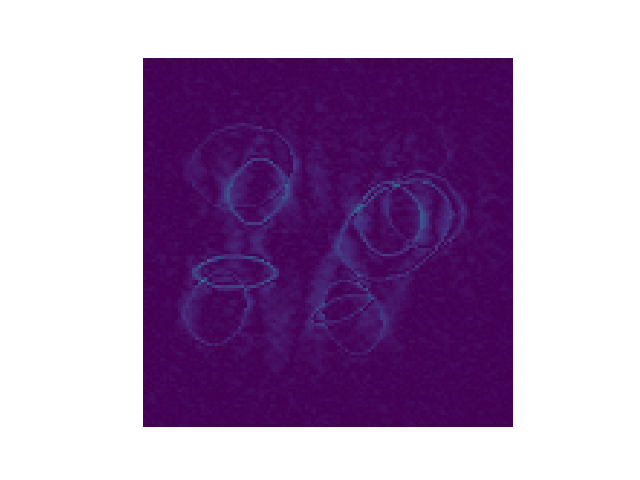}
     \caption{ $|\mathbf{u}^\dagger-\mathbf{u}_{\text{$\Psi$do-O}}^+|$  }
     \label{fig:im10016ourprediction_flacm1exp_err}
  \end{subfigure}
  \begin{subfigure}[b]{0.3\textwidth}
     \centering \includegraphics[width=\textwidth,trim={3cm 1cm 3cm 1cm},clip]{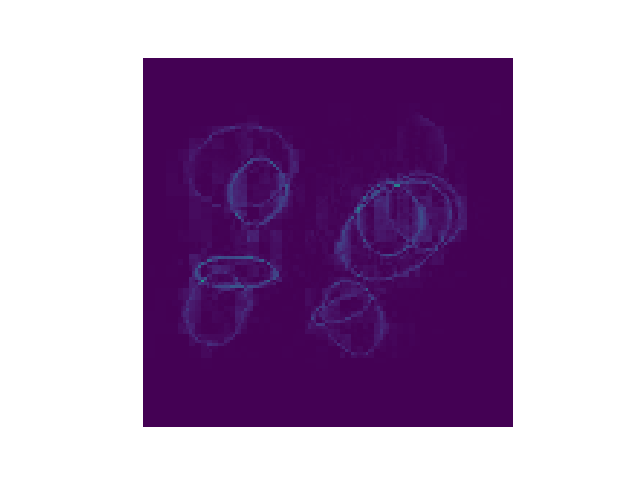}
     \caption{ $|\mathbf{u}^\dagger-\mathbf{u}_{\text{$\Psi$do-F}}|$  }
     \label{fig:im10016ourprediction_flbcm3_err}
  \end{subfigure}
  \begin{subfigure}[b]{0.3\textwidth}
     \centering \includegraphics[width=\textwidth,trim={3cm 1cm 3cm 1cm},clip]{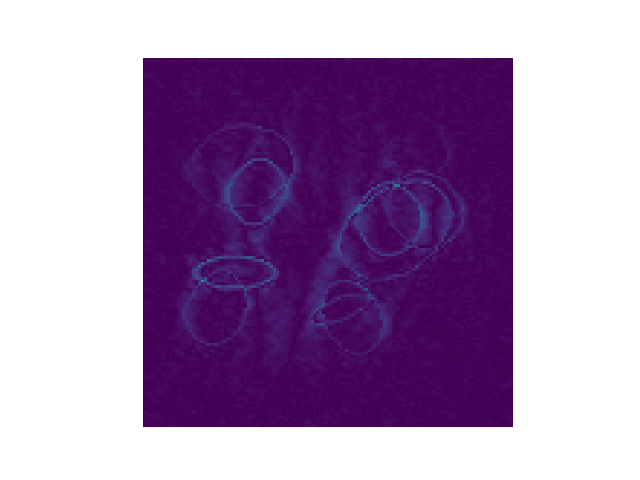}
     \caption{$|\mathbf{u}^\dagger-\mathbf{u}_{\text{$\Psi$do-F}}^+|$ }
     \label{fig:im10016ourprediction_flbcm3exp_err}
  \end{subfigure} 
  \caption{Visualization of the results for one test image.}
  \label{fig:compar_methods_10016}
\end{figure}
\begin{figure}[H]
  \centering
  \captionsetup{justification=centering}
  \begin{subfigure}[b]{0.3\textwidth}
     \centering \includegraphics[width=\textwidth,trim={3cm 1cm 3cm 1cm},clip]{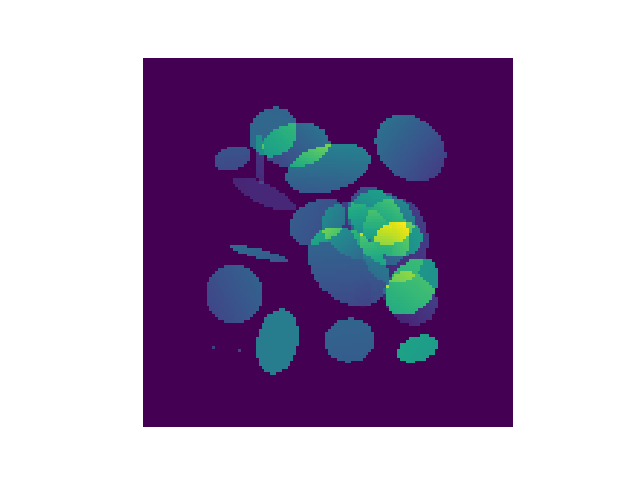}
     \caption{Ground truth $\mathbf{u}^\dagger$  \\{\color{white} RE: , SSIM: }}
     \label{fig:im10004groundtruth}
  \end{subfigure}
  \begin{subfigure}[b]{0.3\textwidth}
     \centering \includegraphics[width=\textwidth,trim={3cm 1cm 3cm 1cm},clip]{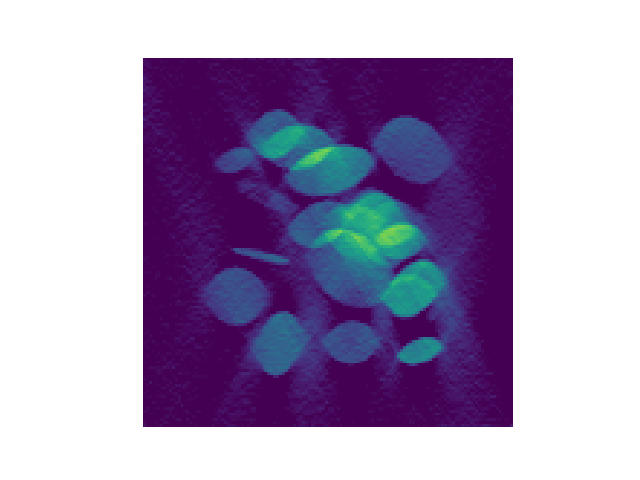}
     \caption{$\mathbf{u}_{\text{ista}}$ \\ RE: 0.43, SSIM: 0.32}
     \label{fig:im10004ista}
  \end{subfigure}
  \begin{subfigure}[b]{0.3\textwidth}
     \centering \includegraphics[width=\textwidth,trim={3cm 1cm 3cm .5cm},clip]{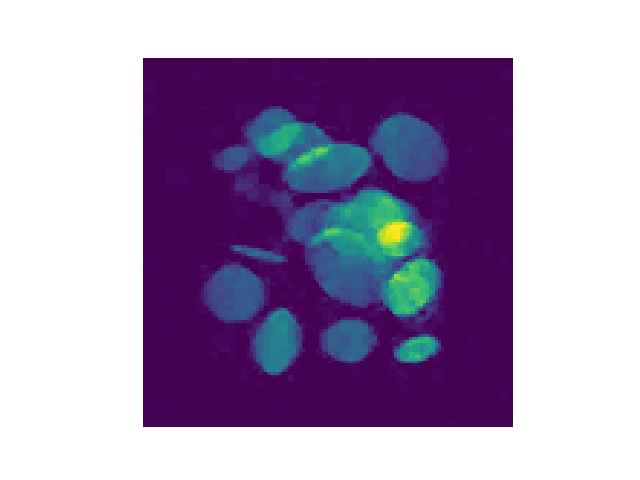}
     \caption{$\mathbf{u}_{\text{$\Psi$do-O}}$ \\ RE: 0.23 , SSIM: 0.78}
     \label{fig:im10004ourprediction_flacm1}
  \end{subfigure}
  
  \begin{subfigure}[b]{0.3\textwidth}
     \centering \includegraphics[width=\textwidth,trim={3cm 1cm 3cm 1cm},clip]{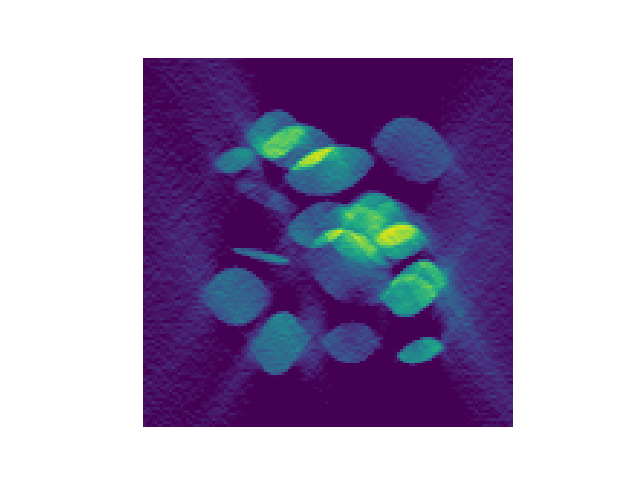}
     \caption{$\mathbf{u}_{\text{FBP}}$  \\ RE: 0.64, SSIM: 0.18}
     \label{fig:im10004fbp}
  \end{subfigure}
  \begin{subfigure}[b]{0.3\textwidth}
     \centering \includegraphics[width=\textwidth,trim={3cm 1cm 3cm 1cm},clip]{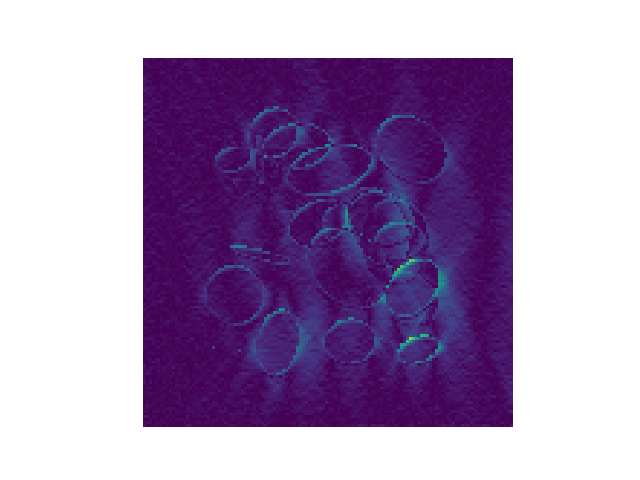}
     \caption{$|\mathbf{u}^\dagger-\mathbf{u}_{\text{ista}}|$ \\{\color{white} RE: , SSIM: } }
     \label{fig:im10004ista_err}
  \end{subfigure}
  \begin{subfigure}[b]{0.3\textwidth}
     \centering \includegraphics[width=\textwidth,trim={3cm 1cm 3cm .5cm},clip]{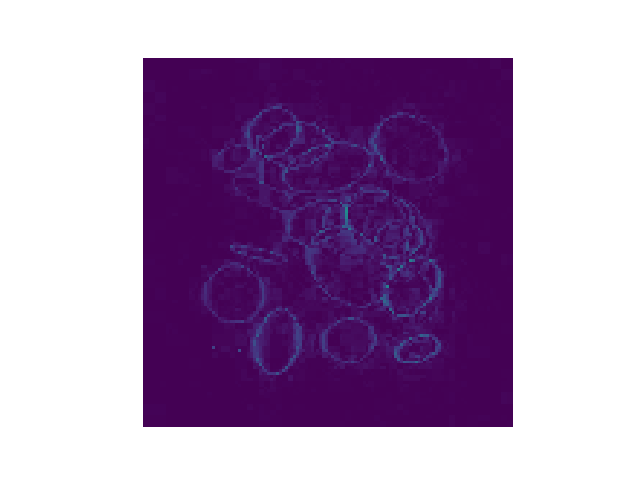}
     \caption{ $|\mathbf{u}^\dagger-\mathbf{u}_{\text{$\Psi$do-O}}|$ \\{\color{white} RE: , SSIM: }}
     \label{fig:im10004ourprediction_flacm1_err}
  \end{subfigure}
  
  \begin{subfigure}[b]{0.3\textwidth}
     \centering \includegraphics[width=\textwidth,trim={3cm 1cm 3cm 1cm},clip]{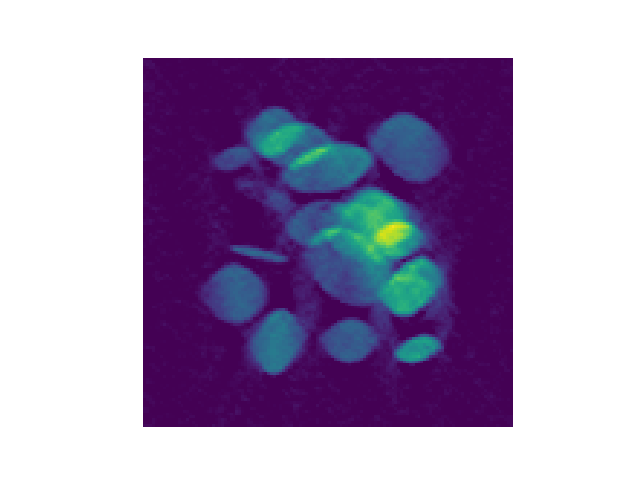}
     \caption{$\mathbf{u}_{\text{$\Psi$do-O}}^+$ \\ RE: 0.28, SSIM: 0.56}
     \label{fig:im10004ourprediction_flacm1exp}
  \end{subfigure}
  \begin{subfigure}[b]{0.3\textwidth}
     \centering \includegraphics[width=\textwidth,trim={3cm 1cm 3cm 1cm},clip]{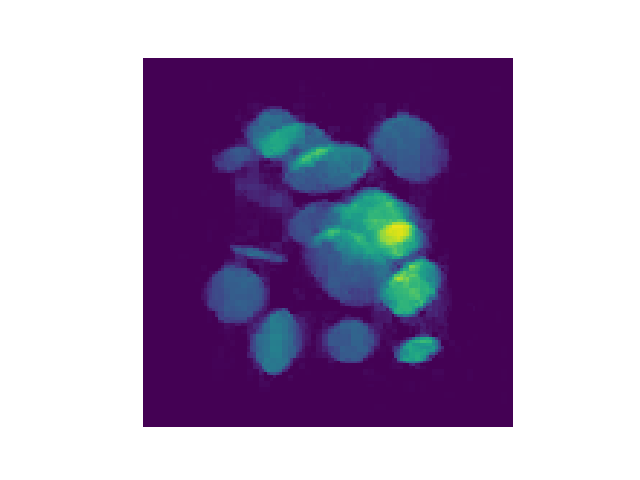}
     \caption{$\mathbf{u}_{\text{$\Psi$do-F}}$ \\ RE: 0.25, SSIM: 0.76}
     \label{fig:im10004ourprediction_flbcm3}
  \end{subfigure}
  \begin{subfigure}[b]{0.3\textwidth}
     \centering \includegraphics[width=\textwidth,trim={3cm 1cm 3cm 1cm},clip]{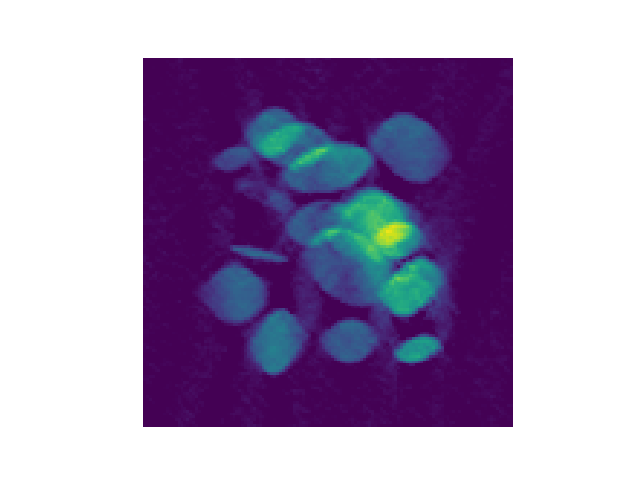}
     \caption{$\mathbf{u}_{\text{$\Psi$do-F}}^+$ \\ RE: 0.29, SSIM: 0.53 }
     \label{fig:im10004ourprediction_flbcm3exp}
  \end{subfigure}
  
  \begin{subfigure}[b]{0.3\textwidth}
     \centering \includegraphics[width=\textwidth,trim={3cm 1cm 3cm 1cm},clip]{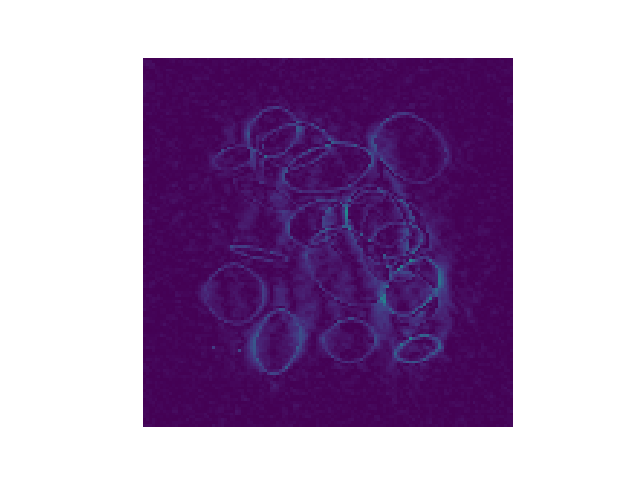}
     \caption{ $|\mathbf{u}^\dagger-\mathbf{u}_{\text{$\Psi$do-O}}^+|$  }
     \label{fig:im10004ourprediction_flacm1exp_err}
  \end{subfigure}
  \begin{subfigure}[b]{0.3\textwidth}
     \centering \includegraphics[width=\textwidth,trim={3cm 1cm 3cm 1cm},clip]{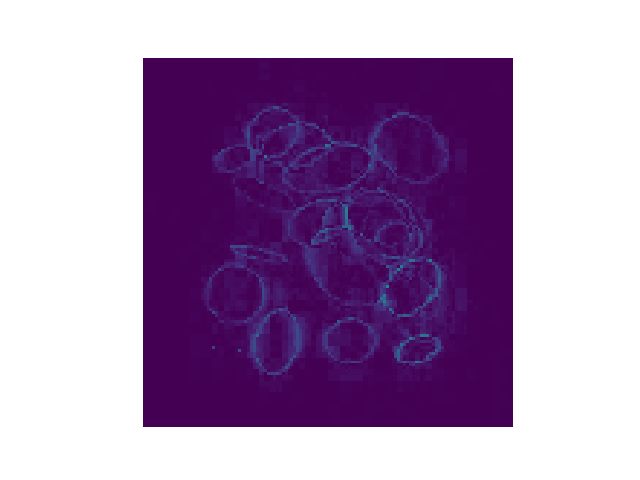}
     \caption{ $|\mathbf{u}^\dagger-\mathbf{u}_{\text{$\Psi$do-F}}|$  }
     \label{fig:im10004ourprediction_flbcm3_err}
  \end{subfigure}
  \begin{subfigure}[b]{0.3\textwidth}
     \centering \includegraphics[width=\textwidth,trim={3cm 1cm 3cm 1cm},clip]{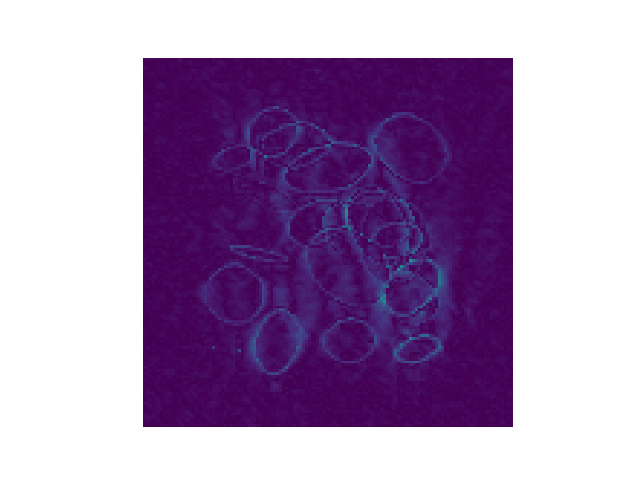}
     \caption{$|\mathbf{u}^\dagger-\mathbf{u}_{\text{$\Psi$do-F}}^+|$ }
     \label{fig:im10004ourprediction_flbcm3exp_err}
  \end{subfigure} 
  \caption{Visualization of the results for one test image.}
  \label{fig:compar_methods_10004}
\end{figure}
\begin{table}[!t]
\centering
 \begin{tabular}{c |c |c |c|c} 
 \hline\hline
 Method & RE & PSNR & SSIM & HaarPSI\\ [1ex] 
 \hline\hline
 $\mathbf{u}_{\text{ista}}$ & 0.44 & 22.84 & 0.36 & 0.37 \\ 
 $\mathbf{u}_{\text{FBP}}$ & 0.64 & 19.49  & 0.20 & 0.30  \\
 \hdashline
 $\mathbf{u}_{\text{$\Psi$do-F}}^+$ & 0.29 & 26.63 & 0.59 & 0.47  \\
 $\mathbf{u}_{\text{$\Psi$do-F}}$ & 0.25 & 27.63 & 0.78 & 0.54  \\
 $\mathbf{u}_{\text{$\Psi$do-O}}^+$ &  0.28 & 26.76 & 0.60 & 0.48  \\ 
 $\mathbf{u}_{\text{$\Psi$do-O}}$ & \textbf{0.23} & \textbf{28.43} & \textbf{0.81} &\textbf{0.58 } \\ 
 \hline\hline
\end{tabular}
\caption{Comparison of reconstruction methods. The similarity values are averaged over
the 500 images of the test set.}
\label{tab:average_im_quality}
\end{table}
With our two models, $\Psi$DONet-F and $\Psi$DONet-O, whether with positivity constraint on the soft-thresholding parameter or without, it is possible to substantially reduce those artifacts and contrast issues. As it can be seen in \cref{fig:compar_methods_10016} and \cref{fig:compar_methods_10004}, our proposed methods lead to undeniably enhanced reconstructions, with a meaningful diminution of the relative error. In particular, $\Psi$DONet-O provides slightly better similarity values than $\Psi$DONet-F, although both implementations produces comparable results.

In the case where the positivity of the soft-thresholding parameter is enforced, that is for $\mathbf{u}_{\text{$\Psi$do-F}}^+$ and  $\mathbf{u}_{\text{$\Psi$do-O}}^+$, one can notice that the streaking artifacts, although greatly lessened when compared with the ISTA images, are still present on the reconstructions (cf \cref{fig:im10016ourprediction_flacm1exp}, \cref{fig:im10016ourprediction_flbcm3exp}, \cref{fig:im10004ourprediction_flacm1exp} and
\cref{fig:im10004ourprediction_flbcm3exp}). In fact, the SSIM measures are greater than in the ISTA case, but still clearly below the SSIM values obtained with the non-constrained version of the two models. 
The latter ($\mathbf{u}_{\text{bowtie}}$ and  $\mathbf{u}_{\text{$\Psi$do}}$)  do a noteworthy job in removing the artifacts and sharpening the edges (cf \cref{fig:im10016ourprediction_flacm1}, \cref{fig:im10016ourprediction_flbcm3}, \cref{fig:im10004ourprediction_flacm1} and
\cref{fig:im10004ourprediction_flbcm3}). 

Overall, $\Psi$DONet-O without any constraint on the soft-thresholding parameters offers the best results among the compared methods and allows for an optimized implementation of $\Psi$DONet.

\section{Conclusions}
\label{sec:conclusions}
In the present paper, we introduced a novel CNN, named $\Psi$DONet, inspired by the well-known ISTA and the convolutional nature of certain FIOs and $\Psi$DOs, like the limited angle Radon transform. We proved that the unrolled iterations of ISTA can be interpreted as layers of a CNN, where the downsampling, upsampling and convolution operations, typically defining a CNN, can be exactly specified by combining the convolutional nature of the limited angle Radon transform and basic properties defining an orthogonal wavelet system. In addition, we proved that, for a specific choice of the parameters involved, $\Psi$DONet recovers standard ISTA or a perturbation of ISTA, up to a bound on the filters coefficients which we estimated in the case of limited angle Radon transform. 

The key feature of the proposed architecture is its potential to learn $\Psi$DO-like structures, which makes it suitable to be extended to any convolutional operator which is a FIO  or  $\Psi$DO. Moreover, the analysis carried out in paper allows to gain understanding and interpretability of the results, which gives insight into a whole class of inverse problems arising from FIO or $\Psi$DO and opens up for fundamental theoretical generalization results.  

As a proof of concept, we tested two different implementations of $\Psi$DONet on simulated data from limited angle geometry, generated from the ellipse data set. The improvement, compared to standard ISTA (and classical FBP) is notable and it is promising for further numerical testing which we leave to future work. Additional directions for future numerical testing include larger sizes for images, smaller and sparser visible wedges, additional regularization for the loss function and reconstructions from real data.


\appendix
\section{Proof of \cref{prop:conv2}} 
\label{Appendix1}
\begin{proof}
 According to \cite[Section 3]{flemming2018injectivity}, 
 the following variational source condition is satisfied by every $w \in \ell^1(\N)$:
\begin{equation}
	\beta \| w - w^\dag \|_{\ell^1} \leq \| w \|_{\ell^1} - \| w^\dag \|_{\ell^1} + C \|A W^* w- A W^* w^\dag \|_{Y}.
\label{eq:vsc}
\end{equation}
We aim at applying it to $w = w_{\delta,\vN} \in W_p \subset \ell^1(\N)$. \par
First consider the term $\| w \|_{\ell^1} - \| w^\dag \|_{\ell^1}$ in the right-hand side. 
Since $w_{\delta,\vN}$ is a solution of \eqref{eq:minN},
\[
\begin{aligned}
    \lambda \|w_{\delta,\vN} \|_{\ell^1} =  \left( \| A_\vN W^* w_{\delta,\vN} - \mathbb{P}_q m\|_{Y}^2+\lambda \|w_{\delta,\vN}\|_{\ell^1} \right)  - \| A_\vN W^* w_{\delta,\vN}- \mathbb{P}_q m\|_{Y}^2 \\
    \leq \| A_\vN W^* \mathbb{P}_p w^\dag - \mathbb{P}_q m\|_{Y}^2+\lambda \|\mathbb{P}_p w^\dag \|_{\ell^1}  - \| A_\vN W^* w_{\delta,\vN} - \mathbb{P}_q m\|_{Y}^2,
\end{aligned}
\]
whence
\[
    \|w_{\delta,\vN} \|_{\ell^1} - \|w^\dag \|_{\ell^1} \leq  \frac{1}{\lambda} \| A_\vN W^* \mathbb{P}_p w^\dag - \mathbb{P}_q m \|_{Y}^2 - \frac{1}{\lambda}  \| A_\vN W^* w_{\delta,\vN}- \mathbb{P}_q m\|_{Y}^2.
\]
We can easily check that $A_\vN W^* \mathbb{P}_p = \mathbb{P}_q A W^* \mathbb{P}_p$; then, since $\| \mathbb{P}_q \|_{Y\rightarrow Y}\leq 1$, denoting by $Q = \| A_\vN W^* w_{\delta,\vN}- \mathbb{P}_q m\|_{Y}$, we have
\[
\begin{aligned}
    \|w_{\delta,\vN} \|_{\ell^1} - \|w^\dag \|_{\ell^1} &\leq  \frac{1}{\lambda} \| A W^* \mathbb{P}_p w^\dag - m \|_{Y}^2 - \frac{1}{\lambda}  Q ^2 \\
    &\leq \frac{1}{\lambda} \| A W^* (\mathbb{P}_p w^\dag - w^\dag)\|_{Y}^2 + \frac{1}{\lambda} \| A W^* w^\dag - m\|_{Y}^2 - \frac{1}{\lambda}  Q^2.
\end{aligned}
\]
In conclusion,
\begin{equation}
\|w_{\delta,\vN} \|_{\ell^1} - \|w^\dag \|_{\ell^1} \leq \frac{1}{\lambda} \| A \|^2 \|w^\dag - \mathbb{P}_p w^\dag \|^2_{\ell^2} + \frac{1}{\lambda} \delta^2 - \frac{1}{\lambda} Q^2.
    \label{eq:aux1}
\end{equation}
The second term in the right-hand side of \eqref{eq:vsc}, instead, can be bounded as follows:
\begin{equation}
\begin{aligned}
    &\|A W^* (w_{\delta,\vN} - w^\dag) \|_{Y}  \\
    & \qquad = \|\mathcal{P}_q A W^* (w_{\delta,\vN} - w^\dag) \|_{Y} + \|(I-\mathcal{P}_q) A W^* (w_{\delta,\vN} - w^\dag) \|_{Y} \\
    & \qquad \leq \|A_\vN W^* w_{\delta,\vN} - \mathbb{P}_q m \|_{Y} + \delta  + \| (I-\mathbb{P}_q) A \|_{X \rightarrow Y} \| w_{\delta,\vN} - w^\dag  \|_{\ell^1} + \delta \\
    & \qquad \leq Q + M \| (I-\mathbb{P}_q)A \|_{X \rightarrow Y} +  \delta,
\end{aligned}
    \label{eq:aux2}
\end{equation}
where the positive constant $M$ depends on $\| w^\dag\|_{\ell^2}$. In order to get an estimate for $Q = \| A_\vN W^* w_{\delta,\vN}- \mathbb{P}_q m\|_{Y}$, we use \eqref{eq:vsc}: since $0 \leq \beta \| w_{\delta,\vN} - w^\dag \|_{\ell^1}$, using \eqref{eq:aux1} and \eqref{eq:aux2} we have
\[
0 \leq \frac{1}{\lambda} \| A \| \| w^\dag - \mathbb{P}_p w^\dag\|_{\ell^2}^2 + \frac{1}{\lambda}\delta^2 - \frac{1}{\lambda} Q^2 + Q + M \| (I- \mathbb{P}_q)A \|_{X\rightarrow Y} + \delta.
\]
By solving this second-order inequality we get
\begin{equation}
\begin{aligned}
    Q & \leq \frac{\lambda}{2} + \frac{\lambda}{2}\left( 1+ \frac{4}{\lambda^2} \| A\|^2 \| w^\dag - \mathbb{P}_p w^\dag\|_{\ell^2}^2 \frac{4}{\lambda}\delta^2 + \frac{4M}{\lambda} \| (I- \mathbb{P}_q)A\|_{X\rightarrow Y} \frac{4}{\lambda}\delta \right)^{\frac{1}{2}} \\
    &\leq \lambda + \delta + \|A\| \| w^\dag - \mathbb{P}_p w^\dag\|_{\ell^2} + M \| (I- \mathbb{P}_q)A \|_{X\rightarrow Y} 
\end{aligned}
    \label{eq:q}
\end{equation}
Combining \eqref{eq:vsc}, \eqref{eq:aux1}, \eqref{eq:aux2}, and \eqref{eq:q} we easily conclude the proof.
\end{proof}

\section{Proof of \cref{prop:conv3}}
\label{Appendix2}
\begin{proof}
Consider the sequence $e_n = \| \wt{n+1} - w^{(n+1)} \|_{\ell^2}$. Thanks to the nonexpansivity of the operator $S_{\lambda/L}$, it holds that 
\begin{equation} \label{eq:e0}
\begin{aligned}
e_0 &= \| \wt{1} - w^{(1)} \|_{\ell^2} = \| \mathcal{T_Z}(w^{(0)}) - \mathcal{T}(w^{(0)}) \|_{\ell^2} \\
& \leq \frac{1}{L} \| W A_\vN^* A_\vN W^* - Z \| \| w^{(0)} \|_{\ell^2} \leq \frac{1}{L} \rho \| w^{(0)} \|_{\ell^2}.
\end{aligned}
\end{equation}
Analogously, for $n \geq 1$,
\begin{equation} \label{eq:el}
\begin{aligned}
e_n & = \| \mathcal{T}(w^{(n)}) - \mathcal{T_Z}(\wt{n}) \|_{\ell^2} \\
& \leq  \| I - \frac{1}{L} Z \| \| w^{(n)} - \wt{n} \|_{\ell^2} + \frac{1}{L} \| W A_\vN^* A_\vN W^* - Z \| \| w^{(n)} \|_{\ell^2} \\
& \leq \| I - \frac{1}{L} Z \| e_{n-1} + \frac{1}{L} \rho \| w^{(n)} \|_{\ell^2}.
\end{aligned}
\end{equation}
Since $L \geq \| W A_\vN^* A_\vN W^*  \|$, then
\[
 \| I - \frac{1}{L} Z \| \leq \| I - \frac{1}{L} W A_\vN^* A_\vN W^*  \| + \frac{1}{L} \| W A_\vN^* A_\vN W^*  - Z\| \leq 1 + \frac{1}{L} \rho.
\]
Moreover, since the sequence $\{w^{(n)}\}$ is convergent, then it is also bounded: let, \textit{e.g.}, $\| w^{(n)}\|_{\ell^2} \leq M$. As a consequence of \eqref{eq:e0},\eqref{eq:el},
\[
e_N \leq \sum_{n=0}^N \left( 1+\frac{1}{L}\rho \right)^{N-n} \frac{1}{L}\rho \| w^{(n)}\|_{\ell^2} 
\leq  M \left( \big(1+\frac{1}{L}\rho \big)^{N+1}-1 \right) 
\]
Let now $N \geq N_0$ and $\rho N \leq \eta_0$: then, with a constant $c = c(N_0,\eta_0)$, it holds:
\[
\| \wt{N} - w^{(N)} \|_{\ell^2} = e_N \leq M(e^{\frac{1}{L} \rho N} - 1) \leq  c \frac{M}{L} \rho N. 
\]
Combining this result with \eqref{eq:BL}, we can guarantee that
\[
\| \wt{N} - w_{\delta,\vN} \|_{\ell^2} \leq \| \wt{N} - w^{(N)} \|_{\ell^2} + \| w^{(N)} - w_{\delta,\vN} \|_{\ell^2} \leq c_3 a^N + \tilde{c}_4 \rho N,
\]
which proves \eqref{eq:conv4Old}. To obtain \eqref{eq:conv4}, simply substitute $N = \log_a\delta$ and $\rho = \frac{\delta}{N}$ and consider $c_4 = c_3 + \tilde{c}_4$.
\end{proof}

\bibliographystyle{siam}
\bibliography{references}

\begin{thebibliography}{10}

\bibitem{odl-toolbox}
{\sc J.~Adler, H.~Kohr, and O.~\"{O}ktem}, {\em Operator discretization library
  {(ODL)}}, 2017.

\bibitem{Arridge19}
{\sc S.~Arridge, P.~Maass, O.~\"{O}ktem, and C.~Sch\"{o}nlieb}, {\em {Solving
  inverse problems using data-driven models}}, Acta Numer., 28 (2019),
  pp.~1--174.

\bibitem{beylkin1991fast}
{\sc G.~Beylkin, R.~Coifman, and V.~Rokhlin}, {\em Fast wavelet transforms and
  numerical algorithms i}, Commun.~Pure~Appl.~Math., 44 (1991), pp.~141--183.

\bibitem{BNPS}
{\sc J.~Bolte, T.~Nguyen, J.~Peypouquet, and B.~Suter}, {\em From error bounds
  to the complexity of first-order descent methods for convex functions}, Math.
  Program., 165 (2017), pp.~471--507.

\bibitem{Borg2018}
{\sc L.~Borg, J.~Frikel, J.~Jorgensen, and E.~Quinto}, {\em {Analyzing
  Reconstruction Artifacts from Arbitrary Incomplete X-ray CT Data}},
  {SIAM}~J.~Imaging~Sci., 11 (2018), pp.~2786--2814.

\bibitem{BL}
{\sc K.~Bredies and D.~Lorenz}, {\em Linear convergence of iterative
  soft-thresholding}, J. Fourier Anal.~Appl., 14 (2008), pp.~813--837.

\bibitem{Bubba19}
{\sc T.~A. Bubba, G.~Kutyniok, M.~Lassas, et~al.}, {\em {Learning the
  invisible: a hybrid deep learning-shearlet framework for limited angle
  computed tomography}}, Inverse Probl., 35 (2019), p.~064002.

\bibitem{dahmen1997wavelet}
{\sc W.~Dahmen}, {\em Wavelet and multiscale methods for operator equations},
  Acta Numer., 6 (1997), pp.~55--228.

\bibitem{dahmen1994wavelet}
{\sc W.~Dahmen, S.~Pr{\"o}ssdorf, and R.~Schneider}, {\em Wavelet approximation
  methods for pseudodifferential equations: I stability and convergence},
  Math.~Z., 215 (1994), pp.~583--620.

\bibitem{D}
{\sc I.~Daubechies, M.~Defrise, and C.~De~Mol}, {\em An iterative thresholding
  algorithm for linear inverse problems with a sparsity constraint},
  Commun.~Pure~Appl.~Math., 57 (2004), pp.~1413--1457.

\bibitem{Davison83}
{\sc M.~E. Davison}, {\em {The Ill-Conditioned Nature of the Limited Angle
  Tomography Problem}}, SIAM J. Appl. Math., 43 (1983), pp.~428--448.

\bibitem{de2019deep}
{\sc M.~de~Hoop, M.~Lassas, and C.~Wong}, {\em Deep learning architectures for
  nonlinear operator functions and nonlinear inverse problems},
  arXiv:1912.11090,  (2019).

\bibitem{Fanelli08}
{\sc D.~Fanelli and O.~\"{O}ktem}, {\em {Electron tomography: a short overview
  with an emphasis on the absorption potential model for the forward problem}},
  Inverse Probl., 24 (2008), p.~013001.

\bibitem{flemming2018injectivity}
{\sc J.~Flemming and D.~Gerth}, {\em Injectivity and weak*-to-weak continuity
  suffice for convergence rates in $\ell_1$-regularization}, J.~Inverse
  Ill-Pose.~P., 26 (2018), pp.~85--94.

\bibitem{Frikel13a}
{\sc J.~Frikel and E.~Quinto}, {\em {Characterization and reduction of
  artifacts in limited angle tomography}}, Inverse Probl., 29 (2013),
  p.~125007.

\bibitem{Goodfellow-et-al-2016}
{\sc I.~Goodfellow, Y.~Bengio, and A.~Courville}, {\em Deep Learning}, MIT
  Press, 2016.

\bibitem{Gregor}
{\sc K.~Gregor and Y.~LeCun}, {\em Learning fast approximations of sparse
  coding}, in Proc.~27th International Conference on Machine Learning, 2010,
  pp.~399--406.

\bibitem{tf-wavelets}
{\sc K.~Haug and M.~Lohne}, {\em {TF-W}avelets}, 2019.
\newblock Link: https://github.com/UiO-CS/tf-wavelets.

\bibitem{heiskanen1991}
{\sc K.~Heiskanen, H.~Rhim, and P.~Monteiro}, {\em Computer simulations of
  limited angle tomography of reinforced concrete}, Cement Concrete Res., 21
  (1991), pp.~625--634.

\bibitem{Hormander85}
{\sc L.~H\"{o}rmander}, {\em The analysis of linear partial differential
  operators, vols. I-IV}, Springer-Verlag, 1983-1985.

\bibitem{Jin17}
{\sc K.~Jin, M.~McCann, E.~Froustey, and M.~Unser}, {\em Deep convolutional
  neural network for inverse problems in imaging}, {IEEE} {T}.~{I}mage
  {P}rocess, 26 (2017), pp.~4509--4522.

\bibitem{adam_2015}
{\sc D.~Kingma and J.~Ba}, {\em Adam: {A} method for stochastic optimization},
  CoRR,  (2015).

\bibitem{Kolehmainen03}
{\sc V.~Kolehmainen, S.~Siltanen, et~al.}, {\em {Statistical inversion for
  medical x-ray tomography with few radiographs: II. Application to dental
  radiology}}, Phys. Med. Biol., 48 (2003), p.~1465.

\bibitem{Krishnan2015}
{\sc V.~Krishnan and E.~Quinto}, {\em Microlocal Analysis in Tomography},
  Springer, 2015, pp.~847--902.

\bibitem{pywt2019}
{\sc G.~Lee, R.~Gommers, F.~Waselewski, et~al.}, {\em {PyWavelets: A P}ython
  package for wavelet analysis}, J.~Open Source Softw., 4 (2019), p.~1237.

\bibitem{mallat1999wavelet}
{\sc S.~Mallat}, {\em A wavelet tour of signal processing}, Elsevier, 1999.

\bibitem{siltanen_mller2012}
{\sc J.~Mueller and S.~Siltanen}, {\em Linear and Nonlinear Inverse Problems
  with Practical Applications}, {SIAM}, 2012.

\bibitem{natterer2001mathematics}
{\sc F.~Natterer}, {\em The mathematics of computerized tomography}, SIAM,
  2001.

\bibitem{natterer2001}
{\sc F.~Natterer and F.~W{\"u}bbeling}, {\em Mathematical methods in image
  reconstruction}, {SIAM}, 2001.

\bibitem{Oliver89}
{\sc C.~Oliver}, {\em Synthetic-aperture radar imaging}, J.~Phys.~D
  Appl.~Phys., 22 (1989), pp.~871--890.

\bibitem{Quinto93}
{\sc E.~Quinto}, {\em {Singularities of the X-Ray transform and limited data
  tomography in $\mathbb{R}^2 $ and $\mathbb{R}^3$}}, SIAM J.~Math.~Anal., 24
  (1993), pp.~1215--1225.

\bibitem{Quinto06}
{\sc E.~Quinto}, {\em An introduction to {X}-ray tomography and {R}adon
  transforms}, Proc.~Sympos.~Appl.~Math., 63 (2006), pp.~1--23.

\bibitem{haar_psi2018}
{\sc R.~Reisenhofer, S.~Bosse, G.~Kutyniok, et~al.}, {\em A {H}aar
  wavelet-based perceptual similarity index for image quality assessment},
  Signal Process.~Image Commun., 61 (2018), pp.~33--43.

\bibitem{rumelhart1986}
{\sc D.~Rumelhart, G.~Hinton, and R.~Williams}, {\em Learning representations
  by back-propagating errors}, vol.~323, Nature, 1986.

\bibitem{Tovey19}
{\sc R.~Tovey, M.~Benning, C.~Brune, et~al.}, {\em {Directional sinogram
  inpainting for limited angle tomography}}, Inverse Probl., 35 (2019),
  p.~024004.

\bibitem{Uhlmann16}
{\sc G.~Uhlmann and A.~Vasy}, {\em The inverse problem for the local geodesic
  ray transform}, Invent. Math., 205 (2016), pp.~83--120.

\bibitem{astra2016}
{\sc W.~van Aarle, W.~J. Palenstijn, J.~Cant, et~al.}, {\em Fast and flexible
  {X}-ray tomography using the {ASTRA} toolbox}, Opt. Express, 24 (2016),
  pp.~25129--25147.

\bibitem{scikit-image}
{\sc S.~van~der Walt, J.~L. Sch\"{o}nberger, J.~Nunez-Iglesias, et~al.}, {\em
  scikit-image: Image processing in {P}ython}, 2014.

\bibitem{wang2004}
{\sc Z.~Wang, A.~{Bovik}, H.~{Sheikh}, et~al.}, {\em Image quality assessment:
  from error visibility to structural similarity}, {IEEE} T.~Image Process, 13
  (2004), pp.~600--612.

\bibitem{Zhang18}
{\sc J.~{Zhang} and B.~{Ghanem}}, {\em {ISTA-N}et: Interpretable
  optimization-inspired deep network for image compressive sensing}, in
  Proc.~{IEEE} Conf.~Comput.~Vis.~Pattern Recognit., 2018, pp.~1828--1837.

\bibitem{Zhang06}
{\sc Y.~Zhang, H.~P. Chan, B.~Sahiner, et~al.}, {\em {A comparative study of
  limited--angle cone-beam reconstruction methods for breast tomosynthesis}},
  Med. Phys., 33 (2006), pp.~3781--3795.

\end{thebibliography}
\end{document}